\documentclass[11pt,reqno]{amsart}

\usepackage[T1]{fontenc}
\usepackage{lmodern}
\usepackage{amsmath,amssymb,mathtools}
\usepackage[expansion=false]{microtype}
\usepackage{xcolor}
\usepackage{tikz}
\usepackage[margin=30mm]{geometry}
\usepackage[colorlinks=true,linkcolor=blue!55!black,
  citecolor=blue!55!black,urlcolor=blue!55!black]{hyperref}
\hypersetup{
  pdftitle={Failure of the H1 x H1 -> L(1/2,infinity) estimate for the lacunary bilinear spherical maximal function},
  pdfauthor={Sanghyuk Lee, Ji Li, Chaojie Wen, Liangchuan Wu},
  pdfkeywords={bilinear spherical averages, lacunary maximal functions, H1(Rd), endpoint estimates}
}

\usepackage{esint}

\allowdisplaybreaks

\newtheorem{theorem}{Theorem}[section]
\newtheorem{proposition}[theorem]{Proposition}
\newtheorem{lemma}[theorem]{Lemma}
\newtheorem{corollary}[theorem]{Corollary}
\newtheorem{definition}[theorem]{Definition}
\theoremstyle{remark}
\newtheorem{remark}[theorem]{Remark}
\numberwithin{equation}{section}

\newcommand{\R}{\mathbb R}

\newcommand{\A}{\mathcal A}
\newcommand{\B}{\mathcal B}
\newcommand{\Av}{\operatorname{Av}}
\newcommand{\one}{\mathbf 1}
\newcommand{\supp}{\operatorname{supp}}

\title[Failure of the $H^1\times H^1$ endpoint]
{Failure of the $H^1\times H^1\longrightarrow L^{1/2,\infty}$ estimate for the
lacunary bilinear spherical maximal function and applications}

\author{Sanghyuk Lee, Ji Li, Chaojie Wen, Liangchuan Wu}

\makeatletter
\@ifundefined{subjclassname@2020}{
  \@namedef{subjclassname@2020}{\textup{2020} Mathematics Subject Classification}
}{}
\makeatother
\subjclass[2020]{Primary 42B25; Secondary 42B30}
\keywords{Bilinear spherical averages, lacunary maximal functions,
the Hardy space $H^1(\mathbb R^d)$, endpoint estimates}

\begin{document}

\begin{abstract}
In contrast to Christ's theorem that the lacunary spherical maximal
operator is bounded from $H^1(\mathbb R^d)$ to $L^{1,\infty}(\mathbb R^d)$ ($d\geq2$),
we show that the lacunary bilinear spherical maximal operator
$$
 \mathcal B_{\mathrm{lac}}(f,g)(x)
 =\sup_{\ell\in\mathbb Z}
 \Bigl|\int_{\mathbb S^{2d-1}}f(x-2^\ell y)\,g(x-2^\ell z)\,d\sigma(y,z)\Bigr|
$$
is not bounded from $H^1(\mathbb R^d)\times H^1(\mathbb R^d)$ to
$L^{1/2,\infty}(\mathbb R^d)$ for any $d\geq1$. In particular, it is
not bounded from $L^1(\mathbb R^d)\times L^1(\mathbb R^d)$ to $L^{1/2,\infty}(\mathbb R^d)$.
More precisely, for every sufficiently large integer $N$,
we construct two finite linear combinations of smooth, compactly
supported, mean-zero $H^1(\mathbb R^d)$-atoms for which the ratio of
the weak $L^{1/2}$ quasinorm to the product of the two Hardy norms is
at least $c_d\log(N+1)$. This lower bound holds already for the
supremum over a suitable set of $N$ dyadic radii.
For $d=1$ this answers the case $p=q=1$ of a question in Borges's
survey.
For every $k\ge3$ and $d\ge1$, a modification of the construction proves
that the lacunary $k$-linear spherical maximal operator does
not map $(H^1(\R^d))^k$ into $L^{1/k,\infty}(\R^d)$. We also indicate extensions to certain hypersurfaces and to
horizontal spherical averages on the Heisenberg group.
\end{abstract}

\maketitle

\section{Introduction and statement of the result}

Spherical averages arise in the study of the wave and Darboux
equations.
Stein \cite{Stein1976} initiated the study of the full spherical
maximal function and proved its sharp $L^p$ bounds in dimensions
$d\geq3$.
Bourgain \cite{Bourgain1986} established the corresponding bounds
in dimension two.

Along with the full spherical maximal function, the lacunary
spherical maximal function, in which the supremum is taken only over
the dyadic radii $2^\ell$, $\ell\in\mathbb Z$, has traditionally been
an important object of study. Calder\'on \cite{Calderon1979} and
Coifman and Weiss \cite{CoifmanWeiss1978} proved that it is bounded on
$L^p(\mathbb R^d)$ for all $1<p\leq\infty$ when $d\geq2$, a range
larger than that of the full maximal function.
At the endpoint $p=1$, it remains open for $d\geq2$ whether the
linear lacunary spherical maximal operator is of weak type $(1,1)$,
see \cite{Christ1988,SeegerWright2011}.
Christ \cite{Christ1988} proved that this operator maps
$H^1(\mathbb R^d)$ into $L^{1,\infty}(\mathbb R^d)$.
Seeger, Tao and Wright \cite{STW2003} obtained an
$L\log\log L$ endpoint estimate for lacunary spherical means.
Seeger and Wright \cite{SeegerWright2011} proved
$H^1\to L^{1,\infty}$ bounds for more general lacunary maximal
operators under geometric and smoothing assumptions. Related work includes Fourier-decay criteria for maximal operators
\cite{RD}, improved spherical endpoint estimates \cite{CB},
$\delta$-discretised and discrete spherical averages \cite{CLLS,LLLS}, and counterexamples showing that polynomial Fourier decay
alone does not imply the linear Hardy endpoint
\cite{CenHongWu,XiaojingZhou}.
Our result shows that the bilinear Hardy endpoint fails
even for the standard spherical surface measure, for which
the corresponding linear Hardy estimate holds for $d\geq2$.

Let $d\geq1$, and let $d\sigma$ be normalized surface measure on
$\mathbb S^{2d-1}\subset\mathbb R^d\times\mathbb R^d$.  For
$R>0$, define the bilinear spherical average
\begin{equation*}
 \A_R(f,g)(x)
 :=\int_{\mathbb S^{2d-1}}f(x-Ry)g(x-Rz)\,d\sigma(y,z),\qquad x\in \mathbb R^d
\end{equation*}
and define the lacunary bilinear
spherical maximal operator by
\begin{equation*}
 \B_{\mathrm{lac}}(f,g)(x)
 :=\sup_{\ell\in\mathbb Z}|\A_{2^\ell}(f,g)(x)|.
\end{equation*}
The full bilinear spherical maximal operator was first studied by
Geba, Greenleaf, Iosevich, Palsson and Sawyer \cite{GGIPS}. Further
bounds were obtained in
\cite{BarrionuevoEtAl,GrafakosHeHonzik,HeoHongYang}.
Jeong and Lee \cite{JeongLee} obtained Lebesgue-space bounds in the
optimal open range when $d\geq2$, using a slicing formula that
expresses bilinear spherical averages in terms of linear spherical
averages on lower-dimensional spheres.
Dosidis \cite{Dosidis} treated the full multilinear operator, and
Choudhary, Shen and Shrivastava \cite{ChoudharyShenShrivastava}
studied fractal sets of dilations.

For the lacunary operator, Borges and Foster \cite{BorgesFoster}
proved $\B_{\mathrm{lac}}: L^p\times L^q\to L^r$ in the open H\"older
range $1<p,q\leq\infty$, $1/r=1/p+1/q$. For $d=1$, see Christ and
Zhou \cite{ChristZhou}. Since $\B_{\mathrm{lac}}$ commutes with dyadic
dilations, the relation $1/r=1/p+1/q$ is necessary, and $p=q=1$ is
the endpoint of this range. There the strong-type estimate fails
(\cite{Vergara} for $d\geq2$, \cite{ChristZhou} for $d=1$), so the
natural endpoint estimate is the weak-type bound
$L^1\times L^1\to L^{1/2,\infty}$, the bilinear analogue of the weak
type $(1,1)$ problem for the linear operator. For further background,
see the survey by Borges \cite{BorgesSurvey}.

The purpose of this paper is to determine whether this weak-type
endpoint estimate holds. More precisely, we address the following
question.

\textbf{Question.} Does the following estimate hold
\begin{equation}\label{eq:Q}
 \|\B_{\mathrm{lac}}(f,g)\|_{L^{1/2,\infty}(\mathbb R^d)}
 \lesssim
 \|f\|_{L^1(\mathbb R^d)}\|g\|_{L^1(\mathbb R^d)}?
\end{equation}
Here the implicit constant is independent of $f$ and $g$, and
$L^{p,\infty}(\mathbb R^d)$, $p>0$, denotes the weak $L^p$ space with
the quasinorm
\begin{equation}\label{eq:weak-quasinorm}
 \|F\|_{L^{p,\infty}(\mathbb R^d)}
 :=\sup_{\lambda>0}
 \lambda\,|\{x\in\mathbb R^d:|F(x)|>\lambda\}|^{1/p}.
\end{equation}
In view of Christ's $H^1\to L^{1,\infty}$ result \cite{Christ1988}, it is natural
to first consider the weaker estimate in which $L^1$ is replaced by
the Hardy space:
\begin{equation}\label{eq:proposed-endpoint}
 \|\B_{\mathrm{lac}}(f,g)\|_{L^{1/2,\infty}(\mathbb R^d)}
 \lesssim
 \|f\|_{H^1(\mathbb R^d)}\|g\|_{H^1(\mathbb R^d)},
\end{equation}
where $H^1(\mathbb R^d)$ is the real Hardy space \cite{FeffermanStein}.

For a set $E\subset\{2^\ell:\ell\in\mathbb Z\}$, let
$\B_E(f,g)=\sup_{R\in E}|\A_R(f,g)|$, so that $\B_E\leq\B_{\mathrm{lac}}$.
We prove that \eqref{eq:proposed-endpoint} fails. In fact, we obtain a
lower bound in terms of the number of radii.
\begin{theorem}\label{thm:main}
Let $d\geq1$. There is a constant $c_d>0$ with the following
property. For every sufficiently large integer $N$, there are a set
$E_N$ of $N$ dyadic radii and real functions
$f_N,g_N\in C_c^\infty(\mathbb R^d)$, each a finite linear
combination of mean-zero $H^1(\mathbb R^d)$-atoms, such that
\begin{equation}\label{eq:main-ratio}
 \frac{\|\B_{E_N}(f_N,g_N)\|_{L^{1/2,\infty}(\mathbb R^d)}}
 {\|f_N\|_{H^1(\mathbb R^d)}\|g_N\|_{H^1(\mathbb R^d)}}
 \geq c_d\log(N+1).
\end{equation}
Consequently, \eqref{eq:proposed-endpoint} is false.
\end{theorem}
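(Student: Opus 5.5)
The plan is to build $f_N,g_N$ from $N$ widely separated dyadic scales, each contributing a piece of $\B_{E_N}(f_N,g_N)$ that lives on its own region and at its own height. We then compare the sum of these contributions with the \emph{additive} behaviour of the $H^1$ norms, which cost only $\sum_j a_j\,b_j$-type quantities. The $\log(N+1)$ should come from the fact that $L^{1/2,\infty}$ is not normable: pieces whose distribution functions interlace add up in a genuinely superlinear way. I do not yet have a construction that makes this work.

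\emph{Step 1: a single scale.} Fix $R=1$ and a small parameter $\delta$. Take $f=\phi_\delta-\phi_\delta(\cdot-\delta e_1)$, a mean-zero atom of $H^1$ norm $\lesssim1$ built from an $L^1$-normalized bump at scale $\delta$; let $g$ be a similar atom centred at a point $a$ with $|a|\sim1$. Using the slicing formula of Jeong and Lee, $\A_1(f,g)(x)$ is an average over $y$ of $f(x-y)$ times a lower-dimensional spherical average of $g$. Near the surface $\Sigma=\{x:|x|^2+|x-a|^2=1\}$ the two singularities meet transversally. Heuristically, in a $\delta$-neighbourhood of $\Sigma$ one gets $|\A_1(f,g)|\gtrsim\delta^{-1}$, whereas the mean-zero structure only buys a factor $O(1)$ here, since the cancellation is at scale $\delta$, comparable to the shell width. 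Hence $\lambda\,|\{|\A_1(f,g)|>\lambda\}|^{2}\gtrsim\delta^{-1}\delta^{2}$, which is small. This shows that one pair of atoms at one radius gives no gain, so the gain must come from stacking.

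\emph{Step 2: stacking scales.} Take radii $R_j=2^{Mj}$, $j=1,\dots,N$, with $M$ large, and set
\[
f_N=\sum_{j} a_j f_j,\qquad g_N=\sum_{j} b_j g_j,
\]
where $f_j,g_j$ are the Step~1 atoms dilated to scale $R_j$ and suitably translated. Then $\|f_N\|_{H^1}\le\sum_j|a_j|$ and $\|g_N\|_{H^1}\le\sum_j|b_j|$. I would choose the geometry so that two properties hold.
\begin{enumerate}
\item[(i)] For each $j$, $\A_{R_j}(f_j,g_j)\gtrsim\lambda_j$ on a set $E_j$ of measure $\mu_j$. The sets $E_j$ should be pairwise disjoint, or at least placed so that their measures add.
\item[(ii)] Every cross term $\A_{R_j}(f_i,g_k)$ with $(i,k)\ne(j,j)$ is smaller than $\lambda_j/10$ on $E_j$.
\end{enumerate}
For (ii), when $i$ or $k$ differs from $j$ by a scale gap $2^{M|i-j|}$, I would use the mean-zero property: Taylor-expanding the small-scale atom against the smooth large-scale average gives decay $2^{-M|i-j|}$.

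\emph{Step 3: choosing the weights.} Choose $a_jb_j$ and the dilation/translation parameters so that all the pairs contribute at a common threshold $\lambda$, with equal measures $\mu$ and $\lambda(N\mu)^2$ large. Balancing against $\bigl(\sum_j|a_j|\bigr)\bigl(\sum_j|b_j|\bigr)$ with harmonic weights $a_j=b_j=j^{-1/2}$, or a dyadic-block variant, should produce the ratio $\sum_j 1/j\big/\bigl(\sum_j j^{-1/2}\bigr)^2$-type quantity, after renormalising the $L^1$ masses appropriately. I expect the correct choice to give exactly $\log(N+1)$.

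\emph{Main obstacle.} The hard step is (i) together with Step~3. One must find an arrangement of the two atoms in which the bilinear geometry concentrates $\A_{R_j}(f_j,g_j)$ on a set whose $L^{1/2,\infty}$ contribution is of order one per unit product of norms, uniformly in $j$, while the mean-zero cancellation does not destroy the main term. My first attempt would place $f_j$ and $g_j$ at distance $\sim R_j$ and exploit the transversal intersection of the two spherical slices, with $\delta_j/R_j$ chosen so that the shell estimate in Step~1 becomes scale-invariant. If this fails, I would instead let $g$ be a single fixed atom shared by all scales, so that only $\|f_N\|_{H^1}$ grows additively. Then $\sup_j$ over the $N$ radii of $\A_{R_j}(f_j,g)$ produces $N$ disjoint annular regions, and the $\log N$ would come from summing their distribution functions.
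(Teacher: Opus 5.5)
Your proposal has a genuine gap, and it is structural: the one-pair-per-radius design of Steps~2--3 cannot beat the product of the norms, whatever weights you choose. Suppose (i) and (ii) hold. Then $\B_{E_N}(f_N,g_N)\geq\lambda_j$ on pairwise disjoint sets $E_j$, where the value is driven by $a_jb_j\,\A_{R_j}(f_j,g_j)$. If $(f_j,g_j)$ is a dilate of one fixed pair, dilation invariance gives $\lambda_j|E_j|^2\lesssim a_jb_j\|\A_{R_j}(f_j,g_j)\|_{L^{1/2,\infty}}=C_{\mathrm{pair}}\,a_jb_j$, with $C_{\mathrm{pair}}$ independent of $j$. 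Your Step~1 computation even gives $C_{\mathrm{pair}}\lesssim\delta$. Hence, for every $\lambda>0$,
\[
 \lambda\Bigl(\sum_{j:\lambda_j>\lambda}|E_j|\Bigr)^2
 \leq\Bigl(\sum_j\lambda_j^{1/2}|E_j|\Bigr)^2
 \lesssim\Bigl(\sum_j\sqrt{a_jb_j}\Bigr)^2
 \leq\Bigl(\sum_ja_j\Bigr)\Bigl(\sum_jb_j\Bigr).
\]
The non-normability of $L^{1/2,\infty}$ that you want to exploit is exactly absorbed by the product of the two $\ell^1$ sums, through Cauchy--Schwarz. So interlacing distribution functions cannot help. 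Your harmonic weights illustrate this: $\sum_j j^{-1}\big/\bigl(\sum_j j^{-1/2}\bigr)^2\approx\log N/(4N)\to0$, not $\log N$.

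The fallback with one fixed atom $g$ avoids the last inequality, but fails for a different reason. Heuristically, an atom of fixed scale $\delta$ used at radius $R_j$ captures at most about a $\delta/R_j$ fraction of $\|f_j\|_{H^1}\|g\|_{H^1}$, as in your Step~1. These fractions are summable over lacunary radii, so the ratio is again $O(1)$.

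What is missing is the paper's mechanism. All atoms live at one tiny scale $s$, and each function is a sum of very many of them. The \emph{same} atoms are reused at all $N$ radii: each atom of $f$ interacts with many atoms of $g$ through off-diagonal pairs, which are not negligible cross terms. The log gain comes from counting these pairs.

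Concretely, in each level-$0$ cube $Q$ of a tree with branching $\mathcal L^d$, the paper places $m_Q=\mathcal L^{(2d-1)(W-Z_Q)}$ atoms of $f$ and $n_Q=\mathcal L^{(2d-1)(W+Z_Q)}$ atoms of $g$. Each family has total mass $\theta R_0^d$, and $Z_Q$ is a weighted sum of random labels of the ancestors of $Q$. A pair $(Q_1,Q_2)$ whose first common ancestor is at level $j$ then has coefficient product exactly $sR_j^{2d-1}$ if and only if $Z_{Q_1}-Z_{Q_2}=j$. This happens for a fraction $c_j\approx1/j$ of such pairs, by a local limit theorem for the weights $\lfloor\sqrt r\rfloor$; the logarithm is $\sum_jc_j$.

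The choice $\phi=-\Delta(\chi*\widetilde\chi)$ has $\widehat\phi\geq0$, which makes $\int\widehat\phi(ty_1)\widehat\phi(ty_2)\,dt>0$. Hence every tuned pair gives $\A_{R_j}\geq c_0$ on a shell piece of measure $\approx sR_j^{d-1}$; this is the rigorous form of your Step~1 heuristic. Counted with multiplicity, these pieces have total measure $\approx\theta^2\sum_jc_j\approx\rho$, where $\theta^2=\rho/\log(N+1)$. A second-moment bound on the counting function shows that their union has measure $\gtrsim\rho$, while $\|f\|_{H^1},\|g\|_{H^1}\lesssim\theta$.

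The cross terms are not shown to be small, as your (ii) requires. Instead they are neutralized by averaging over random signs on the atoms, using second and fourth moments. Together these give the ratio $\gtrsim\log(N+1)$.
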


The radii in $E_N$ form a lacunary sequence whose ratio depends on
$N$, see Subsection~\ref{sec:cube-tree}. We do not know whether
the factor $\log(N+1)$ is sharp.
Since $\|f\|_{L^1(\mathbb R^d)}
\lesssim\|f\|_{H^1(\mathbb R^d)}$,
\eqref{eq:Q} would imply \eqref{eq:proposed-endpoint}.
Thus \eqref{eq:Q} also fails.

For $d=1$, the sphere $\mathbb S^1\subset\mathbb R^2$ is the unit
circle, and
$$
 \A_R(f,g)(x)=\frac1{2\pi}\int_0^{2\pi}
 f(x-R\cos t)g(x-R\sin t)\,dt.
$$
By contrast, the linear spherical average on $\mathbb R$ uses the two
points of $\mathbb S^0$.
Theorem~\ref{thm:main} shows that the weak endpoint fails even when
both functions belong to the real Hardy space.
Since $L^{1,1}(\R)=L^1(\R)$, the failure of \eqref{eq:Q} for $d=1$
gives a negative answer at $p=q=1$ to the estimate asked about in
Question 3.2 (2) of \cite{BorgesSurvey}.

The method also applies to the multilinear spherical maximal
operators. For $k\geq3$, let $d\sigma$ be normalized surface measure on
$\mathbb S^{kd-1}\subset(\mathbb R^d)^k$, and define
\begin{equation*}
 \A_R^{(k)}(f_1,\ldots,f_k)(x)
 :=\int_{\mathbb S^{kd-1}}\prod_{i=1}^k f_i(x-Ry_i)
 \,d\sigma(y_1,\ldots,y_k),\qquad x\in\mathbb R^d,
\end{equation*}
and
\begin{equation*}
 \B_{\mathrm{lac}}^{(k)}(f_1,\ldots,f_k)(x)
 :=\sup_{\ell\in\mathbb Z}|\A_{2^\ell}^{(k)}(f_1,\ldots,f_k)(x)|.
\end{equation*}
For a set $E$ of dyadic radii, let
$\B_E^{(k)}=\sup_{R\in E}|\A_R^{(k)}|$.

\begin{theorem}\label{rem:multilinear-endpoint}
Let $k\geq3$ and $d\geq1$.
There is a constant $c_{d,k}>0$ with the following property.
For every sufficiently large integer $N$, there are a set $E_N$ of
$N$ dyadic radii and real functions
$f_{1,N},\ldots,f_{k,N}\in C_c^\infty(\mathbb R^d)$, each a finite
linear combination of mean-zero $H^1(\mathbb R^d)$-atoms, such that
$$
 \frac{\|\B_{E_N}^{(k)}(f_{1,N},\ldots,f_{k,N})
                  \|_{L^{1/k,\infty}(\mathbb R^d)}}
      {\prod_{i=1}^k\|f_{i,N}\|_{H^1(\mathbb R^d)}}
 \geq c_{d,k}\log(N+1).
$$
Consequently, $\B_{\mathrm{lac}}^{(k)}$ does not map
$(H^1(\mathbb R^d))^k$ into $L^{1/k,\infty}(\mathbb R^d)$.
\end{theorem}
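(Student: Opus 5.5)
The plan is to re-run the proof of Theorem~\ref{thm:main} with $k$ functions instead of two, rather than to reduce to it. The naive reduction (take $f_{1,N}=f_N$, $f_{2,N}=g_N$ and let $f_{3,N},\dots,f_{k,N}$ be fixed bumps) does not work, because each $f_{i,N}$ must be a combination of mean-zero atoms. A positive bump at unit scale, compensated by negative mass at distance or scale $M$, has $H^1$ norm $\gtrsim\log M$. To keep the compensating mass invisible to all $N$ radii we would need $M$ exponential in $N$, and this cancels the gain $\log(N+1)$ exactly. So all $k$ functions have to be built inside the same multiscale cube tree of Subsection~\ref{sec:cube-tree}, with the same lacunary set of radii $E_N$ (ratio depending on $N$).

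\textbf{Step 1: set up the construction.} At each node $Q$ of the tree (side length comparable to some $R\in E_N$) we place, for every $i=1,\dots,k$, a smooth mean-zero atom $a_{i,Q}$ supported in a small neighbourhood of a prescribed point of $Q$. The points and signs are chosen so that for $x$ in a set $G_Q$ there is a configuration $(y_1,\dots,y_k)\in\mathbb S^{kd-1}$, with $x-Ry_i$ in the positive part of $a_{i,Q}$ for all $i$, and this configuration is stable under perturbation. We then set $f_{i,N}=\sum_Q c_Q a_{i,Q}$, normalised exactly as in the bilinear case, so that $\|f_{i,N}\|_{H^1}\lesssim 1$ for each $i$. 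The point is that the atomic decomposition gives each $H^1$ norm directly, with no logarithmic loss from separation.

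\textbf{Step 2: lower bound for the main term.} For $x\in G_Q$ and the matching radius $R$ we show $|\A^{(k)}_R(a_{1,Q},\dots,a_{k,Q})(x)|\gtrsim\prod_i c_Q|Q|^{-1}$, up to the geometric factor coming from the $(kd-1)$-dimensional surface measure. The cleanest route is the slicing formula of Jeong--Lee. After a rotation, $\A^{(k)}_R$ is an integral over $(y_1,y_2)$ in the unit ball $\{|y_1|^2+|y_2|^2<1\}$ of the product $f_1(x-Ry_1)f_2(x-Ry_2)$ against a $(k-2)$-linear spherical average of $f_3,\dots,f_k$ at radius $R(1-|y_1|^2-|y_2|^2)^{1/2}$, with a smooth positive density. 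Iterating this reduces the lower bound to a transversality count of the same type as for $k=2$. The $k=2$ count is already in the paper, so only the extra Jacobian factors need tracking.

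\textbf{Step 3: summing and the weak-type count.} Using the disjointness and separation of the sets $G_Q$ across the $N$ levels, we obtain $\B^{(k)}_{E_N}\gtrsim\lambda$ on a set of measure $\gtrsim(\log(N+1))^{k}\lambda^{-1/k}$ for a suitable $\lambda$. This gives $\|\B^{(k)}_{E_N}(f_{1,N},\dots,f_{k,N})\|_{L^{1/k,\infty}}\gtrsim\log(N+1)$, with the exponent $1/k$ playing the role that $1/2$ plays in the bilinear case.

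\textbf{Main obstacle.} The hardest step is controlling the off-diagonal terms $\A^{(k)}_R(a_{1,Q_1},\dots,a_{k,Q_k})$ with not all $Q_i$ equal or with $R$ not matched to the $Q_i$. These cross terms must be shown to be smaller than the main term on $G_Q$. Mean zero of the atoms at the finer scale, combined with smoothness of the sliced spherical average at the coarser scale, gives decay in the scale ratio. Since that ratio grows with $N$, the sum of cross terms over the tree should be dominated by the main term, as in the bilinear argument. The new difficulty for $k\geq3$ is that the sliced lower-dimensional sphere degenerates when $|y_1|^2+|y_2|^2\to1$. My first attempt would be to localise the positive parts of the atoms away from that boundary region and estimate the boundary contribution crudely, using that the density vanishes there to order $((k-2)d-2)/2$. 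The case $(k-2)d=1$, where this exponent is negative, would be treated directly on the circle.
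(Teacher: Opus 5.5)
Your top-level plan matches the paper: build all $k$ functions inside one cube tree with one lacunary set $E_N$, rather than reducing to Theorem~\ref{thm:main}. But the paper settles the step you call the main obstacle with an idea your proposal lacks, and the route you suggest for it would fail. Terms at radii other than $R_{j(x)}$ need no control, because a lower bound for $\B^{(k)}_{E_N}$ at $x$ only uses $|\A^{(k)}_{R_{j(x)}}(f_{1,N},\ldots,f_{k,N})(x)|$. The terms that matter are the other atom tuples at the \emph{same} radius. These are built from atoms of the same scale (in the paper every atom has the tiny scale $s$), so there is no scale ratio and no decay, and they are not small. In the paper's construction, an unselected tuple with $Z_{Q_1}-Z_{Q_2}=k'>j$ has coefficient product larger than a selected one by the factor $\mathcal L^{(kd-1)(k'-j)}$, and nothing keeps its shell away from the selected regions. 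The paper never estimates these terms. Instead it puts independent random signs on every atom of every function. At $x\in U$ the average at radius $R_{j(x)}$ is then a $k$-fold Rademacher chaos $X=\sum_\alpha a_\alpha\varepsilon^{(1)}_{\alpha_1}\cdots\varepsilon^{(k)}_{\alpha_k}$ with $\Av X^2=\sum_\alpha a_\alpha^2\geq c_0^2$ and $\Av X^4\leq3^k(\Av X^2)^2$. One selected tuple suffices for the lower bound, and every other term only increases $\Av X^2$. Paley--Zygmund gives $\Av\one_{\{|X|>c_0/\sqrt2\}}\geq1/(4\cdot3^k)$, and integrating over $x\in U$ yields one sign choice that works on a fixed fraction of $U$.

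Second, Steps~1 and~3 do not contain the source of $\log(N+1)$. In your design the atoms at a node are useful only at its matched radius, and all other interactions are treated as errors. So nothing lets the same $L^1$ mass contribute at many radii, and in the paper this reuse is exactly where the logarithm comes from.
\begin{itemize}
\item The paper's atoms live only in level-$0$ cubes, with multiplicities $m_P=\mathcal L^{(kd-1)(W-Z_P)}$ and $n_P=\mathcal L^{(kd-1)(W+Z_P)}$ driven by random labels.
\item A tuple selected at level $j$ (that is, $Z_{Q_1}-Z_{Q_2}=j$) then has exactly $\mu_{Q_1}\nu_{Q_2}(\theta R_0^d)^{k-2}=sR_j^{kd-1}$, and its $r_j$ atom tuples have disjoint shells.
\item A cube pair whose first common ancestor has level $j$ is selected with probability $c_j\approx j^{-1}$, by a local limit theorem for the weights $\lfloor\sqrt r\rfloor$. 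Hence $\theta^k\sum_jc_j\approx\rho$ with $\theta^k=\rho/\log(N+1)$.
\item The regions of different levels are not disjoint. The bound $|U|\geq c_U$ comes from the first and second moments of the counting function, averaged over labels and random base points, together with Cauchy--Schwarz.
\end{itemize}
Your Step~3 bound is also misstated: $|\{\B^{(k)}_{E_N}>\lambda\}|\gtrsim(\log(N+1))^k\lambda^{-1/k}$ would give $(\log(N+1))^{k^2}$, whereas $(\log(N+1))^{1/k}\lambda^{-1/k}$ is what is needed.

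Finally, Step~2 needs no slicing, and a single configuration in the ``positive parts'' does not give positivity. The average integrates over a whole slice that meets both signs of the mean-zero atoms. The paper applies the coarea formula in all $kd$ variables and reduces to $\int_{\mathbb R}\prod_{i=1}^k\widehat\phi(ty_i)\,dt>0$ for $\widehat\phi\geq0$. This is stable in $h$ and $\epsilon$, with all $|y_i|\approx k^{-1/2}$, so no degeneracy near $|y_1|^2+|y_2|^2=1$ ever arises.
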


\textbf{Strategy of the proof.}
We briefly describe the idea of the proof. A more detailed outline
is given in Subsection~\ref{sec:construction-idea}.
The construction rests on the observation that the bilinear average
of two mean-zero atoms need not cancel. Let $F$ and $G$ be
$L^1$-normalized atoms of scale $s$ with a fixed function  $\phi$,
centered at $u$ and $v$, where $|u-v|$ is small compared with $R$.
Then 
$$ \A_R(F,G)\geq c(sR^{2d-1})^{-1}$$ on part of a shell of thickness
comparable to $s$ about the sphere $\{x:|x-u|^2+|x-v|^2=R^2\}$, a set
of measure comparable to $sR^{d-1}$. After the coarea formula, this
reduces to the positivity of
$\int_{\mathbb R}\widehat\phi(ty_1)\widehat\phi(ty_2)\,dt$, which
holds because we choose $\phi$ with $\widehat\phi\geq0$
(Lemma~\ref{lem:J-positive} and Proposition~\ref{cor:positive-region}).

We use this observation at the $N$ radii $R_1,\ldots,R_N$ with a
single pair of functions, each a sum of many atoms with total
coefficient $\theta$, so that its Hardy norm is $O(\theta)$.
Labels on a family of cubes select the atom pairs used at each radius.
For a pair of cubes whose first common ancestor is at level $j$,
the normalized count of labels selecting the pair is comparable to
$j^{-1}$ (Lemma~\ref{lem:local-limit}). Summing over $j$ produces the
factor $\log(N+1)$. We control the overlap of the resulting regions
by estimating the integral of their counting function and its square.
Finite sign averages give a choice of signs for which cancellation
between different atom pairs does not prevent the required lower bound.

The construction also gives logarithmic lower bounds for finite
geometric progressions of dyadic radii and counterexamples for dyadic
variation. We give these and other consequences in
Section~\ref{sec:applications}. There we also indicate extensions to
certain hypersurfaces and to horizontal spherical averages on the
Heisenberg group.

\textbf{Organization.}
Sections~\ref{sec:setup-and-outline}--\ref{sec:proof-completion}
prove Theorem~\ref{thm:main} for $d\geq2$.
Section~\ref{sec:setup-and-outline} introduces the cube family, the
dyadic radii and the labels, and outlines the proof.
Section~\ref{sec:construction} proves the positivity lemma for the
atom (Lemma~\ref{lem:J-positive}), constructs the atoms, and
bounds the Hardy norms. Section~\ref{sec:atom-pairs} selects the atom
pairs and proves the local lower bounds
(Proposition~\ref{cor:positive-region}).
Section~\ref{sec:region-counting} proves the estimate
$c_j\approx j^{-1}$ for the normalized label counts
(Lemma~\ref{lem:local-limit}), and Section~\ref{sec:region-moments}
estimates the integral of the counting function and its square
(Proposition~\ref{prop:moments}). Section~\ref{sec:proof-completion}
chooses the signs and completes the proof.
Section~\ref{sec:dimension-one} describes the changes for $d=1$, and
Section~\ref{sec:multilinear} proves Theorem~\ref{rem:multilinear-endpoint}.
Section~\ref{sec:applications} gives further consequences of the construction.

\section{Notation and outline of the proof}
\label{sec:setup-and-outline}

Throughout Sections~\ref{sec:setup-and-outline}--\ref{sec:proof-completion},
we assume $d\geq2$.
Fix a large integer $N$ and let
\begin{equation}\label{eq:theta-choice}
 \theta^2=\frac{\rho}{\log(N+1)},
\end{equation}
where $\rho>0$ is a small constant fixed in
Subsection~\ref{sec:labels-coefficients}.
Let $E_N=\{R_1,\ldots,R_N\}$ be the set of $N$ dyadic radii defined
in \eqref{eq:dyadic-radii} below. We construct two finite sums of
atoms such that
\begin{gather*}
 \|f\|_{H^1(\mathbb R^d)},\|g\|_{H^1(\mathbb R^d)}\lesssim\theta
 \quad{\rm and}\quad
 |\{x\in\mathbb R^d:\B_{E_N}(f,g)(x)>\lambda_0\}|\geq c_U/36.
\end{gather*}
Here $\lambda_0,c_U>0$ and the small constant $\rho>0$ are independent
of $N$. Since
$\|F\|_{L^{1/2,\infty}}\geq\lambda_0|\{|F|>\lambda_0\}|^2$ by
\eqref{eq:weak-quasinorm}, these estimates give \eqref{eq:main-ratio}.

\subsection{The cube family, dyadic radii, labels, and weights}
\label{sec:cube-tree}
Choose a dyadic integer $\mathcal L$ satisfying
$$
 \mathcal L\geq\max\left\{2^7\sqrt d,\;N^{1/d}\right\}.
$$
\begin{definition}
Starting from $[0,1)^d$, divide each cube into ${\mathcal L^d}$ half-open
subcubes of equal side length.  Repeat this subdivision $N$ times.
For $0\leq j\leq N$, let $\mathcal Q_j$ be the collection of all
cubes of side
\begin{equation}\label{eq:dyadic-radii}
 R_j=\mathcal L^{j-N}
\end{equation}
produced in this process. These are the level-$j$ cubes.  In particular, $\mathcal Q_0$
contains ${\mathcal L^{dN}}$ cubes of side $R_0$, and
$\mathcal Q_N=\{[0,1)^d\}$.  Every $R_j$ is a dyadic radius.
\end{definition}

For $Q_1\in\mathcal Q_0$, let $Q_1^{(r)}$ be the unique cube in
$\mathcal Q_r$ containing $Q_1$.  Thus
\begin{equation}\label{eq:cube-ancestors}
 Q_1^{(0)}=Q_1,\qquad Q_1^{(N)}=[0,1)^d.
\end{equation}

Let
$$
 \mathcal D=\bigcup_{h=0}^{N-1}\mathcal Q_h
$$
be the set of all these cubes except $[0,1)^d$, so that
$\#\mathcal D=\sum_{h=1}^{N}{\mathcal L^{dh}}$.
A choice of labels is a map $\omega:\mathcal D\to\{0,1\}$.
Each cube $P\in\mathcal D$ receives one label $\omega(P)$.
Under the uniform average below, the labels of distinct cubes are
independent, even when one cube contains the other.
For $H:\{0,1\}^{\mathcal D}\to\mathbb R$, define
$$
 \Av_\omega H=2^{-\#\mathcal D}\sum_{\omega\in\{0,1\}^{\mathcal D}}H(\omega).
$$
For functions depending on disjoint sets of cube labels, finite
summation gives
\begin{equation}\label{eq:label-factorization}
 \Av_\omega (H_1H_2)=(\Av_\omega H_1)(\Av_\omega H_2).
\end{equation}
The same identity holds after the other labels are fixed.

Recall the ancestors $Q_1^{(r)}$ from
\eqref{eq:cube-ancestors}.  Put
$$
 L_r=\lfloor\sqrt r\rfloor,\qquad
 W=\sum_{r=1}^N L_r.
$$
For a choice of labels $\omega$ and $Q_1\in\mathcal Q_0$, define
$$
 Z_{Q_1}(\omega)=\sum_{r=1}^N L_r\,\omega(Q_1^{(r-1)}),
$$
the weighted sum of the labels of the ancestors of $Q_1$.
We write $Z_{Q_1}$ for $Z_{Q_1}(\omega)$ when $\omega$ is fixed.
In particular, $0\leq Z_{Q_1}\leq W$.

Let $Q_1,Q_2\in\mathcal Q_0$ be distinct. Since
$Q_1^{(0)}\ne Q_2^{(0)}$ and $Q_1^{(N)}=Q_2^{(N)}$, there is a unique
$j\in\{1,\ldots,N\}$ such that
$$
 Q_1^{(j)}=Q_2^{(j)},\qquad
 Q_1^{(j-1)}\ne Q_2^{(j-1)}.
$$
We call $Q=Q_1^{(j)}$ the first common ancestor of $Q_1,Q_2$, and
$j$ its level.
Their containing cubes agree at levels $j,\ldots,N$.
The common labels therefore cancel, giving
\begin{equation}\label{eq:label-difference}
 Z_{Q_1}-Z_{Q_2}
 =\sum_{r=1}^j L_r
   \bigl(\omega(Q_1^{(r-1)})-\omega(Q_2^{(r-1)})\bigr).
\end{equation}
The $2j$ cubes in this difference are distinct.

We use the radius $R_j$ for such cube pairs.
The construction uses the $N$ dyadic radii $R_1,\ldots,R_N$, which
form the set $E_N$. It is a subsequence of the dyadic radii with ratio
$\mathcal L$, which may depend on $N$. All implicit constants in the
bilinear construction depend only on $d$ and the fixed atom,
not on $N$, $\mathcal L$, or the labels and base points.

For each $Q_1\in\mathcal Q_0$, let $\frac18Q_1$ denote the cube
with the same center as $Q_1$ and side length $R_0/8$.
For each $P\in\mathcal Q_0$, choose base points
$a_P,b_P\in {\frac18P}$. Write
$a=(a_{Q_1})_{Q_1\in\mathcal Q_0}$ and
$b=(b_{Q_1})_{Q_1\in\mathcal Q_0}$, and let
$$
 {\Omega}=\prod_{Q_1\in\mathcal Q_0}\bigg({\frac18Q_1}\times {\frac18Q_1}\bigg),
 \qquad
 da\,db=\prod_{Q_1\in\mathcal Q_0}da_{Q_1}\,db_{Q_1}.
$$
With the labels fixed, for $H:\Omega\to\mathbb R$ define
$$
 \Av_{a,b} H
 =\fint_{\Omega}H(a,b)\,da\,db
 =\frac1{|{\Omega}|}\int_{{\Omega}}H(a,b)\,da\,db.
$$
Here $|\Omega|=\prod_{Q_1\in\mathcal Q_0}|{\frac18Q_1}|^2
=(8^{-d}R_0^d)^{2\#\mathcal Q_0}$.

\subsection{Outline of the proof of Theorem~\ref{thm:main}}
\label{sec:construction-idea}

The proof has four steps, shown in Figure~\ref{fig:proof-outline}.

\begin{figure}[t]
\centering
\begin{tikzpicture}[x=1cm,y=1cm,font=\small,
 every node/.style={align=center},
 main/.style={draw,rectangle,text width=5.0cm,inner sep=6pt},
 side/.style={draw,rectangle,text width=5.0cm,inner sep=6pt}]
 \node[main,text width=11.15cm] (construction) at (3.075,1.8)
 {Cube subdivision and atom placement,\\
 labels $\omega(P)\in\{0,1\},\quad {\rm and\ weights\ }\quad L_r=\lfloor\sqrt r\rfloor$\\
 Subsection~\ref{sec:cube-tree} and Section~\ref{sec:construction}};
 \node[main] (local) at (0,0)
 {Local lower bounds on $S_e$\\
 Proposition~\ref{cor:positive-region}};
 \node[main] (union) at (0,-2.15)
 {A fixed union: $|U|\geq c_U$\\
 Corollary~\ref{cor:union}};
 \node[main] (signs) at (0,-4.3)
 {One common sign choice\\
 $|\{\B_{E_N}(f,g)>\lambda_0\}|\geq c_U/36$};
 \node[main] (result) at (0,-6.45)
 {Theorem~\ref{thm:main}\\
 Norm ratio $\gtrsim\log(N+1)$};

 \node[side] (counts) at (6.15,-1.1)
 {Region counts and overlap\\
 $c_j\approx j^{-1}$\\
 Lemma~\ref{lem:local-limit}; Proposition~\ref{prop:moments}};
 \node[side] (moments) at (6.15,-3.25)
 {sign estimates\\
 Lemma~\ref{lem:sign-estimates}};
 \node[side] (norms) at (6.15,-5.4)
 {$\|f\|_{H^1},\|g\|_{H^1}\lesssim\theta$\\
 $\theta^2=\rho/\log(N+1)$\\
 Section~\ref{sec:construction}};

 \draw[->,thick] (construction.south -| local.north) -- (local.north);
 \draw[->,thick] (construction.south -| counts.north) -- (counts.north);
 \draw[->,thick] (local) -- (union);
 \draw[->,thick] (union) -- (signs);
 \draw[->,thick] (signs) -- (result);
 \draw[->,thick] (counts.west) -- (0,-1.1);
 \draw[->,thick] (moments.west) -- (0,-3.25);
 \draw[->,thick] (norms.west) -- (0,-5.4);
\end{tikzpicture}
\caption{Proof of Theorem~\ref{thm:main}. The side estimates are
used at the indicated steps.}
\label{fig:proof-outline}
\end{figure}

\smallskip
\noindent\emph{1. Atoms and Hardy bounds.}
In each $Q_1\in\mathcal Q_0$, choose $m_{Q_1}$ functions
$F_{Q_1,r}$ and $n_{Q_1}$ functions $G_{Q_1,t}$, each a
constant multiple of an atom supported in a cube of side
comparable to $s$, the small scale fixed in \eqref{eq:atom-size}.
Their coefficients satisfy
$$
 m_{Q_1}\mu_{Q_1}=n_{Q_1}\nu_{Q_1}=\theta R_0^d.
$$
Let $f,g$ be their signed sums, as in \eqref{eq:signed-functions}.
Each coefficient sum is $\theta$, so
\eqref{eq:H1-bound} gives the required Hardy bounds for every
choice of labels, base points, and signs.

\smallskip
\noindent\emph{2. Selected pairs and local lower bounds.}
For a level-$j$ cube $Q$, choose the two separated auxiliary cubes
$Q^L,Q^R$ from Section~\ref{sec:placement}.
We select those ordered pairs $Q_1,Q_2\in\mathcal Q_0$ for which
$Q_1\subset Q^L$, $Q_2\subset Q^R$, and $Z_{Q_1}-Z_{Q_2}=j$.
Let $\mathcal E_j$ denote the selected atom pairs
$e=((Q_1,r),(Q_2,t))$ at radius $R_j$.
The choices in Section~\ref{sec:construction} give
$\mu_{Q_1}\nu_{Q_2}=sR_j^{2d-1}$ for every selected pair.
Subsection~\ref{sec:local-lower-bound} attaches to $e$ a region
$S_e$ with $|S_e|\approx sR_j^{d-1}$, and
Proposition~\ref{cor:positive-region} gives
$$
 \A_{R_j}(F_{Q_1,r},G_{Q_2,t})(x)\geq c_0>0
 \qquad(x\in S_e).
$$
Lemma~\ref{lem:separation} makes these regions disjoint
when $(Q_1,Q_2)$ is fixed and $(r,t)$ and $(r',t')$  differ.

\smallskip
\noindent\emph{3. A union of measure bounded below.}
Let $c_j$ be the proportion of label choices that select a
fixed admissible cube pair at level $j$; see \eqref{eq:cj-def}.
By Lemma~\ref{lem:local-limit}, the choice
$L_r=\lfloor\sqrt r\rfloor$ gives $c_j\approx j^{-1}$.
We obtain
$
 \theta^2\sum_{j=1}^N c_j
 \approx\theta^2\log(N+1)=\rho.
$

For the regions above, write
$$
 \mathcal N_j(x)=\sum_{e\in\mathcal E_j}\one_{S_e}(x),
 \qquad \mathcal N(x)=\sum_{j=1}^N\mathcal N_j(x),
 \qquad U=\{x:\mathcal N(x)>0\}.
$$
Proposition~\ref{prop:moments}
states
$$
 \Av_\omega\Av_{a,b}\int\mathcal N\approx\theta^2\sum_{j=1}^Nc_j
 \approx{\rho},\qquad
 \Av_\omega\Av_{a,b}\int\mathcal N^2\lesssim{\rho}+{\rho}^2.
$$
Cauchy--Schwarz on the joint space of labels, base points, and $x$
therefore gives
$$
 \Av_\omega\Av_{a,b}|U|\geq
 \frac{(\Av_\omega\Av_{a,b}\int\mathcal N)^2}{\Av_\omega\Av_{a,b}\int\mathcal N^2}
 \gtrsim\frac{{\rho}^2}{{\rho}+{\rho}^2}>0.
$$
Thus one fixed choice of labels and base points has $|U|\geq c_U>0$.

\smallskip
\noindent\emph{4. One common choice of signs.}
Other atom pairs may cancel the positive contribution of the
selected pair. Fix an ordering of the regions. For each $x\in U$,
choose the first region containing $x$ and let $j(x)$ be its level.
Make this choice before choosing signs.

Let $\Av_{\varepsilon}\Av_{\delta}$ be normalized summation over all
independent signs on the atoms of $f,g$.
By Lemma~\ref{lem:sign-selection}, with $\lambda_0=c_0/\sqrt2$,
$$
 \Av_{\varepsilon}\Av_{\delta}
 \one_{\{|\A_{R_{j(x)}}(f,g)(x)|>\lambda_0\}}
 \geq\frac1{36}\qquad(x\in U).
$$
Integrating in $x$ gives one common sign choice for which
$$
 |\{x\in \mathbb R^d:\B_{E_N}(f,g)(x)>\lambda_0\}|
 \geq |U|/36\geq c_U/36.
$$
The Hardy bounds hold for this choice as well. Hence
\eqref{eq:main-ratio} holds, as explained at the beginning of this
section.
The logarithm comes from $\sum_jc_j$, while each point uses only
one of the $N$ dyadic radii. The following sections establish these four steps.

\section{Atoms in the level-0 cubes}
\label{sec:construction}

The purpose of this section is to construct the functions $f$ and $g$
of Theorem~\ref{thm:main} and to bound their Hardy norms by $C\theta$.
Each function is a finite sum of atoms, all translates and dilates of one
fixed function $\phi$ placed in the level-$0$ cubes. We first choose
$\phi$ with $\widehat\phi\geq0$ and prove the positivity lemma on which
the lower bounds of Section~\ref{sec:atom-pairs} rest. We then fix
the small constants, the number of atoms in each cube (prescribed by
the labels), and their coefficients.

Fix the real radial function
$$
 \chi(x)=
 \begin{cases}
 \displaystyle\exp\left(-\frac1{1-|x|^2}\right),
     & |x|<1,\\[4pt]
 0, & |x|\geq1.
 \end{cases}
$$
Then $\chi\in C_c^\infty(\mathbb R^d)$,
$\supp\chi=\overline{B(0,1)}$, and $\int\chi>0$.
Let
$$
 \widetilde\chi(x)=\chi(-x),\qquad
 \psi=\chi*\widetilde\chi,\qquad
 \phi=-\Delta\psi.
$$
Then $\phi\in C_c^\infty(\mathbb R^d)$ is real and radial,
and $\int\phi=0$. We use the Fourier transform convention
$$
 \widehat h(\xi)=\int h(x)e^{-2\pi ix\cdot\xi}\,dx.
$$
Then
\begin{equation}\label{eq:positive-fourier}
 \widehat\phi(\xi)
 =4\pi^2|\xi|^2|\widehat\chi(\xi)|^2\geq0.
\end{equation}
Moreover, $\supp\phi\subset\supp\psi\subset\overline{B(0,2)}$.

The constants $h_0,\epsilon_0,c_\ast$ of Lemma~\ref{lem:J-positive}
below depend only on $d$, $\phi$, and the integer $k\geq2$ in that
lemma, and not on the placement of the atoms. In the bilinear construction we fix them with $k=2$. We then
choose $\rho$, and finally $\theta$ and $s$. Thus all fixed small
constants are chosen independently of $N$.

\subsection{A positivity lemma}

After the coarea formula, the averages of pairs of atoms in
Section~\ref{sec:atom-pairs}, and of $k$-tuples of atoms in
Section~\ref{sec:multilinear}, reduce to the following integral.
For an integer $k\geq2$, $y=(y_1,\ldots,y_k)\in(\mathbb R^d)^k$,
$h\in\mathbb R$ and $\epsilon\geq0$, put
\begin{equation}\label{eq:coarea-integral-definition}
 J^{(k)}_\epsilon(y,h)
 :=\int_{(\mathbb R^d)^k}\prod_{i=1}^k\phi(\alpha_i)\,
 \delta\Bigl(h-\sum_{i=1}^ky_i\cdot\alpha_i
 +\frac{\epsilon}{2}\sum_{i=1}^k|\alpha_i|^2\Bigr)
 \,d\alpha_1\cdots d\alpha_k.
\end{equation}
Here $\delta$ is the one-dimensional Dirac distribution,
and the integral is interpreted by the coarea formula.
We write $J_\epsilon(y_1,y_2,h)=J^{(2)}_\epsilon((y_1,y_2),h)$, and put
$$
 K_k=\Bigl\{y\in(\mathbb R^d)^k:
 \frac1{4\sqrt k}\leq|y_i|\leq2,\ 1\leq i\leq k\Bigr\}.
$$

\begin{lemma}\label{lem:J-positive}
Let $k\geq2$. There are constants $0<h_0\leq1/2$,
$0<\epsilon_0\leq1/(32\sqrt{dk})$ and $c_\ast>0$, depending only on
$d$, $k$ and $\phi$, such that
$$
 J^{(k)}_\epsilon(y,h)\geq c_\ast
$$
whenever $y\in K_k$, $|h|\leq h_0$, and $0\leq\epsilon\leq\epsilon_0$.
\end{lemma}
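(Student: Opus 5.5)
The plan is to write the $\delta$-function via its Fourier representation, first treat $\epsilon=0$ exactly, and then pass to small $\epsilon$ by continuity. Writing $\delta(u)=\int_{\mathbb R}e^{2\pi i tu}\,dt$, we get formally
$$
 J^{(k)}_0(y,h)=\int_{\mathbb R}e^{2\pi i th}\prod_{i=1}^k\Bigl(\int\phi(\alpha_i)e^{-2\pi i t y_i\cdot\alpha_i}\,d\alpha_i\Bigr)dt
 =\int_{\mathbb R}e^{2\pi i th}\prod_{i=1}^k\widehat\phi(ty_i)\,dt .
$$
To justify this, pair $J^{(k)}_\epsilon(y,\cdot)$ with a test function in $h$. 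Equivalently, note that $J^{(k)}_\epsilon(y,\cdot)$ is the density of the pushforward of the finite signed measure $\prod\phi(\alpha_i)\,d\alpha$ under the smooth map $\Phi_\epsilon(\alpha)=\sum y_i\cdot\alpha_i-\frac\epsilon2\sum|\alpha_i|^2$. Since $|y_i|\ge 1/(4\sqrt k)$ and $|\alpha_i|\le2$ on the support, we have
$$
 |\nabla_{\alpha_i}\Phi_\epsilon|=|y_i-\epsilon\alpha_i|\ge \tfrac1{4\sqrt k}-2\epsilon_0>0 ,
$$
so the map is a submersion there. Hence the pushforward density is a smooth compactly supported function of $(y,h,\epsilon)$, jointly continuous on the compact set $K_k\times[-1,1]\times[0,\epsilon_0]$. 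In particular, its Fourier transform in $h$ equals $\int\prod\phi(\alpha_i)e^{-2\pi i t\Phi_\epsilon(\alpha)}\,d\alpha$.

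At $\epsilon=0$, since $\widehat\phi(\xi)=4\pi^2|\xi|^2|\widehat\chi(\xi)|^2\ge0$ is Schwartz, the $t$-integral converges absolutely. At $h=0$,
$$
 J^{(k)}_0(y,0)=\int_{\mathbb R}\prod_i\widehat\phi(ty_i)\,dt>0 ,
$$
because the integrand is nonnegative, continuous, and strictly positive for small $t\neq0$: indeed $\widehat\chi(0)=\int\chi>0$ forces $\widehat\phi(\xi)>0$ for small $\xi\ne0$. The function $y\mapsto J_0^{(k)}(y,0)$ is continuous and positive on the compact set $K_k$, so it has a positive minimum $2c_*$ there.

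Joint continuity and compactness then give uniform continuity, so there are $h_0\le 1/2$ and $\epsilon_0\le 1/(32\sqrt{dk})$ with $|J_\epsilon^{(k)}(y,h)-J_0^{(k)}(y,0)|\le c_*$ whenever $|h|\le h_0$ and $\epsilon\le\epsilon_0$. This yields $J^{(k)}_\epsilon(y,h)\ge c_*$. Shrinking $\epsilon_0$ does not affect the submersion bound, so the choices are consistent.

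The main obstacle is making the continuity in $\epsilon$ rigorous. Concretely, one must check that the coarea density depends continuously on $(y,h,\epsilon)$ uniformly, which requires the lower bound on $|\nabla\Phi_\epsilon|$ derived above, so that the pushforward density is smooth and can be controlled by integration by parts. As a fallback, one can instead bound
$$
 \Bigl|\int\prod\phi(\alpha_i)\bigl(e^{-2\pi i t\Phi_\epsilon}-e^{-2\pi i t\Phi_0}\bigr)\,d\alpha\Bigr| .
$$
For $|t|\le T$ this is $O(\epsilon T)$. For $|t|>T$, repeated integration by parts, again using the gradient lower bound, gives decay $O(|t|^{-2})$ uniformly in $\epsilon$. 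Choosing $T$ large and then $\epsilon_0$ small makes the $h$-inverse Fourier integral change by less than $c_*$.
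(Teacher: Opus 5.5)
Your proposal is correct, but it takes a qualitative route where the paper gives quantitative estimates.

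Both arguments start from the Fourier representation and from the positivity of $\int_{\mathbb R}\prod_i\widehat\phi(ty_i)\,dt$ at $\epsilon=h=0$.

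\emph{Lower bound at $\epsilon=h=0$.} The paper makes this bound uniform on $K_k$ explicitly, via $\widehat\phi(ty_i)\geq\frac{\pi^2c_\chi^2}{4k}t^2$ for $|t|\leq t_0$, which gives the constant $c_{\chi,t_0}$. You prove positivity pointwise in $y$ and then take a minimum over the compact set $K_k$.

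\emph{Perturbation in $(\epsilon,h)$.} The paper proves a Lipschitz bound
\[
 |J^{(k)}_\epsilon(y,h)-J^{(k)}_0(y,0)|\leq C(\epsilon+|h|).
\]
Nonstationary phase, using the gradient lower bound that you also derive, gives the uniform bounds $|I_{\epsilon,y}(t)|\leq C_M(1+|t|)^{-M}$ and $|\partial_\epsilon I_{\epsilon,y}(t)|\leq C_M(1+|t|)^{1-M}$. The paper then integrates $\partial_\epsilon\prod_i I_{\epsilon,y_i}$ in $\epsilon$ and in $t$. You instead cite the standard fact that the pushforward density of a smooth compactly supported density under a parameter-dependent submersion is jointly continuous. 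You then conclude by uniform continuity on $K_k\times[-1,1]\times[0,1/(32\sqrt{dk})]$.

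\emph{What each buys.} Your version is shorter and conceptually clean. However, it produces no explicit constants, and its analytic content sits inside the continuity fact. That fact is proved by the same integration-by-parts mechanism the paper carries out by hand. The paper's version shows concretely how $h_0$ and $\epsilon_0$ are chosen relative to $c_{\chi,t_0}$.

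\emph{Your fallback.} This is essentially the paper's argument in a slightly weaker form. One small correction: for $|t|\leq T$ the difference of the Fourier transforms is $O(\epsilon|t|)$ pointwise, so its contribution to the $t$-integral is $O(\epsilon T^2)$ rather than $O(\epsilon T)$. Together with the $O(1/T)$ tail, choosing $T$ large and then $\epsilon_0$ small still works.
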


\begin{proof}
Fix $k\geq2$. All constants in this proof may depend on $d$, $k$ and
$\phi$. We prove positivity at $\epsilon=h=0$ and then bound the
perturbations in $h$ and $\epsilon$.

To begin with, note that Fourier inversion in the delta variable gives
$$
 J^{(k)}_0(y,0)
 =\int_{\mathbb R}\prod_{i=1}^k\widehat\phi(ty_i)\,dt.
$$
Since $\widehat\chi(0)\neq0$ and $|y_i|\leq2$, there are
fixed $t_0,c_\chi>0$ such that
$$
 |\widehat\chi(ty_i)|\geq c_\chi
 \qquad
 (|t|\leq t_0,\ y\in K_k,\ 1\leq i\leq k).
$$
By \eqref{eq:positive-fourier}, every factor is nonnegative. Using
also $|y_i|\geq1/(4\sqrt k)$ for $|t|\leq t_0$, we obtain
$$
 J^{(k)}_0(y,0)
 \geq
 \Bigl(\frac{\pi^2c_\chi^2}{4k}\Bigr)^k
 \int_{-t_0}^{t_0}t^{2k}\,dt
 =:c_{\chi,t_0}.
$$

The inequality $|e^{2\pi ith}-1|\leq2\pi|th|$ gives
\begin{align}\label{eq:J 0 h -small}
 |J^{(k)}_0(y,h)-J^{(k)}_0(y,0)|
 &\leq2\pi|h|\int_{\mathbb R}|t|
     \prod_{i=1}^k|\widehat\phi(ty_i)|\,dt
 \leq C|h|.
\end{align}
The last integral is uniformly bounded on $K_k$, since
$\phi\in C_c^\infty$ and $|y_i|\geq1/(4\sqrt k)$ imply
$|\widehat\phi(ty_i)|\leq C(1+|t|)^{-2}$, and $k\geq2$.

To estimate $J^{(k)}_\epsilon(y,h)$ for $\epsilon\geq0$,
Fourier inversion, initially in the distributional sense, gives
$$
 J^{(k)}_\epsilon(y,h)
 =\int_{\mathbb R}e^{2\pi ith}
 \prod_{i=1}^kI_{\epsilon,y_i}(t)\,dt,
$$
where
$$
 I_{\epsilon,y}(t)
 :=\int\phi(\alpha)e^{-2\pi ity\cdot\alpha}
 e^{\pi it\epsilon|\alpha|^2}\,d\alpha=\int\phi(\alpha)e^{2\pi itP_{\epsilon,y}(\alpha)}\,d\alpha
$$
with $P_{\epsilon,y}(\alpha)= -y\cdot \alpha+\frac{\epsilon}{2}|\alpha|^2$.

For $n=0,1$ and every integer $M\geq1$, we claim that
\begin{equation}\label{claim I epsilon n}
 |\partial_\epsilon^n I_{\epsilon,y}(t)|
 \leq C_{M}(1+|t|)^{n-M}
 \qquad\bigl(t\in\R,\ 0\leq\epsilon\leq\tfrac1{32\sqrt{dk}},\
 \tfrac1{4\sqrt k}\leq|y|\leq2\bigr).
\end{equation}
These bounds also justify the absolute convergence of the Fourier
integral for $J^{(k)}_\epsilon$.

First suppose $|t|\geq1$.
On the fixed neighborhood $\{|\alpha|<4\sqrt d\}$
of $\supp\phi$,
$$
|\nabla_\alpha P_{\epsilon,y}(\alpha)|= |-{y}+\epsilon\alpha|
 \geq |y|-\epsilon|\alpha|
 \geq\frac1{4\sqrt k}-4\epsilon\sqrt d
 \geq\frac1{8\sqrt k}.
$$
Consider the differential operator
$$
 \mathcal D_{\epsilon,y,t}
 =\frac{ \nabla_\alpha P_{\epsilon,y}(\alpha) \cdot\nabla_\alpha}
 {2\pi it |\nabla_\alpha P_{\epsilon,y}(\alpha)|^2}.
$$
By differentiation,
$
 \mathcal D_{\epsilon,y,t}
 e^{2\pi it{P_{\epsilon,y}(\alpha)}}
 =
 e^{2\pi it{P_{\epsilon,y}(\alpha)}}
$. Its formal transpose, for integration without complex conjugation, is
$$  \mathcal D_{\epsilon,y,t}^{\mathrm t} (\phi)=-\operatorname{div}_\alpha \bigg( \frac{\phi\cdot \nabla_\alpha P_{\epsilon,y}(\alpha)}{2\pi i t |\nabla_\alpha P_{\epsilon,y}(\alpha)|^2} \bigg).
 $$
On the support of the amplitudes, the coefficients of this
operator and their derivatives are uniformly bounded after removing
the factor $t^{-1}$: the phase gradient is bounded below and
$(\epsilon,y)$ ranges over a compact set. Integration by parts $M$
times, with no boundary terms because the amplitudes are compactly
supported, therefore gives
\begin{align*}
| I_{\epsilon,y}(t)|
 &=\bigg|\int (\mathcal D_{\epsilon,y,t}^{\mathrm t})^M \big(\phi(\alpha)\big)\, e^{2\pi itP_{\epsilon,y}(\alpha)}\,d\alpha\bigg| \leq \frac{C_{M}}{|t|^M}\leq \frac{2^MC_{M}}{(1+|t|)^M}.
\end{align*}
Similarly,
\begin{align*}
| \partial_\epsilon I_{\epsilon,y}(t)|
 &=\bigg|\int \pi i t \ (\mathcal D_{\epsilon,y,t}^{\mathrm t})^M \big( |\alpha|^2 \phi(\alpha)\big)e^{2\pi itP_{\epsilon,y}(\alpha)}\,d\alpha\bigg| \leq \frac{C_{M}}{|t|^{M-1}}\leq \frac{2^{M}C_{M}}{(1+|t|)^{M-1}}.
\end{align*}

For $|t|\leq1$, direct integration gives
$$
 |I_{\epsilon,y}(t)|\leq\|\phi\|_1,\qquad
 |\partial_\epsilon I_{\epsilon,y}(t)|
 \leq\pi|t|\int|\alpha|^2|\phi(\alpha)|\,d\alpha.
$$
Together with the estimates for $|t|\geq1$, these bounds prove
\eqref{claim I epsilon n}, with a constant depending on $M$.

Take $M\geq2$ and differentiate the product of the $k$ factors.
Then \eqref{claim I epsilon n} gives
$$
 \Bigl|
 \partial_\epsilon
 \prod_{i=1}^kI_{\epsilon,y_i}(t)
 \Bigr|
 \leq C_{M}(1+|t|)^{1-kM}.
$$
Integrating this derivative in $\epsilon$ and then in $t$ gives
$$
 |J^{(k)}_\epsilon(y,h)-J^{(k)}_0(y,h)|
 \leq C\epsilon\int_{\mathbb R}(1+|t|)^{1-kM}\,dt
 \leq C\epsilon,
$$
uniformly for $y\in K_k$, $h\in\mathbb R$ and
$0\leq\epsilon\leq1/(32\sqrt{dk})$.
Together with \eqref{eq:J 0 h -small}, after taking a common
constant $C$, we have
$$
 |J^{(k)}_\epsilon(y,h)-J^{(k)}_0(y,0)|
 \leq C(\epsilon+|h|).
$$

Choose $0<h_0\leq1/2$ and
$
 0<\epsilon_0\leq
 \frac1{32\sqrt{dk}}
$
so that $C(h_0+\epsilon_0)\leq \frac{c_{\chi,t_0}}{2}$, and put
$c_\ast=c_{\chi,t_0}/2$.
For $y\in K_k$, $|h|\leq h_0$, and
$0\leq\epsilon\leq\epsilon_0$, we obtain
$$
 J^{(k)}_\epsilon(y,h)\geq J^{(k)}_0(y,0) -  |J^{(k)}_\epsilon(y,h)-J^{(k)}_0(y,0)|
 \geq c_{\chi,t_0}-C(\epsilon+|h|)
 \geq c_\ast.
$$
This proves the lemma.
\end{proof}

\subsection{The number and centers of the atoms}
\label{sec:labels-coefficients}

Let $h_0$, $\epsilon_0$ and $c_\ast$ be the constants of
Lemma~\ref{lem:J-positive} with $k=2$. In particular,
$\epsilon_0\leq1/(32\sqrt{2d})$.
Fix
$$
 0<{\rho}\leq(\log2)
 \min\left\{{2^{-10}},\epsilon_0\right\}.
$$
Since $\theta^2=\rho/\log(N+1)\leq\rho/\log2$ by
\eqref{eq:theta-choice},
\begin{equation}\label{eq:theta-small}
 \theta^2\leq\min\{2^{-10},\epsilon_0\}
\end{equation}
for every $N\geq1$.
We use \eqref{eq:theta-small} in the form $2^6\theta^2\leq2^{-4}$ in
Subsection~\ref{sec:atoms-functions}, and in the form
$\theta^2\leq\epsilon_0\leq1/8$ in Section~\ref{sec:atom-pairs}.

For each $Q_1\in\mathcal Q_0$, put
\begin{equation}\label{eq:atom-numbers}
{m_{Q_1}=\mathcal L^{(2d-1)(W-Z_{Q_1})}},\qquad
{n_{Q_1}=\mathcal L^{(2d-1)(W+Z_{Q_1})}}.
\end{equation}
These are positive integers.  They are the numbers of atoms used
in $f$ and $g$, respectively, inside $Q_1$. Through
$Z_{Q_1}$ they depend on the labels, as do the coefficients,
centers, and blocks defined below. We keep this dependence implicit.
Their product does not depend on the labels:
$$
 m_{Q_1}n_{Q_1}={\mathcal L^{2(2d-1)W}}.
$$
Let
\begin{equation}\label{eq:atom-size}
 s=\theta^2R_0\,{\mathcal L^{-2(2d-1)W}}.
\end{equation}
It satisfies
\begin{equation}\label{eq:fit-identity}
 m_{Q_1}n_{Q_1}s=\theta^2R_0,
 \qquad s\leq\theta^2R_0.
\end{equation}

For $Q_1\in\mathcal Q_0$, use the base points chosen in
Section~\ref{sec:cube-tree} and define the centers by
$$
\begin{aligned}
 u_{Q_1,r}&=a_{Q_1}+{2^6}n_{Q_1}rs\,e_d,
 &&0\leq r<m_{Q_1},\\
 v_{Q_1,t}&=b_{Q_1}+{2^4}ts\,e_d,
 &&0\leq t<n_{Q_1}.
\end{aligned}
$$
The indices $r,t$ number the atoms inside $Q_1$.
The first group has spacing ${2^6}n_{Q_1}s$ and the second has
spacing ${2^4}s$.  Only the last coordinate changes.
By \eqref{eq:fit-identity}, their maximum displacements satisfy
\begin{equation}\label{eq:centers-diameter}
\begin{aligned}
 \max_r|u_{Q_1,r}-a_{Q_1}|
 &\leq {2^6}m_{Q_1}n_{Q_1}s
 ={2^6}\theta^2R_0,\\
 \max_t|v_{Q_1,t}-b_{Q_1}|
 &\leq {2^4}n_{Q_1}s\leq {2^4}\theta^2R_0.
\end{aligned}
\end{equation}
Thus both groups remain close to their base points, even when their
numbers are large.  The diameter of either group satisfies the same
bound, since its centers lie on one line in increasing order.
Figure~\ref{fig:atom-arrangement} shows the arrangement of the two
groups inside $Q_1$.

\begin{figure}[t]
\centering
\begin{tikzpicture}[x=1cm,y=1cm,font=\scriptsize]
  \draw[black!70] (0,0) rectangle (4,4);
  \node[anchor=north west,font=\small] at (.12,3.92) {$Q_1$};
  \filldraw[fill=black!4,draw=black!45,dashed]
    (1.75,1.75) rectangle (2.25,2.25);
  \draw[blue!65!black,very thick] (1.91,1.90)--(1.91,2.10);
  \draw[orange!85!black,very thick] (2.10,1.86)--(2.10,1.91);
  \fill[blue!65!black] (1.91,1.90) circle (1.4pt);
  \fill[orange!85!black] (2.10,1.86) circle (1.4pt);
  \draw[blue!65!black] (.72,2.75)--(1.91,1.90);
  \node[anchor=east,text=blue!65!black] at (.73,2.75)
    {$a_{Q_1}$};
  \draw[orange!85!black] (3.25,1.10)--(2.10,1.86);
  \node[anchor=west,text=orange!85!black] at (3.22,1.10)
    {$b_{Q_1}$};
  \draw[black!45] (.70,.83)--(1.75,1.75);
  \node[anchor=east] at (.75,.79) {$\frac18Q_1$};
  \draw[<->,black!55] (0,-.30)--(4,-.30)
    node[midway,fill=white] {$R_0$};
  \draw[->,black!55] (3.55,2.30)--(3.55,3.40)
    node[above] {$e_d$};

    \node[font=\small,text=blue!65!black] at (7.35,4.35)
    {Atoms in $f$};
  \draw[blue!65!black] (6.30,.55)--(6.30,1.85);
  \draw[blue!65!black] (6.30,3.05)--(6.30,3.45);
  \foreach \yy in {.55,1.85,3.45}{
    \filldraw[fill=blue!10,draw=blue!65!black]
      (6.19,\yy-.11) rectangle (6.41,\yy+.11);
    \fill[blue!65!black] (6.30,\yy) circle (1.2pt);
  }
  \node[text=blue!65!black,anchor=west] at (6.64,.55)
    {$u_{Q_1,0}=a_{Q_1}$};
  \node[text=blue!65!black,anchor=west] at (6.64,1.85)
    {$u_{Q_1,1}$};
  \node[text=blue!65!black,anchor=west] at (6.64,3.45)
    {$u_{Q_1,m_{Q_1}-1}$};
  \node[text=blue!65!black,font=\large] at (6.30,2.46)
    {$\vdots$};
  \draw[<->,blue!65!black] (5.85,.55)--(5.85,1.85)
    node[midway,left,rotate=90,anchor=south]
      {$2^6n_{Q_1}s$};
  \draw[->,black!55] (8.95,.30)--(8.95,3.90)
    node[above] {$e_d$};

    \node[font=\small,text=orange!85!black] at (12.25,4.35)
    {Atoms in $g$};
  \draw[orange!85!black] (11.20,.55)--(11.20,1.85);
  \draw[orange!85!black] (11.20,3.05)--(11.20,3.45);
  \foreach \yy in {.55,1.85,3.45}{
    \filldraw[fill=orange!12,draw=orange!85!black]
      (11.09,\yy-.11) rectangle (11.31,\yy+.11);
    \fill[orange!85!black] (11.20,\yy) circle (1.2pt);
  }
  \node[text=orange!85!black,anchor=west] at (11.54,.55)
    {$v_{Q_1,0}=b_{Q_1}$};
  \node[text=orange!85!black,anchor=west] at (11.54,1.85)
    {$v_{Q_1,1}$};
  \node[text=orange!85!black,anchor=west] at (11.54,3.45)
    {$v_{Q_1,n_{Q_1}-1}$};
  \node[text=orange!85!black,font=\large] at (11.20,2.46)
    {$\vdots$};
  \draw[<->,orange!85!black] (10.75,.55)--(10.75,1.85)
    node[midway,left,rotate=90,anchor=south] {$2^4s$};
  \draw[->,black!55] (13.85,.30)--(13.85,3.90)
    node[above] {$e_d$};

  \node[text=black!60] at (9.90,-.30)
    {The two columns are enlarged separately.};
\end{tikzpicture}
\caption{Atom centers in a level-0 cube $Q_1$.
The base points lie in $\frac18Q_1$, and both columns run in the
$e_d$ direction. The small squares contain the atom supports defined in
Section~\ref{sec:atoms-functions}.
Only the first and last few atoms are shown. Every support lies in
$Q_1$. The whole column need not lie in $\frac18Q_1$.
The drawing shows families with more than one atom.}
\label{fig:atom-arrangement}
\end{figure}

\subsection{The atom blocks, their supports, and their Hardy norms}
\label{sec:atoms-functions}

Give each level-0 cube a total coefficient $\theta R_0^d$ in each
function.  Define
\begin{equation}\label{eq:atom-coefficients}
 \mu_{Q_1}=\frac{\theta R_0^d}{m_{Q_1}},\qquad
 \nu_{Q_1}=\frac{\theta R_0^d}{n_{Q_1}}.
\end{equation}
Define the blocks by
\begin{equation}\label{eq:atom-blocks}
\begin{aligned}
 F_{Q_1,r}(x)&=\mu_{Q_1}s^{-d}
       \phi\left(\frac{x-u_{Q_1,r}}s\right),
 &&0\leq r<m_{Q_1},\\
 G_{Q_1,t}(x)&=\nu_{Q_1}s^{-d}
       \phi\left(\frac{x-v_{Q_1,t}}s\right),
 &&0\leq t<n_{Q_1}.
\end{aligned}
\end{equation}
Both blocks have integral zero. Put $Q_\phi=[-2,2]^d$, so that
$\supp\phi\subset\overline{B(0,2)}\subset Q_\phi$. Then
$$
 \supp F_{Q_1,r}\subset u_{Q_1,r}+sQ_\phi,\qquad
 \supp G_{Q_1,t}\subset v_{Q_1,t}+sQ_\phi.
$$
These support cubes have side $4s$. Their placement is shown in
Figure~\ref{fig:atom-arrangement}.

By \eqref{eq:theta-small}, $2^6\theta^2\leq2^{-4}$ and
$2\theta^2\leq2^{-4}$.
Let $c_{Q_1}$ be the center of $Q_1$.
For $x\in\supp F_{Q_1,r}$ and each coordinate $i$,
$$
 |x_i-(c_{Q_1})_i|
 \leq |(a_{Q_1})_i-(c_{Q_1})_i|
      +|(u_{Q_1,r}-a_{Q_1})_i|+|x_i-(u_{Q_1,r})_i|
 \leq 3R_0/16<R_0/2.
$$
The three terms are at most $R_0/16$ each, by
$a_{Q_1}\in {\frac18Q_1}$, \eqref{eq:centers-diameter}, and
$|x_i-(u_{Q_1,r})_i|\leq2s\leq2\theta^2R_0$.
The same calculation, with $b_{Q_1},v_{Q_1,t}$, applies to
$G_{Q_1,t}$.  Therefore, $$\supp F_{Q_1,r}, \ \ \supp G_{Q_1,t} \subset Q_1$$
and these inclusions hold for every base point $a_{Q_1}$, $b_{Q_1}$ in ${\frac18Q_1}$.

Choose a fixed $C_\phi>0$ such that
$C_\phi\geq |Q_\phi|\|\phi\|_\infty$.  Then both 
$$
 \frac{F_{Q_1,r}}{C_\phi\mu_{Q_1}}\qquad{\rm and}\qquad
 \frac{G_{Q_1,t}}{C_\phi\nu_{Q_1}}
$$
are standard $(1,\infty)$-atoms: each has integral zero, is supported
in its cube of side $4s$, and 
$$
 \left\|\frac{F_{Q_1,r}}{C_\phi\mu_{Q_1}}\right\|_\infty
 =\frac{s^{-d}\|\phi\|_\infty}{C_\phi}
 \leq\frac1{s^d|Q_\phi|}
 =\frac1{|u_{Q_1,r}+sQ_\phi|}
$$
and the same inequality holds for $ \frac{G_{Q_1,t}}{C_\phi\nu_{Q_1}}$. Thus, for the atomic norm,
$$
 \|F_{Q_1,r}\|_{H^1(\R^d)}\leq C_\phi\mu_{Q_1},\qquad
 \|G_{Q_1,t}\|_{H^1(\R^d)}\leq C_\phi\nu_{Q_1}.
$$
The atomic norm is equivalent to the usual real Hardy space norm;
see \cite{Coifman,Latter,SteinHA}.  The equivalence constants depend
only on $d$.

\subsection{The functions and their Hardy norms}

For any signs $\varepsilon_{Q_1,r},\delta_{Q_1,t}\in\{-1,1\}$, let
\begin{equation}\label{eq:signed-functions}
 f=\sum_{Q_1}\sum_{r=0}^{m_{Q_1}-1}
       \varepsilon_{Q_1,r}F_{Q_1,r},\qquad
 g=\sum_{Q_1}\sum_{t=0}^{n_{Q_1}-1}
       \delta_{Q_1,t}G_{Q_1,t}.
\end{equation}
Each block belongs to $C_c^\infty(\mathbb R^d)$ and the sums are
finite. Hence $f,g\in C_c^\infty(\mathbb R^d)$,
are supported in $[0,1)^d$, and have integral zero.
By \eqref{eq:atom-coefficients}, for every $Q_1\in\mathcal Q_0$,
$$
 m_{Q_1}\mu_{Q_1}=n_{Q_1}\nu_{Q_1}=\theta R_0^d.
$$
Since $\#\mathcal Q_0=R_0^{-d}$,
$$
 \sum_{Q_1}m_{Q_1}\mu_{Q_1}=\theta,\qquad
 \sum_{Q_1}n_{Q_1}\nu_{Q_1}=\theta.
$$
The triangle inequality gives
\begin{equation}\label{eq:H1-bound}
\begin{aligned}
 \|f\|_{H^1(\mathbb R^d)}
 &\leq C\sum_{Q_1}\sum_{r=0}^{m_{Q_1}-1}\mu_{Q_1}=C\theta,\\
 \|g\|_{H^1(\mathbb R^d)}
 &\leq C\sum_{Q_1}\sum_{t=0}^{n_{Q_1}-1}\nu_{Q_1}=C\theta.
\end{aligned}
\end{equation}
Here $C=C_dC_\phi$, where $C_d$ is the constant in the equivalence of
the atomic and real Hardy norms. Thus $C$ depends only on $d$ and the
fixed function $\phi$.
In particular, the later choices of labels, base points, and signs
preserve \eqref{eq:H1-bound}.

\section{Atom pairs and local lower bounds}
\label{sec:atom-pairs}

This section selects the atom pairs matched to each radius $R_j$ and
proves the local lower bounds. For a level-$j$ cube $Q$, the labels
select ordered pairs of level-$0$ cubes inside $Q$. The selection
rule is designed so that the product of the coefficients of a
selected atom pair is exactly $sR_j^{2d-1}$, which cancels the
normalization of the coarea formula. This is what makes the lower
bound in Proposition~\ref{cor:positive-region} independent of the
pair. We then attach to each selected pair a region $S_e$, part of a
thin spherical shell, on which the bilinear average of the pair is
bounded below, and show that the regions of different atom pairs of
the same cube pair are disjoint.

\subsection{Cube pairs}
\label{sec:placement}

Put $\tau=1/(2\sqrt d)$.
Fix $1\leq j\leq N$ and a level-$j$ cube $Q$, with center
$c_Q$ and side $R_j$.
Inside $Q$, choose two auxiliary cubes $Q^L$ and $Q^R$, depending on
$Q$, each of side
${2^{-4}} \tau  R_j$, centered respectively at
$$
 c_Q-\frac{{2^{-4}} R_j}{\sqrt2}e_1
 \qquad{\rm and}\qquad
 c_Q+\frac{{2^{-4}} R_j}{\sqrt2}e_1.
$$

\begin{lemma}\label{lem:first-common-ancestor}
Fix $1\leq j\leq N$ and a level-$j$ cube $Q$. 
Let $(Q_1,Q_2)\in\mathcal Q_0\times\mathcal Q_0$ satisfy
\begin{equation}\label{eq:admissible-pair}
 Q_1\subset Q^L,
 \qquad Q_2\subset Q^R.
\end{equation}
Then
\begin{equation}\label{eq:geometric-first-ancestor}
 Q_1^{(j)}=Q_2^{(j)}=Q,
 \qquad Q_1^{(j-1)}\ne Q_2^{(j-1)}.
\end{equation}
\end{lemma}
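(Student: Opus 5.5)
The plan is to prove the lemma by elementary geometry of the grid, showing that $Q^L$ and $Q^R$ lie inside $Q$ and are separated by a gap larger than the side $R_{j-1}$ of a level-$(j-1)$ cube.

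\textbf{Step 1: $Q^L,Q^R\subset Q$.} The centers of $Q^L,Q^R$ are displaced from $c_Q$ by $2^{-4}R_j/\sqrt2$ in the $e_1$ direction only, and each cube has half-side $2^{-5}\tau R_j\le 2^{-6}R_j$. Every point of $Q^L\cup Q^R$ therefore differs from $c_Q$ by at most $2^{-4}R_j/\sqrt2+2^{-6}R_j<R_j/2$ in each coordinate, so it lies in $Q$. (If the half-open conventions at the boundary matter, note that this bound is strict.)

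\textbf{Step 2: $Q_1^{(j)}=Q_2^{(j)}=Q$.} Since the subdivision is nested, every level-$0$ cube is contained in exactly one level-$j$ cube, and distinct level-$j$ cubes are disjoint. From $Q_1\subset Q^L\subset Q$, the cube $Q_1^{(j)}$ meets $Q$, hence equals $Q$. The same argument applies to $Q_2$.

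\textbf{Step 3: $Q_1^{(j-1)}\neq Q_2^{(j-1)}$.} Take $x\in Q_1$ and $y\in Q_2$. Their first coordinates satisfy
\[
 y_1-x_1\ \ge\ \frac{2\cdot 2^{-4}R_j}{\sqrt2}-2^{-4}\tau R_j
 \ \ge\ 2^{-4}R_j\Bigl(\sqrt2-\tfrac12\Bigr)\ >\ 2^{-5}R_j .
\]
A level-$(j-1)$ cube has side $R_{j-1}=R_j/\mathcal L\le 2^{-7}R_j/\sqrt d$, because $\mathcal L\ge 2^7\sqrt d$. Any two points of one level-$(j-1)$ cube differ by less than $R_{j-1}<2^{-5}R_j$ in the first coordinate. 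Hence $x$ and $y$ cannot lie in the same level-$(j-1)$ cube, so $Q_1^{(j-1)}\neq Q_2^{(j-1)}$.

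The only point needing care is Step 3. The key input there is the lower bound $\mathcal L\ge 2^7\sqrt d$, which makes a level-$(j-1)$ cube much smaller than the gap between $Q^L$ and $Q^R$. The rest is bookkeeping with the nested grid.
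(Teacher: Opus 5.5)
Your proof is correct and follows essentially the same route as the paper. Both arguments rest on one fact: the first-coordinate gap $(\sqrt2-\tau)2^{-4}R_j$ between $Q^L$ and $Q^R$ exceeds the side $R_j/\mathcal L$ of a level-$(j-1)$ cube, which follows from $\mathcal L\ge 2^7\sqrt d$. You additionally check $Q^L,Q^R\subset Q$ and $Q_1^{(j)}=Q_2^{(j)}=Q$ explicitly, which the paper simply takes from the construction.
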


\begin{proof}
The sides of $Q^L,Q^R$ are parallel to the coordinate axes.
Their intervals in coordinates $2,\ldots,d$ agree, and their intervals in the first coordinate
are separated by $(\sqrt2-\tau){2^{-4}} R_j$.  Therefore
$$
 \operatorname{dist}(Q^L,Q^R)
 =(\sqrt2-\tau){2^{-4}} R_j
 >\frac{R_j}{\mathcal L},
$$
by the lower bound $\mathcal L\geq2^7\sqrt d$ fixed in
Section~\ref{sec:cube-tree}.
A level-$(j-1)$ subcube of $Q$ has an interval in the first coordinate
of length $R_j/\mathcal L$, so it cannot meet both $Q^L$ and
$Q^R$.  Thus $Q_1,Q_2$ lie in different level-$(j-1)$
subcubes of $Q$.  Since both are contained in $Q$, this proves
\eqref{eq:geometric-first-ancestor}.
\end{proof}

The same choice gives $\mathcal L\geq2^6/\tau$, so $ {2^{-4}} \tau R_j/R_0= {2^{-4}}\tau \mathcal L^j\geq4$.
Each coordinate interval contains between ${2^{-4}} \tau R_j/R_0-2$ and
${2^{-4}}\tau  R_j/R_0+1$ whole intervals of length $R_0$ from the subdivision.  Hence
$$
 \left({\frac{2^{-5}\tau R_j}{R_0}}\right)^d
 \leq\#\{Q_1\in\mathcal Q_0:Q_1\subset Q^L\}\leq
 \left({\frac{2^{-3}\tau R_j}{R_0}}\right)^d.$$
Similarly, 
$
 \#\{Q_2\in\mathcal Q_0:Q_2\subset Q^R\}\approx({2^{-4}} \tau  )^d{\mathcal L^{dj}}.
$

Let
\begin{align}\label{P Q}
 \mathcal P_Q=\{(Q_1,Q_2)\in\mathcal Q_0\times\mathcal Q_0:
 Q_1\subset Q^L,\ Q_2\subset Q^R\}
\end{align}
be the set of cube pairs satisfying \eqref{eq:admissible-pair}.
Multiplying the counts gives
\begin{equation}\label{eq:number-admissible-cube-pairs}
 {c} {\mathcal L^{2dj}}\leq
 \#\mathcal P_Q
 \leq {C} {\mathcal L^{2dj}}.
\end{equation}
Fix $1\leq j\leq N$ and a level-$j$ cube $Q$.
Let $Q_1,Q_2\in\mathcal Q_0$ satisfy
\eqref{eq:admissible-pair}.  Lemma~\ref{lem:first-common-ancestor} gives
$$
 Q_1^{(j-1)}\ne Q_2^{(j-1)}.
$$
We select the ordered pair $(Q_1,Q_2)$ when
\begin{equation}\label{eq:selection-condition}
 Z_{Q_1}-Z_{Q_2}=j.
\end{equation}
Put $r_j={\mathcal L^{(2d-1)(2W-j)}}$. For such a pair,
\eqref{eq:atom-numbers} gives
\begin{equation}\label{eq:selected-atom-pair-count}
 m_{Q_1}n_{Q_2}=r_j.
\end{equation}
In particular, this product depends only on $j$, not on the
individual values of $Z_{Q_1}$ and $Z_{Q_2}$.

Combining \eqref{eq:selected-atom-pair-count} with
\eqref{eq:dyadic-radii} and \eqref{eq:atom-size} gives
\begin{align}\label{eq:count-identity}
 {\mathcal L^{2dj}}r_j
 &=\mathcal L^{2dj}\mathcal L^{(2d-1)(2W-j)}
 =\mathcal L^{j+2(2d-1)W}
 =\theta^2\frac{R_j}{s}.
\end{align}
The coefficient definitions \eqref{eq:atom-coefficients} then give
\begin{equation}\label{eq:coefficient-product}
 \mu_{Q_1}\nu_{Q_2}
 =\frac{\theta^2R_0^{2d}}{r_j}
 =sR_j^{2d-1}
\end{equation}
whenever $Z_{Q_1}-Z_{Q_2}=j$.

For $Q\in\mathcal Q_j$, let
$$
 \mathcal P_{j,Q}
 =\{(Q_1,Q_2)\in\mathcal P_Q: Z_{Q_1}-Z_{Q_2}=j\}
$$
be the set of selected cube pairs, and define
\begin{equation}\label{eq:selected-atom-indices-Q}
 \mathcal E_{j,Q}
 =\bigl\{((Q_1,r),(Q_2,t)):
 (Q_1,Q_2)\in\mathcal P_{j,Q},\
 0\leq r<m_{Q_1},\ 0\leq t<n_{Q_2}\bigr\}.
\end{equation}
Let
\begin{equation}\label{eq:selected-atom-indices}
 \mathcal E_j
 =\bigcup_{Q\in\mathcal Q_j}\mathcal E_{j,Q}.
\end{equation}
We write $e=((Q_1,r),(Q_2,t))$ for the elements of these sets and
call them the selected atom pairs.
These sets depend on the labels through \eqref{eq:selection-condition}
and \eqref{eq:atom-numbers}, and the regions $S_e$ defined below
depend also on the base points through the centers $u,v$. We keep
these dependences implicit.

\subsection{Local lower bounds}
\label{sec:local-lower-bound}

Let $0<h_0\leq1/2$ be the constant of Lemma~\ref{lem:J-positive}
with $k=2$, fixed in Subsection~\ref{sec:labels-coefficients}.
This constant is independent of $N$.
For $e=((Q_1,r),(Q_2,t))\in\mathcal E_j$, put
$u=u_{Q_1,r}$, $v=v_{Q_2,t}$, and
\begin{equation}\label{eq:pair-midpoint}
 z_e=(u+v)/2.
\end{equation}
For $R>0$ and $x,u,v\in\mathbb R^d$, define
$$
 \Phi_R(x;u,v)=|x-u|^2+|x-v|^2-R^2.
$$
Put
$$
 r_{u,v}^2=\frac{R^2}{2}-\frac{|u-v|^2}{4}.
$$
We suppress the dependence on $R$, and $r_{u,v}>0$ whenever
$|u-v|<\sqrt2R$, which is the case below. Completing the square gives
\begin{equation}\label{eq:pair-sphere-radius}
 \Phi_R(x;u,v)=2\bigl(|x-z_e|^2-r_{u,v}^2\bigr).
\end{equation}
Set $R=R_j$ in this formula and define
\begin{equation}\label{eq:atom-pair-region}
 S_e
 =\left\{z_e+\lambda\zeta:
 \begin{array}{l}
 \lambda>0,\quad\zeta\in\mathbb S^{d-1},\quad
              |\zeta-e_d|\leq2^{-5},\\[5pt]
 |\lambda^2-r_{u,v}^2|\leq(h_0/2)sR_j
 \end{array}\right\}.
\end{equation}
Figure~\ref{fig:atom-pair-region} shows this part of a thin spherical shell.
By \eqref{eq:fit-identity}, $R_j\geq R_0$, and \eqref{eq:theta-small},
$$
 0<\frac{s}{R_j}
 \leq\theta^2
 \leq\epsilon_0
 \qquad(1\leq j\leq N).
$$

\begin{figure}[t]
\centering
\begin{tikzpicture}[x=1cm,y=1cm,font=\small]
  \fill[blue!18]
    (70:2.43) arc[start angle=70,end angle=110,radius=2.43]
    -- (110:2.17)
    arc[start angle=110,end angle=70,radius=2.17] -- cycle;

  \draw[black!40,dashed] (0,0) circle[radius=2.30];
  \draw[blue!65!black,thick]
    (70:2.43) arc[start angle=70,end angle=110,radius=2.43];
  \draw[blue!65!black,thick]
    (70:2.17) arc[start angle=70,end angle=110,radius=2.17];
  \draw[blue!65!black] (70:2.17)--(70:2.43);
  \draw[blue!65!black] (110:2.17)--(110:2.43);
  \draw[black!35,dotted] (0,0)--(70:2.43);
  \draw[black!35,dotted] (0,0)--(110:2.43);

  \draw[->,black!65] (0,0)--(0,3.10)
    node[above] {$e_d$};
  \draw[->,orange!85!black] (0,0)--(78:2.80)
    node[above right] {$\zeta$};
  \fill[orange!85!black] (78:2.37) circle[radius=1.8pt];
  \draw[orange!85!black] (78:2.37)--(1.18,2.40);
  \node[anchor=west,text=orange!85!black] at (1.22,2.40)
    {$x=z_e+\lambda\zeta$};
  \node[text=orange!85!black] at (.51,1.35) {$\lambda$};

  \draw[black!65] (-.20,-.02)--(.20,.02);
  \fill (-.20,-.02) circle[radius=1.3pt];
  \fill (.20,.02) circle[radius=1.3pt];
  \fill (0,0) circle[radius=1.5pt];
  \node[above left] at (0,.05) {$z_e$};
  \draw[black!55] (-.20,-.02)--(-.65,-.42);
  \node[below left] at (-.65,-.42) {$u=u_{Q_1,r}$};
  \draw[black!55] (.20,.02)--(.65,-.42);
  \node[below right] at (.65,-.42) {$v=v_{Q_2,t}$};

  \draw[black!55] (0,0)--(-75:2.30)
    node[pos=.66,below left] {$r_{u,v}$};
  \node[anchor=east,text=black!65] at (-1.55,-1.75)
    {$\Phi_{R_j}(x;u,v)=0$};
  \draw[black!40] (-1.53,-1.62)--(225:2.30);

  \node[text=blue!65!black] at (-1.65,2.82) {$S_e$};
  \draw[->,blue!65!black] (-1.42,2.73)--(103:2.32);
  \draw[black!50] (72:2.43)--(1.65,3.10)--(3.45,3.10);
  \node[anchor=west] at (3.50,3.10)
    {$|x-z_e|^2=r_{u,v}^2+\dfrac{h_0}{2}sR_j$};
  \draw[black!50] (70:2.17)--(1.70,1.65)--(3.45,1.65);
  \node[anchor=west] at (3.50,1.65)
    {$|x-z_e|^2=r_{u,v}^2-\dfrac{h_0}{2}sR_j$};
  \node[anchor=west] at (3.50,.75)
    {Directions: $|\zeta-e_d|\leq2^{-5}$.};
\end{tikzpicture}
\caption{A two-dimensional section of $S_e$ in $\mathbb R^d$.
The shaded region is part of a thin shell centered at $z_e=(u+v)/2$.
The dashed circle has radius $r_{u,v}$.
The cap opening and shell thickness are enlarged.}
\label{fig:atom-pair-region}
\end{figure}

Thus $S_e$ is part of a thin shell about the sphere
$\Phi_{R_j}(x;u,v)=0$, centered at $z_e$ and of radius $r_{u,v}$.
Its directions lie in the fixed upper cap
$|\zeta-e_d|\leq{2^{-5}}$.  In particular,
\begin{equation}\label{size Phi}
|\Phi_{R_j}(x;u,v)|\leq h_0sR_j
\end{equation}
for $x\in S_e$.

Since $u\in Q^L$ and $v\in Q^R$, the positions and side
lengths of these cubes give $|u-v|\leq2^{-3}R_j$.
Hence $r_{u,v}\approx R_j$.
We claim that
\begin{equation}\label{eq:region-size-and-lower-bound}
 |S_e|\approx sR_j^{d-1}.
\end{equation}

Indeed, $|u-v|\leq2^{-3}R_j$ gives
$127\cdot2^{-8}R_j^2\leq r_{u,v}^2\leq R_j^2/2$.
Using $h_0\leq1$ and $s/R_j\leq\epsilon_0\leq1/8$, we obtain
\begin{equation}\label{ruv + -}
 \frac14R_j^2
 \leq r_{u,v}^2-\frac{h_0}{2}sR_j
 \leq r_{u,v}^2+\frac{h_0}{2}sR_j
 \leq\frac9{16}R_j^2.
\end{equation}
The angular cap in \eqref{eq:atom-pair-region} has fixed positive
surface measure. Polar coordinates centered at $z_e$ therefore give
$$
 |S_e|\approx
 \int_{(r_{u,v}^2-h_0sR_j/2)^{1/2}}
      ^{(r_{u,v}^2+h_0sR_j/2)^{1/2}}\lambda^{d-1}\,d\lambda
 \approx R_j^{d-1}\frac{h_0sR_j}{R_j}
 \approx sR_j^{d-1}.
$$
Here \eqref{ruv + -} gives $\lambda\approx R_j$, and rationalizing
the endpoints gives a radial width comparable to $h_0s$.
The constants depend only on $d$ and the fixed $h_0$.

\subsubsection{The lower bound on \texorpdfstring{$S_e$}{Se}}

\begin{proposition}\label{cor:positive-region}
There is a constant $c_0>0$ independent of $N,j,e$, the labels,
and the base points, such that
 for every $e=((Q_1,r),(Q_2,t))\in\mathcal E_j$,
$$
 \A_{R_j}(F_{Q_1,r},G_{Q_2,t})(x)
 \geq c_0,
 \qquad x\in S_e.
$$
\end{proposition}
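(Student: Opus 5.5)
The plan is to reduce the average to the integral $J_\epsilon(y_1,y_2,h)=J^{(2)}_\epsilon((y_1,y_2),h)$ of Lemma~\ref{lem:J-positive}. The coarea formula and the rescaling $y\mapsto(x-u-s\alpha)/R$, $z\mapsto(x-v-s\beta)/R$ do this. The coefficient identity \eqref{eq:coefficient-product} then cancels every normalizing factor. Throughout, write $R=R_j$, $u=u_{Q_1,r}$, $v=v_{Q_2,t}$, and fix $x\in S_e$.

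\emph{Step 1 (coarea reduction).} For normalized surface measure on $\mathbb S^{2d-1}$ and continuous $H$ on $\mathbb R^{2d}$, we have
\[
\int_{\mathbb S^{2d-1}}H(Rw)\,d\sigma(w)=\frac{2R}{|\mathbb S^{2d-1}|R^{2d-1}}\int_{\mathbb R^{2d}}H(w)\,\delta\bigl(|w|^2-R^2\bigr)\,dw .
\]
This is the coarea formula together with $\delta(|w|-R)=2R\,\delta(|w|^2-R^2)$.

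Apply this to $H(w_1,w_2)=F_{Q_1,r}(x-w_1)G_{Q_2,t}(x-w_2)$, and substitute $w_1=x-u-s\alpha$, $w_2=x-v-s\beta$. The Jacobian $s^{2d}$ cancels the factor $s^{-2d}$ in \eqref{eq:atom-blocks}, so
\[
\A_R(F_{Q_1,r},G_{Q_2,t})(x)=\frac{2R\,\mu_{Q_1}\nu_{Q_2}}{|\mathbb S^{2d-1}|R^{2d-1}}\int\phi(\alpha)\phi(\beta)\,
\delta\bigl(|x-u-s\alpha|^2+|x-v-s\beta|^2-R^2\bigr)\,d\alpha\,d\beta .
\]
Expanding the squares, the argument of $\delta$ equals
\[
\Phi_R(x;u,v)-2s\bigl((x-u)\cdot\alpha+(x-v)\cdot\beta\bigr)+s^2\bigl(|\alpha|^2+|\beta|^2\bigr).
\]
Now factor out $2sR$ and use $\delta(2sR\,\cdot)=(2sR)^{-1}\delta(\cdot)$. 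With
\[
y_1=\frac{x-u}{R},\qquad y_2=\frac{x-v}{R},\qquad h=\frac{\Phi_R(x;u,v)}{2sR},\qquad \epsilon=\frac sR,
\]
the argument becomes exactly the one in \eqref{eq:coarea-integral-definition}. Hence
\[
\A_R(F_{Q_1,r},G_{Q_2,t})(x)=\frac{\mu_{Q_1}\nu_{Q_2}}{|\mathbb S^{2d-1}|\,sR^{2d-1}}\,J_\epsilon(y_1,y_2,h)=\frac{J_\epsilon(y_1,y_2,h)}{|\mathbb S^{2d-1}|}.
\]
The second equality uses $\mu_{Q_1}\nu_{Q_2}=sR_j^{2d-1}$ from \eqref{eq:coefficient-product}. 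This bookkeeping of the delta scaling is the step I expect to need the most care, since all powers of $s$ and $R$ must cancel exactly. The integrals are absolutely convergent, either by the Fourier representation in the proof of Lemma~\ref{lem:J-positive} or by smoothing the delta and passing to the limit.

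\emph{Step 2 (checking the hypotheses of Lemma~\ref{lem:J-positive} with $k=2$).} First, \eqref{size Phi} gives $|h|\leq h_0/2\leq h_0$. Second, $0<\epsilon=s/R_j\leq\epsilon_0$, as shown just before the proposition.

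It remains to show $y\in K_2$, that is, $1/(4\sqrt2)\leq|y_i|\leq2$. From \eqref{ruv + -} we get $R/2\leq|x-z_e|\leq 3R/4$. From $|u-v|\leq2^{-3}R$ we get $|z_e-u|=|z_e-v|\leq R/16$. Therefore $|x-u|$ and $|x-v|$ lie in $[7R/16,\,13R/16]$, which gives $7/16\leq|y_i|\leq13/16$. Note that the directional cap condition in $S_e$ is not needed here; it is used only for disjointness in Lemma~\ref{lem:separation}.

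\emph{Step 3 (conclusion).} Lemma~\ref{lem:J-positive} gives $J_\epsilon(y_1,y_2,h)\geq c_\ast$. Hence the average is at least $c_0:=c_\ast/|\mathbb S^{2d-1}|$. This constant depends only on $d$ and $\phi$, so it is independent of $N$, $j$, $e$, the labels and the base points.
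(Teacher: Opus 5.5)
Your proposal is correct and is essentially the paper's own proof. You use the same coarea reduction to $J_\epsilon(y_1,y_2,h)$ with the same $y_1,y_2,h,\epsilon$, the same cancellation via $\mu_{Q_1}\nu_{Q_2}=sR_j^{2d-1}$, and the same checks $7/16\le|y_i|\le 13/16$, $|h|\le h_0/2$, $0<\epsilon\le\epsilon_0$ before applying Lemma~\ref{lem:J-positive} with $k=2$. The only differences are cosmetic: you phrase the spherical coarea step through $\delta(|w|-R)=2R\,\delta(|w|^2-R^2)$, and you verify the hypotheses after the reduction rather than before.
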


\begin{proof}
Fix $e=((Q_1,r),(Q_2,t))\in\mathcal E_j$ and $x\in S_e$. Set
$
 R=R_j,\ u=u_{Q_1,r},\ v=v_{Q_2,t},\
 \mu=\mu_{Q_1},\ \nu=\nu_{Q_2},
$
and put
$$
 y_1=\frac{x-u}{R},\qquad y_2=\frac{x-v}{R},\qquad
 h=\frac{\Phi_R(x;u,v)}{2sR},\qquad \epsilon=\frac{s}{R}.
$$
We first prove that
\begin{equation}\label{n1 n2 h epsilon}
 \frac14\leq |y_1|,|y_2|\leq2,
 \qquad |h|\leq\frac{h_0}{2},
 \qquad 0<\epsilon\leq\epsilon_0.
\end{equation}
Write $x=z_e+\lambda\zeta$ as in \eqref{eq:atom-pair-region}. Then
\eqref{ruv + -} gives $R/2\leq\lambda\leq3R/4$. Since $|u-v|\leq R/8$ and
$$
 x-u=\lambda\zeta+(v-u)/2,\qquad
 x-v=\lambda\zeta-(v-u)/2,
$$
the triangle inequality bounds both $|y_1|$ and $|y_2|$
between $7/16$ and $13/16$. The bound on $h$ follows from
\eqref{size Phi}, and $\epsilon\leq\theta^2\leq\epsilon_0$ by
\eqref{eq:fit-identity} and $R\geq R_0$. This proves \eqref{n1 n2 h epsilon}.

We now compute the average exactly.
Changing variables $p=x-Ry$, $q=x-Rz$ on $\mathbb S^{2d-1}$, and
then applying the coarea formula, we obtain
\begin{align*}
 \A_R(F_{Q_1,r},G_{Q_2,t})(x)
 &=\frac1{\mathcal H^{2d-1}(\mathbb S^{2d-1})R^{2d-1}}
 \int_{|x-p|^2+|x-q|^2=R^2}
 F_{Q_1,r}(p)G_{Q_2,t}(q)\,
 d\mathcal H^{2d-1}(p,q)\\
 &=\frac2{\mathcal H^{2d-1}(\mathbb S^{2d-1})R^{2d-2}}
 \iint F_{Q_1,r}(p)G_{Q_2,t}(q)
 \delta(\Phi_R(x;p,q))\,dp\,dq.
\end{align*}
The second identity follows because
$|\nabla_{p,q}\Phi_R|=2R$ on the level set.

Substitute $p=u+s\alpha$ and $q=v+s\beta$.
The factors $s^{-2d}$ from the two blocks and $s^{2d}$
from $dp\,dq$ cancel. Then
\begin{align*}
 \Phi_R(x;u+s\alpha,v+s\beta)
 &=\Phi_R(x;u,v)
 -2sR(y_1\cdot\alpha+y_2\cdot\beta)
 +s^2(|\alpha|^2+|\beta|^2)\\
 &=2sR\left(
 h-y_1\cdot\alpha-y_2\cdot\beta
 +\frac{\epsilon}{2}(|\alpha|^2+|\beta|^2)
 \right).
\end{align*}

The factor $(2sR)^{-1}$ from the delta distribution therefore gives
\begin{equation}\label{eq:exact-coarea}
 \A_R(F_{Q_1,r},G_{Q_2,t})(x)
 =\frac{\mu\nu}{\mathcal H^{2d-1}(\mathbb S^{2d-1})sR^{2d-1}}
 J_\epsilon(y_1,y_2,h),
\end{equation}
where $J_\epsilon(y_1,y_2,h)=J^{(2)}_\epsilon((y_1,y_2),h)$ is the
integral in \eqref{eq:coarea-integral-definition}.

By \eqref{n1 n2 h epsilon}, we have $(y_1,y_2)\in K_2$, since
$1/4\geq1/(4\sqrt2)$, and also $|h|\leq h_0/2$ and
$0<\epsilon\leq\epsilon_0$. Lemma~\ref{lem:J-positive} with $k=2$
therefore gives $J_\epsilon(y_1,y_2,h)\geq c_\ast$, and
\eqref{eq:exact-coarea} implies
$$
 \A_R(F_{Q_1,r},G_{Q_2,t})(x)
 \geq
 \frac{c_\ast}{\mathcal H^{2d-1}(\mathbb S^{2d-1})}
 \frac{\mu\nu}{sR^{2d-1}}.
$$
Since $e\in\mathcal E_j$, \eqref{eq:coefficient-product} gives
$\mu\nu=sR^{2d-1}$. This proves the proposition with
$c_0=c_\ast/\mathcal H^{2d-1}(\mathbb S^{2d-1})$.
\end{proof}

\subsubsection{Disjointness for a fixed pair of cubes}

\begin{lemma}\label{lem:separation}
Let
$e=((Q_1,r),(Q_2,t))$ and
$e'=((Q_1,r'),(Q_2,t'))$ belong to $\mathcal E_j$, with
$(r,t)\neq(r',t')$.  Then $S_e\cap S_{e'}=\varnothing$.
\end{lemma}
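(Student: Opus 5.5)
The plan is to compare the defining quadratic $\Phi_{R_j}$ at the two atom pairs rather than the two shells directly. Suppose $x\in S_e\cap S_{e'}$. By \eqref{size Phi},
$$|\Phi_{R}(x;u,v)-\Phi_{R}(x;u',v')|\le 2h_0sR\le sR,\qquad R=R_j,$$
where $u=u_{Q_1,r}$, $v=v_{Q_2,t}$, $u'=u_{Q_1,r'}$ and $v'=v_{Q_2,t'}$. I will show that this difference is in fact at least $c\,sR$ with $c>1$, which gives a contradiction.

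\emph{Expanding the difference.} Since only the last coordinates of the centers move,
$$\Delta u:=u'-u=2^6n_{Q_2}\ldots$$
More precisely, with $n=n_{Q_1}$ (the spacing factor in $u_{Q_1,\cdot}$), $k=r'-r$ and $l=t'-t$, we have $\Delta u=2^6nks\,e_d$ and $\Delta v:=v'-v=2^4ls\,e_d$. Moreover $|l|<n_{Q_2}$. Expanding $|x-u'|^2=|x-u|^2-2(x-u)\cdot\Delta u+|\Delta u|^2$, and likewise for $v$, gives
$$\Phi_R(x;u,v)-\Phi_R(x;u',v')=2(x-u)_d\Delta u_d+2(x-v)_d\Delta v_d-|\Delta u|^2-|\Delta v|^2 .$$
Write $x=z_e+\lambda\zeta$ and $A=\lambda\zeta_d$. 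Then $(x-u)_d=A+\tfrac12(v-u)_d$ and $(x-v)_d=A-\tfrac12(v-u)_d$. The main term is therefore $2A(\Delta u+\Delta v)_d$, and the errors are $(v-u)_d(\Delta u-\Delta v)_d$ and the squares.

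\emph{Estimates.} From $|\zeta-e_d|\le2^{-5}$ and $R/2\le\lambda\le3R/4$ (as in the proof of Proposition~\ref{cor:positive-region}), we get $A\ge \tfrac12(1-2^{-10})R$. We also have $|v-u|\le R/8$, and, by \eqref{eq:centers-diameter}, $|\Delta u|,|\Delta v|\le2^6\theta^2R_0\le2^{-4}R$.

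The key combinatorial point is the choice of spacings. First, $\sigma:=|\Delta u+\Delta v|/s=|2^6nk+2^4l|$ is at least $16$: if $k=0$ this holds since $l\ne0$, and if $k\ne0$ it is at least $64n|k|-16n\ge48n|k|$. Second,
$$|\Delta u|+|\Delta v|\le\tfrac53\sigma s.$$
Here I must check that the relevant $n$ bounds $|l|$. The spacing in $u_{Q_1,\cdot}$ uses $n_{Q_1}$, while $t$ ranges over $n_{Q_2}$ values, and these differ in general. I would handle this first: either the definition intends $n_{Q_1}$ with $Q_1,Q_2$ in the same cube (here they are not), or one uses that along a selected pair $n_{Q_2}\le n_{Q_1}$, since $Z_{Q_1}-Z_{Q_2}=j>0$ gives $n_{Q_1}>n_{Q_2}$. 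The latter is exactly what is needed, so I will invoke \eqref{eq:selection-condition} and \eqref{eq:atom-numbers} here.

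\emph{Conclusion.} Combining the estimates,
$$|\Phi_R(x;u,v)-\Phi_R(x;u',v')|\ge \sigma sR\bigl[(1-2^{-10})-\tfrac18\cdot\tfrac53-2\cdot2^{-4}\cdot\tfrac53\bigr]\ge\tfrac12\sigma sR\ge 8sR .$$
This exceeds $2h_0sR\le sR$, so no such $x$ exists and $S_e\cap S_{e'}=\varnothing$.

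The main obstacle is the index bookkeeping in the second step. One must confirm that the spacing $2^6n_{Q_1}s$ dominates the full range $2^4n_{Q_2}s$ of the $v$-column for a selected pair, so that $(r,t)\mapsto\Delta u+\Delta v$ cannot nearly cancel. The rest is the routine expansion above.
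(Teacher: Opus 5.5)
Your proof is correct, but it takes a different route from the paper's.

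\textbf{The paper's route.} The paper writes the upper hemisphere of $\{\Phi_{R_j}(\cdot;u,v)=0\}$ as a graph $\Gamma_{u,v}(x')$ over the horizontal domain $D_Q$. It proves $\tfrac14\le\partial_{u_d}\Gamma_{u,v},\partial_{v_d}\Gamma_{u,v}\le\tfrac34$, which uses $H\ge R_j/2$ on $D_Q$. It then shows that every vertical section of $S_e$ lies within $s$ of its graph. Finally, it separates the graphs of the different atom pairs by the mean value theorem.

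\textbf{Your route.} You test a hypothetical common point $x$ against both defining inequalities. You expand $\Phi_{R_j}(x;u,v)-\Phi_{R_j}(x;u',v')$ exactly, and the main term $2\lambda\zeta_d(\Delta u+\Delta v)_d$ plays the role of the paper's graph derivatives. The error terms $(v-u)_d(\Delta u-\Delta v)_d$ and $|\Delta u|^2+|\Delta v|^2$ are handled correctly by your bound $|\Delta u|+|\Delta v|\le\tfrac53\sigma s$. Your constants also close: $(1-2^{-10})-\tfrac5{24}-\tfrac{10}{48}>\tfrac12$, and $\tfrac12\sigma sR\ge 8sR>2h_0sR$.

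\textbf{The common core.} Both arguments rest on the same combinatorial fact. The spacing $2^6n_{Q_1}s$ of the $u$-column exceeds the full length $2^4n_{Q_2}s$ of the $v$-column, because $n_{Q_2}=n_{Q_1}\mathcal L^{-(2d-1)j}$ on a selected pair. Your resolution of the $n_{Q_1}$ versus $n_{Q_2}$ issue via \eqref{eq:selection-condition} is exactly the paper's. The hedging in your write-up is unnecessary, since the definition of $u_{Q_1,r}$ does use $n_{Q_1}$.

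\textbf{What each version gives.} Your version is shorter. It needs no graph parametrization, no domain $D_Q$, no lower bound on $H$, and no mean value theorem along segments. It also uses only $|u-v|\le R_j/8$, not the alignment of $Q^L,Q^R$ in the $e_d$ coordinate. So it applies verbatim when $d=1$, without the adjustment made in Section~\ref{sec:dimension-one}, and it adapts directly to the $k$-linear setting of Lemma~\ref{lem:multi-separation}. The paper's version gives an explicit geometric picture in exchange: vertical sections lying in disjoint intervals about ordered graph heights, as in Figure~\ref{fig:height-ordering}.
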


\begin{proof}
Fix $(Q_1,Q_2)$ satisfying \eqref{eq:admissible-pair} and
\eqref{eq:selection-condition}.  As in
\eqref{eq:selected-atom-pair-count}, $m_{Q_1}n_{Q_2}=r_j$.
Recall $z_e$ and $r_{u,v}$ from \eqref{eq:pair-midpoint} and
\eqref{eq:pair-sphere-radius}, with $R=R_j$.
Write $x=(x',x_d)$ and $z_e=(z_e',z_{e,d})$.
We compare the upper spheres of the different atom pairs of this
cube pair at the same horizontal point $x'$.
For $x'\in\mathbb R^{d-1}$ with $|x'-z_e'|<r_{u,v}$, put
$$
 \Gamma_{u,v}(x')
 =z_{e,d}+\sqrt{r_{u,v}^2-|x'-z_e'|^2}.
$$
The upper half of the sphere $\Phi_{R_j}(x;u,v)=0$ is the graph of
$\Gamma_{u,v}$.

Consider the common horizontal domain
$$
 D_Q=\left\{x'\in\mathbb R^{d-1}:
 |x'-c_Q'|\leq{5\cdot2^{-4}}R_j\right\}.
$$
For every $e\in\mathcal E_{j,Q}$, the auxiliary cubes give
$$
 |u-c_Q|,|v-c_Q|
 \leq\left(\frac1{\sqrt2}+\frac{\sqrt d\,\tau}{2}\right)
 2^{-4}R_j<2^{-4}R_j,
$$
so $|z_e'-c_Q'|\leq2^{-4}R_j$. If $x=z_e+\lambda\zeta\in S_e$,
then \eqref{ruv + -} gives $\lambda\leq R_j$, and the cap condition
gives $|\zeta'|\leq2^{-5}$. Thus
$$
 |x'-c_Q'|\leq |z_e'-c_Q'|+\lambda|\zeta'|
 \leq3\cdot2^{-5}R_j<5\cdot2^{-4}R_j.
$$
Hence $D_Q$ contains the horizontal projection of every such $S_e$.

The cubes $Q^L,Q^R$ give
$|z_e'-c_Q'|\leq{2^{-4}} R_j$ and $|u-v|\leq{2^{-3}} R_j$.
On this domain $D_Q$, put $H=(r_{u,v}^2-|x'-z_e'|^2)^{1/2}$.  Then
$$
 H^2\geq
 {91\cdot2^{-8}}R_j^2
 \geq\frac14R_j^2.
$$
These bounds also hold when $u$ moves along a line segment in
$Q^L$, or $v$ moves along a line segment in $Q^R$, since
both cubes are convex.
For $x'\in D_Q$, differentiation gives
$$
 \partial_{u_d}\Gamma_{u,v}(x')
 =\frac12+\frac{v_d-u_d}{4H},
 \qquad
 \partial_{v_d}\Gamma_{u,v}(x')
 =\frac12+\frac{u_d-v_d}{4H}.
$$
The centers of $Q^L,Q^R$ have the same $d$-th coordinate,
and both cubes have side ${2^{-4}}\tau R_j$. Hence
$|u_d-v_d|\leq{2^{-4}}\tau R_j$.
Together with $H\geq R_j/2$, this gives
$$
 \frac{|u_d-v_d|}{4H}
 \leq\frac{{2^{-4}}\tau R_j}{4(R_j/2)}
 ={2^{-5}}\tau
 \leq\frac14.
$$
Therefore
\begin{equation}\label{eq:graph-derivatives}
 \frac14\leq\partial_{u_d}\Gamma_{u,v}(x')\leq\frac34,
 \qquad
 \frac14\leq\partial_{v_d}\Gamma_{u,v}(x')\leq\frac34.
\end{equation}

We next bound the vertical distance from a point of $S_e$
to its upper sphere.
Take $x\in S_e$, and write
$x=(x',x_d)=z_e+\lambda\zeta$.
The projection estimate above gives $x'\in D_Q$, so
$H\geq R_j/2$.
Moreover,
$
 \zeta_d
 =1-|\zeta-e_d|^2/2
 \geq1- {2^{-11}}>0,
$
and hence $x_d-z_{e,d}=\lambda\zeta_d>0$.

Since $\Gamma_{u,v}(x')=z_{e,d}+H$, we have
\begin{align*}
 \Phi_{R_j}(x;u,v)
 &=2\bigl((x_d-z_{e,d})^2-H^2\bigr)
 =2\bigl(x_d-\Gamma_{u,v}(x')\bigr)(x_d-z_{e,d}+H).
\end{align*}
Note that
$
 2(x_d-z_{e,d}+H)\geq2H\geq R_j.
$
Therefore, combining with \eqref{size Phi}, we obtain 
$$
 |x_d-\Gamma_{u,v}(x')|
 =\frac{|\Phi_{R_j}(x;u,v)|}{2(x_d-z_{e,d}+H)}
 \leq h_0s
 \leq s.
$$
Thus, for each fixed $x'\in D_Q$, the vertical section of $S_e$
is contained in
$$
 [\,\Gamma_{u,v}(x')-s,\ \Gamma_{u,v}(x')+s\,].
$$

We now compare the two graphs. Figure~\ref{fig:height-ordering}
illustrates the two cases below. The selection condition
$Z_{Q_1}-Z_{Q_2}=j$ gives
$
 n_{Q_2}=n_{Q_1}{\mathcal L^{-(2d-1)j}}\leq n_{Q_1}.
$
If $r=r'$ and $t\neq t'$,
\eqref{eq:graph-derivatives} and the mean value theorem show that the
vertical distance between the graphs is at least $2^2s$.
If $r\neq r'$, interchange the pairs if necessary so that $r>r'$.
Since $|t-t'|<n_{Q_2}\leq n_{Q_1}$,
\begin{align*}
 \big|\Gamma_{u_{Q_1,r},v_{Q_2,t}}-\Gamma_{u_{Q_1,r'},v_{Q_2,t'}}\big|
 &\geq
 \big|\Gamma_{u_{Q_1,r},v_{Q_2,t}}-\Gamma_{u_{Q_1,r'},v_{Q_2,t}}\big|-
 \big|\Gamma_{u_{Q_1,r'},v_{Q_2,t}}-\Gamma_{u_{Q_1,r'},v_{Q_2,t'}}\big|\\
 &\geq 2^4n_{Q_1}(r-r')s-3\cdot2^2|t-t'|s
 \geq2^2n_{Q_1}s>2s.
\end{align*}
At each $x'$, the two graph heights differ by more than $2s$,
while each vertical section lies within $s$ of its graph.
Hence $S_e\cap S_{e'}=\varnothing$, proving the lemma.
\end{proof}

\begin{figure}[htbp]
\centering
\begin{tikzpicture}[x=1cm,y=1cm,font=\scriptsize]
    \node[font=\small] at (3.0,4.85) {$r=r',\quad t>t'$};
  \draw[->,black!55] (1.5,.25)--(1.5,4.05)
    node[above] {$x_d$};
  \filldraw[fill=blue!12,draw=blue!65!black]
    (1.38,.74) rectangle (1.62,1.16);
  \draw[blue!65!black] (1.28,.95)--(1.93,.95);
  \fill[blue!65!black] (1.5,.95) circle (1.2pt);
  \node[anchor=west,text=blue!65!black] at (2.0,.95)
    {$\Gamma_{u_{Q_1,r},v_{Q_2,t'}}(x')$};
  \filldraw[fill=orange!12,draw=orange!85!black]
    (1.38,3.04) rectangle (1.62,3.46);
  \draw[orange!85!black] (1.28,3.25)--(1.93,3.25);
  \fill[orange!85!black] (1.5,3.25) circle (1.2pt);
  \node[anchor=west,text=orange!85!black] at (2.0,3.25)
    {$\Gamma_{u_{Q_1,r},v_{Q_2,t}}(x')$};
  \draw[<->,black!60] (.95,.74)--(.95,1.16)
    node[midway,left] {$2s$};
  \draw[<->,black!60] (.95,3.04)--(.95,3.46)
    node[midway,left] {$2s$};
  \draw[<->,black!60] (1.85,1.16)--(1.85,3.04);
  \node[anchor=west,text=black!60] at (1.95,2.10)
    {disjoint intervals};
  \node[align=center] at (3.0,-.15)
    {Height difference $\geq2^2(t-t')s>2s$.};

    \node[font=\small] at (10.5,4.85) {$r>r'$};
  \draw[->,black!55] (9.0,.25)--(9.0,4.05)
    node[above] {$x_d$};
  \filldraw[fill=blue!12,draw=blue!65!black]
    (8.88,.74) rectangle (9.12,1.16);
  \draw[blue!65!black] (8.78,.95)--(9.43,.95);
  \fill[blue!65!black] (9.0,.95) circle (1.2pt);
  \node[anchor=west,text=blue!65!black] at (9.5,.95)
    {$\Gamma_{u_{Q_1,r'},v_{Q_2,t'}}(x')$};
  \filldraw[fill=orange!12,draw=orange!85!black]
    (8.88,2.79) rectangle (9.12,3.21);
  \draw[orange!85!black] (8.78,3.00)--(9.43,3.00);
  \fill[orange!85!black] (9.0,3.00) circle (1.2pt);
  \node[anchor=west,text=orange!85!black] at (9.5,3.00)
    {$\Gamma_{u_{Q_1,r},v_{Q_2,t}}(x')$};
  \draw[<->,black!60] (8.45,.74)--(8.45,1.16)
    node[midway,left] {$2s$};
  \draw[<->,black!60] (8.45,2.79)--(8.45,3.21)
    node[midway,left] {$2s$};
  \draw[<->,black!60] (9.35,1.16)--(9.35,2.79);
  \node[anchor=west,text=black!60] at (9.45,1.975)
    {disjoint intervals};
  \node[align=center] at (10.5,-.15)
    {Height difference $\geq2^2n_{Q_1}s>2s$.};
\end{tikzpicture}
\caption{Height comparisons at the same horizontal point $x'\in D_Q$.
Each shaded interval extends $s$ above and below
$\Gamma_{u,v}(x')$ and contains the vertical section of the
corresponding region.
}
\label{fig:height-ordering}
\end{figure}

\section{Counting the label assignments}
\label{sec:region-counting}
To obtain a union of positive measure, we need a lower bound
for the total measure of the regions, counted with multiplicity,
and an upper bound for their overlap.
We first count the label assignments that select a cube pair.
Section~\ref{sec:region-moments} then uses this count to estimate
the counting function and its square.

Fix $1\leq j\leq N$, and let $Q_1,Q_2\in\mathcal Q_0$ satisfy
$$
 Q_1^{(j)}=Q_2^{(j)},\qquad
 Q_1^{(j-1)}\ne Q_2^{(j-1)}.
$$
By \eqref{eq:label-difference}, all labels of their common containing
cubes cancel in $Z_{Q_1}-Z_{Q_2}$.
The remaining coordinates are the labels of the $2j$ distinct cubes
$Q_1^{(r-1)},Q_2^{(r-1)}$, $1\leq r\leq j$.
Summation over these coordinates reduces the count to the function
$$
 D_j(\alpha,\beta)=\sum_{r=1}^jL_r(\alpha_r-\beta_r),
 \qquad (\alpha,\beta)\in\{0,1\}^j\times\{0,1\}^j.
$$
For $k\in\mathbb Z$, put
\begin{equation}\label{eq:label-difference-density}
 \kappa_j(k)=4^{-j}
 \#\{(\alpha,\beta)\in\{0,1\}^{2j}:D_j(\alpha,\beta)=k\}.
\end{equation}
Thus $\Av_\omega \one_{\{Z_{Q_1}-Z_{Q_2}=k\}}=\kappa_j(k)$ for
such a pair $(Q_1,Q_2)$.

The next lemma gives a local limit estimate for $D_j$.
 We include a short proof, since the weights $L_r$
are unbounded and we only need the two-sided bound for $j\kappa_j(j)$
in Lemma~\ref{lem:local-limit}.

\begin{lemma}\label{prop:lattice-fourier-estimate}
For $\kappa_j$ defined in \eqref{eq:label-difference-density}, put
$\sigma_j^2=\frac12\sum_{r=1}^jL_r^2$. Then
$$
 \sup_{k\in\mathbb Z}
 \left|
 \sigma_j\kappa_j(k)
 -\frac1{\sqrt{2\pi}}e^{-k^2/(2\sigma_j^2)}
 \right|\longrightarrow0
$$
as $j\to\infty$.
In particular,
$
 \kappa_j(j)\sim\frac{2e^{-2}}{\sqrt{2\pi}}\,\frac1j
$
as $j\to\infty$.
\end{lemma}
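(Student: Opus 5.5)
We will prove the local limit statement by Fourier inversion on the lattice $\mathbb Z$. Since $D_j=\sum_{r=1}^jL_r(\alpha_r-\beta_r)$ is a sum of independent variables (under the uniform measure) and each $\alpha_r-\beta_r$ takes the values $-1,0,1$ with probabilities $\frac14,\frac12,\frac14$, the characteristic function is
$$
 \widehat\kappa_j(\xi)=\sum_k\kappa_j(k)e^{ik\xi}=\prod_{r=1}^j\cos^2\!\Bigl(\frac{L_r\xi}{2}\Bigr)\ \ge0 ,
$$
and the variance is $\sum_r L_r^2/2=\sigma_j^2$. Note $\sigma_j^2\approx j^2/4$, since $L_r^2\approx r$. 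Inversion gives $\kappa_j(k)=\frac1{2\pi}\int_{-\pi}^{\pi}\widehat\kappa_j(\xi)e^{-ik\xi}\,d\xi$. After substituting $\xi=\eta/\sigma_j$, we compare this with $\frac1{2\pi}\int_{\mathbb R}e^{-\eta^2/2}e^{-ik\eta/\sigma_j}\,d\eta=\frac{\sigma_j}{\sqrt{2\pi}}e^{-k^2/(2\sigma_j^2)}$. The sup over $k$ is then bounded by
$$
\int_{|\eta|\le\pi\sigma_j}\bigl|\widehat\kappa_j(\eta/\sigma_j)-e^{-\eta^2/2}\bigr|\,d\eta+\int_{|\eta|>\pi\sigma_j}e^{-\eta^2/2}\,d\eta .
$$
We split the first integral into three ranges.

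\emph{Small $\xi$.} Take $|\eta|\le A$ fixed, or more generally $|\xi|\le\delta/L_j$ so that every $L_r\xi/2$ is small. A Taylor expansion of $\log\cos^2(x)=-x^2+O(x^4)$ gives
$$
\log\widehat\kappa_j(\eta/\sigma_j)=-\frac{\eta^2}{2}+O\Bigl(\eta^4\sum_rL_r^4/\sigma_j^4\Bigr).
$$
Here $\sum_r L_r^4\approx j^3$ and $\sigma_j^4\approx j^4$, so the error is $O(\eta^4/j)\to0$. Dominated convergence handles $|\eta|\le\delta\sigma_j/L_j\approx\delta\sqrt j$, using the bound $\widehat\kappa_j\le e^{-c\eta^2}$ that holds there since $\cos^2x\le e^{-x^2}$ for $|x|\le\pi/2$.

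\emph{Intermediate and large $\xi$.} The main obstacle is the range $\delta/L_j\le|\xi|\le\pi$, because the weights are unbounded and $\cos^2(L_r\xi/2)$ is close to $1$ whenever $L_r\xi$ is near $2\pi\mathbb Z$. The fix is to exploit that each integer value $m$ is attained by $L_r=\lfloor\sqrt r\rfloor$ for about $2m$ consecutive indices $r$, so the multiset of weights contains every $m\le\sqrt j$ with multiplicity $\approx m$. For a fixed $\xi$ with $|\xi|\ge\delta/\sqrt j$, consider the weights $m\in[\sqrt j/2,\sqrt j]$. A positive proportion of these $m$ satisfy $\|m\xi/(2\pi)\|\ge c(\delta)$, where $\|\cdot\|$ is the distance to the nearest integer. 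This holds because, among two consecutive integers $m,m+1$, at least one has $\|m\xi/2\pi\|\ge\|\xi/2\pi\|/2$; for $|\xi|$ not tiny this fraction is bounded below, while for $\delta/\sqrt j\le|\xi|\le c$ the values $m\xi$ spread across an interval of length $\gtrsim\delta$, and a direct count works. Each such $m$ contributes a factor $\le(1-c')^{m}$, so
$$
\widehat\kappa_j(\xi)\le\exp\bigl(-c''\textstyle\sum_{m\approx\sqrt j}m\bigr)=e^{-c''j}
$$
uniformly in this range. After the change of variables this range contributes $O(\sigma_j e^{-c''j})\to0$. The Gaussian tail term trivially tends to $0$. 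Combining the ranges and letting $\delta\to0$ after $j\to\infty$ yields the uniform statement.

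\emph{The consequence.} Put $k=j$. Since $\sigma_j^2=\frac12\sum_{r\le j}\lfloor\sqrt r\rfloor^2=\frac{j^2}{4}(1+o(1))$, we have $j^2/(2\sigma_j^2)\to2$ and $\sigma_j\sim j/2$. Therefore
$$
\kappa_j(j)\sim\frac{e^{-2}}{\sqrt{2\pi}\,\sigma_j}\sim\frac{2e^{-2}}{\sqrt{2\pi}}\,\frac1j .
$$
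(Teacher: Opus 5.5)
Your proposal is correct, and its overall structure matches the paper's proof. Both use lattice Fourier inversion with $\prod_r\cos^2(L_r\xi/2)$, rescale by $\sigma_j$, and reduce to an $L^1$ bound. Both split at $|\xi|\approx\delta/\sqrt j$, and on the inner range both use the Taylor expansion, the majorant $\cos^2x\le e^{-x^2}$, and dominated convergence.

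The genuine difference is the tail range $\delta/\sqrt j\lesssim|\xi|\le\pi$.
\begin{itemize}
\item The paper discards the multiplicities $2q+1$ of the weights and keeps only $\exp\bigl(-c\sum_{q\le M_j}\|q\xi/2\pi\|_{\mathbb R/\mathbb Z}^2\bigr)$. It then proves the general spacing inequality $\sum_{q\le M}\|q\alpha\|_{\mathbb R/\mathbb Z}^2\ge cM\min\{1,M^2\|\alpha\|_{\mathbb R/\mathbb Z}^2\}$ by a three-case argument. This yields only $e^{-c\sqrt j}$, which suffices because $\sigma_j\approx j$.
\item You keep the multiplicities and use only the top weights $m\in[\sqrt j/2,\sqrt j]$. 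There $m\|\alpha\|\ge\delta/(4\pi)$ automatically, where $\alpha=\xi/(2\pi)$. This makes the counting much simpler and gives the stronger bound $e^{-c(\delta)j}$.
\end{itemize}

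Your ``direct count'' should be written out, but it does go through. Note that $\|\alpha\|=|\xi|/(2\pi)\ge\delta/(2\pi\sqrt j)$.
\begin{itemize}
\item If $\sqrt j\,\|\alpha\|\le1/2$, then $\|m\alpha\|=m\|\alpha\|\ge\delta/(4\pi)$ for every such $m$.
\item If $\sqrt j\,\|\alpha\|>1/2$ and $\|\alpha\|\le c_1$, the progression $m\|\alpha\|$ has step at most $c_1$ and spans a length greater than $1/4$. Counting at most $2c/\|\alpha\|+1$ points per window around an integer, at most a fraction $O(c+c_1+j^{-1/2})$ of the points lie within $c$ of an integer.
\item If $\|\alpha\|>c_1$, your consecutive-pair argument applies.
\end{itemize}
In each case a positive proportion of these $m$ have $\|m\alpha\|\ge c(\delta)$. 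Each such $m$ carries multiplicity about $2m\gtrsim\sqrt j$, which gives the exponent of order $j$.

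Two small corrections. First, the Gaussian identity should read $\frac1{2\pi}\int_{\mathbb R}e^{-\eta^2/2}e^{-ik\eta/\sigma_j}\,d\eta=\frac1{\sqrt{2\pi}}e^{-k^2/(2\sigma_j^2)}$, with no factor $\sigma_j$. It is compared with $\sigma_j\kappa_j(k)$, not $\kappa_j(k)$. Second, there is no need to let $\delta\to0$. Any fixed $\delta\le1$ already makes all three pieces tend to zero.
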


\begin{proof}
Since $L_r^2=r+O(\sqrt r)$, summation gives  $
 \sigma_j^2
 =\frac{j^2}{4}+O(j^{3/2}).$
Hence
\begin{equation}\label{eq:label-variance1}
 \sigma_j
 =\frac{j}{2}+O(j^{1/2}).
\end{equation}

Direct summation over the binary coordinates gives
\begin{align}\label{eq:label-characteristic-function}
 \varphi_j(t)&:=\sum_{k\in\mathbb Z}\kappa_j(k)e^{ikt}
 =\frac{1}{4^j}\sum_{(\alpha,\beta)\in\{0,1\}^{2j}}
   e^{it \sum_{r=1}^jL_r (\alpha_r-\beta_r)}
 =\prod_{r=1}^j\frac{2+e^{iL_rt}+e^{-iL_rt}}4\notag\\
 &=\prod_{r=1}^j\cos^2\left(\frac{L_rt}{2}\right).
\end{align}

Fourier inversion therefore gives
$$
 \kappa_j(k)=\frac1{2\pi}
 \int_{-\pi}^{\pi}e^{-ikt}\varphi_j(t)\,dt,\qquad k\in\mathbb Z. 
$$
Define
$
 \Psi_j(u)=\one_{\{|u|\leq\pi\sigma_j\}}
 \varphi_j(u/\sigma_j).
$
Changing variables by $u=\sigma_jt$, we obtain
\begin{align*}
 \sigma_j\kappa_j(k)
 &=\frac1{2\pi}\int_{\mathbb R}
 e^{-i(k/\sigma_j)u}\Psi_j(u)\,du.
\end{align*}
The Gaussian Fourier transform gives 
$$ 
\frac1{\sqrt{2\pi}}e^{-k^2/(2\sigma_j^2)}
 =\frac1{2\pi}\int_{\mathbb R}
 e^{-i(k/\sigma_j)u}e^{-u^2/2}\,du.
$$
Since the phase $e^{-i(k/\sigma_j)u}$ has absolute value one,
$$
 \sup_{k\in\mathbb Z}
 \left|\sigma_j\kappa_j(k)
 -\frac1{\sqrt{2\pi}}e^{-k^2/(2\sigma_j^2)}\right|
 \leq\frac1{2\pi}
 \|\Psi_j-e^{-u^2/2}\|_{L^1(\mathbb R)}.
$$
It is thus enough to prove that this $L^1$ norm tends to zero.
Fix $0<\beta\leq1$.  Splitting at $|u|=\beta\sigma_j/\sqrt j$
and applying the triangle inequality outside this interval gives
\begin{align*}
 \int_{\mathbb R}|\Psi_j(u)-e^{-u^2/2}|\,du
 &\leq\int_{|u|\leq \beta\sigma_j/\sqrt j}
 |\Psi_j(u)-e^{-u^2/2}|\,du\\
 &\qquad+\int_{|u|>\beta\sigma_j/\sqrt j}|\Psi_j(u)|\,du
 +\int_{|u|>\beta\sigma_j/\sqrt j}e^{-u^2/2}\,du\\
 &=:T_1+T_2+T_3.
\end{align*}
The Gaussian tail $T_3$ tends to zero because
$\beta\sigma_j/\sqrt j\to\infty$. We estimate the other two terms below.

\medskip
\noindent\emph{Estimate for $T_1$.}  For every fixed $A>0$, we show that
$\varphi_j(u/\sigma_j)\to e^{-u^2/2}$ uniformly on $[-A,A]$.
Since $\pi\sigma_j\to\infty$, this also gives pointwise convergence
of $\Psi_j$ on $\mathbb R$.

Taylor expansion gives
\begin{equation}\label{log formula}
 \log\cos^2(v/2)=-\frac{v^2}{4}+O(v^4)
 \qquad(|v|\leq1).
\end{equation}
For $|u|\leq A$, we have
$\max_{r\leq j}L_r|u|/\sigma_j=O_A(j^{-1/2})$ by
\eqref{eq:label-variance1}. Applying \eqref{log formula} to each
factor in \eqref{eq:label-characteristic-function} gives
$$
 \log\varphi_j(u/\sigma_j)
 =-\frac{u^2}{4\sigma_j^2}\sum_{r=1}^jL_r^2
   +O\left(\frac{u^4}{\sigma_j^4}\sum_{r=1}^jL_r^4\right)
 =-\frac{u^2}{2}+O_A(j^{-1}).
$$
Here $\sum_rL_r^2=2\sigma_j^2$ and
$\sum_{r\leq j}L_r^4\leq\sum_{r\leq j}r^2=O(j^3)$.
Exponentiating proves the uniform convergence on $[-A,A]$.

We next find an integrable majorant.
For $|v|\leq1$,
$$
 -\log\cos^2(v/2)
 =2\int_0^{|v|/2}\tan w\,dw
 \geq2\int_0^{|v|/2}w\,dw=\frac{v^2}{4}.
$$
Thus $\cos^2(v/2)\leq e^{-v^2/4}$ for $|v|\leq1$.

If $|t|\leq \beta j^{-1/2}$, then $|L_rt|\leq \beta\leq1$
for all $r\leq j$.  Multiplying the bounds for the individual
factors gives
$$
 |\varphi_j(t)| = \prod_{r=1}^j \cos^2\Big( \frac{L_rt}{2} \Big)
 \leq\exp\left(-\frac{t^2}{4}\sum_{r=1}^jL_r^2\right)
 \leq e^{-\frac{\sigma_j^2t^2}{2}}.
$$
Recalling that $u=t\sigma_j$, we have
$$
 |\Psi_j(u)|\leq e^{-\frac{u^2}{2}}
 \qquad\left(|u|\leq\frac{\beta\sigma_j}{\sqrt j}\right).
$$
Hence
$$
\one_{\{|u|\leq \beta\sigma_j/\sqrt j\}}|\Psi_j(u)-e^{-u^2/2}|\leq 2e^{-u^2/2}.
$$
Dominated convergence therefore gives
$$
 \int_{|u|\leq \beta\sigma_j/\sqrt j}
 |\Psi_j(u)-e^{-u^2/2}|\,du\longrightarrow0.
$$
This proves the required estimate for $T_1$.

\medskip
\noindent\emph{Estimate for $T_2$.} By the definition of $\Psi_j$ and the change of variables
$u=\sigma_jt$,
$$
 \int_{|u|>\beta\sigma_j/\sqrt j}|\Psi_j(u)|\,du
 =\sigma_j\int_{\beta/\sqrt j<|t|\leq\pi}
 |\varphi_j(t)|\,dt
 \leq2\pi\sigma_j
 \sup_{\beta/\sqrt j\leq|t|\leq\pi}|\varphi_j(t)|.
$$
Since $\sigma_j\approx j$, to show that $T_2$ tends to zero it is
enough to find $\tilde\beta>0$ such that, for all sufficiently
large $j$,
\begin{equation}\label{eq:tail-decay}
 \sup_{\beta j^{-1/2}\leq|t|\leq\pi}
 |\varphi_j(t)|\leq e^{-\tilde\beta \sqrt j}.
\end{equation}

For sufficiently large $j$, put $M_j=\lfloor\sqrt j\rfloor-1$.
Each weight $q\leq M_j$ occurs $2q+1$ times, at the indices
$q^2\leq r<(q+1)^2$. These groups lie entirely in
$\{1,\ldots,j\}$. Discarding the remaining factors, which are at
most one, gives
\begin{equation}\label{varphi upper bound}
 |\varphi_j(t)|\leq\prod_{q=1}^{M_j}|\cos(qt/2)|^{4q+2}.
\end{equation}
For $x\in\mathbb R$, let
$\|x\|_{\mathbb R/\mathbb Z}=\min_{n\in\mathbb Z}|x-n|$.
We claim that 
\begin{equation}\label{cos upper bound}
 |\cos(\pi x)|^2
 \leq e^{-\pi^2\|x\|_{\mathbb R/\mathbb Z}^2}.
\end{equation}
Both sides are one-periodic and even, so it suffices to consider
$0\leq x\leq1/2$. For $0\leq x<1/2$, we have
$\|x\|_{\mathbb R/\mathbb Z}=x$ and
$$
 -\log|\cos(\pi x)|^2
 =2\int_0^{\pi x}\tan v\,dv
 \geq2\int_0^{\pi x}v\,dv
 =\pi^2x^2.
$$
Here we used $\tan v\geq v$ for $0\leq v<\pi/2$.
Taking exponentials proves \eqref{cos upper bound}.
At $x=1/2$, the cosine is zero, so the bound also holds.

Consequently, from \eqref{varphi upper bound} and \eqref{cos upper bound}
\begin{align}\label{eq:grouped-product}
 |\varphi_j(t)|
  &\leq\exp\left(-c\sum_{q=1}^{M_j}(2q+1)
 \left\|\frac{qt}{2\pi}\right\|_{\mathbb R/\mathbb Z}^2\right)
 \leq\exp\left(-c\sum_{q=1}^{M_j}
 \left\|\frac{qt}{2\pi}\right\|_{\mathbb R/\mathbb Z}^2\right).
\end{align}
The last inequality uses $2q+1\geq1$. To bound the distance sum,
we prove
\begin{equation}\label{eq:spacing-estimate}
 \sum_{q=1}^M\|q\alpha\|_{\mathbb R/\mathbb Z}^2
 \geq cM\min\{1,M^2\|\alpha\|_{\mathbb R/\mathbb Z}^2\}
 \qquad(M\geq2,\ \alpha\in\mathbb R).
\end{equation}

Put $\delta=\|\alpha\|_{\mathbb R/\mathbb Z}\in[0,1/2]$.
Periodicity and evenness give $\|q\alpha\|=\|q\delta\|$
for every integer $q$, where the norms below denote distance
to the nearest integer.
We consider three cases.

\smallskip
\noindent\emph{Case 1: $M\delta\leq1/2$.} For $1\leq q\leq M$, we have $0\leq q\delta\leq1/2$,
so $\|q\alpha\|=q\delta$.  Hence
$$
 \sum_{q=1}^M\|q\alpha\|^2
 =\delta^2\frac{M(M+1)(2M+1)}6
 \geq\frac13M^3\delta^2
$$
since $M^2\delta^2\leq1/4$.  This case also includes $\delta=0$.

\smallskip
\noindent\emph{Case 2: $M\delta>1/2$ and $\delta>1/4$.}
The triangle inequality on $\mathbb R/\mathbb Z$ gives
$$
 \delta
 =\|(q+1)\alpha-q\alpha\|
 \leq\|q\alpha\|+\|(q+1)\alpha\|.
$$
Using $a^2+b^2\geq(a+b)^2/2$, we obtain
$$
 \|q\alpha\|^2+\|(q+1)\alpha\|^2
 \geq\frac{\delta^2}{2}\geq\frac1{32}.
$$
Thus, 
$$
 \sum_{q=1}^M\|q\alpha\|^2
 \geq\frac{\lfloor M/2\rfloor}{32}\geq\frac{M}{96},
$$
since $\lfloor M/2\rfloor\geq M/3$. This implies
\eqref{eq:spacing-estimate} in Case 2.

\smallskip
\noindent\emph{Case 3: $M\delta>1/2$ and $0<\delta\leq1/4$.}
We use blocks of $m=\lceil(2\delta)^{-1}\rceil$ terms. Then
$$
 2\leq m\leq M,
 \qquad
 \frac12\leq m\delta\leq\frac12+\delta\leq\frac34.
$$
We first show that every block of $m$ consecutive terms has
sum at least $cm$, independently of its initial point.

The one-periodic function $F(x)=\|x\|_{\mathbb R/\mathbb Z}^2$
is Lipschitz with constant one: its derivative in each $(n-1/2,n+1/2)$ is $2(x-n)$, and $F$ is
continuous at the endpoints.  Thus, for every $x\in\mathbb R$,
\begin{align*}
 \left|\delta\sum_{q=1}^mF(x+q\delta)
 -\int_x^{x+m\delta}F(y)\,dy\right|
 &\leq\sum_{q=1}^m\int_{x+(q-1)\delta}^{x+q\delta}
             |x+q\delta-y|\,dy
 =\frac{m\delta^2}{2}\leq\frac{3\delta}{8}.
\end{align*}
Since $m\delta\geq1/2$ and $F\geq0$, we obtain that 
$$
 \int_x^{x+m\delta}F(y)\,dy
 \geq\min_{a\in\mathbb R}\int_a^{a+1/2}F(y)\,dy =\int_{-1/4}^{1/4}F(y)\,dy
 =\int_{-1/4}^{1/4}y^2\,dy
 =\frac1{96}.
$$
Combining the integral lower bound with the error estimate gives
$$
 \delta\sum_{q=1}^mF(x+q\delta)
 \geq\frac1{96}-\frac{3\delta}{8}.
$$

If $0<\delta\leq1/72$, then $3\delta/8\leq1/192$, so the above inequality yields that 
$$
 \sum_{q=1}^mF(x+q\delta)
 \geq \bigg(\frac1{96}-\frac{3\delta}{8}\bigg)\frac{1}{\delta}
 \geq\frac1{192\delta}\geq\frac m{144}
$$
since $m\delta\leq3/4$.

If $1/72<\delta\leq1/4$, use instead
$
 \delta
 \leq\|x+q\delta\|+\|x+(q+1)\delta\|.
$
Each disjoint adjacent pair contributes at least
$\delta^2/2\geq1/(2\cdot72^2)$ to the sum of squares.
Since $\lfloor m/2\rfloor\geq m/3$, we obtain
$$
 \sum_{q=1}^mF(x+q\delta)
 \geq\frac{\lfloor m/2\rfloor}{2\cdot72^2}
 \geq\frac{m}{6\cdot72^2}.
$$

Thus, in both subcases,
$$
 \sum_{q=1}^mF(x+q\delta)\geq cm
 \qquad(x\in\mathbb R),
$$
where $c>0$ is absolute and independent of the initial point.

There are $\lfloor M/m\rfloor$ complete blocks, covering at least
$M/2$ indices because $m\leq M$. Apply the preceding bound to each
block with initial point $x=\ell m\delta$, and discard the
nonnegative remainder. This gives
$$
 \sum_{q=1}^M\|q\alpha\|^2
 \geq cm\lfloor M/m\rfloor\geq cM/2.
$$
Together with Cases 1 and 2, this proves
\eqref{eq:spacing-estimate} with an absolute constant.

Apply \eqref{eq:spacing-estimate} with $M=M_j$ and
$\alpha=t/(2\pi)$.  For $|t|\leq\pi$, we have
$\|t/(2\pi)\|=|t|/(2\pi)$, so
$$
 \sum_{q=1}^{M_j}
 \left\|\frac{qt}{2\pi}\right\|_{\mathbb R/\mathbb Z}^2
 \geq cM_j\min\{1,M_j^2t^2\}.
$$
Since $M_j\approx\sqrt j$, the condition
$|t|\geq \beta j^{-1/2}$ gives $M_j^2t^2\geq c\beta^2$
for large $j$.  Hence
$$
 \sum_{q=1}^{M_j}
 \left\|\frac{qt}{2\pi}\right\|_{\mathbb R/\mathbb Z}^2
 \geq \tilde\beta \sqrt j
 \qquad(\beta j^{-1/2}\leq|t|\leq\pi).
$$
Combining this lower bound with \eqref{eq:grouped-product} gives
$|\varphi_j(t)|\leq e^{-c\tilde\beta\sqrt j}$ for
$\beta j^{-1/2}\leq|t|\leq\pi$, which is \eqref{eq:tail-decay}
after renaming $\tilde\beta$.
This proves the required estimate for $T_2$.

Combining the three estimates gives
$$\|\Psi_j-e^{-u^2/2}\|_{L^1(\mathbb R)}\to0.$$
Substitution into the Fourier inversion bound proves the uniform
counting estimate in the lemma.
Finally, $j/\sigma_j\to2$, so we have
$$
 \sigma_j\kappa_j(j)\longrightarrow\frac{e^{-2}}{\sqrt{2\pi}},
 \qquad
 j\kappa_j(j)\longrightarrow\frac{2e^{-2}}{\sqrt{2\pi}}.
$$
This proves the stated asymptotic and completes the proof.
\end{proof}

For $j\geq1$, put
\begin{equation}\label{eq:cj-def}
 c_j=\kappa_j(j).
\end{equation}
By the identity following \eqref{eq:label-difference-density}, $c_j$
is the proportion of label choices for which a given pair of cubes
with first common ancestor at level $j$ satisfies
$Z_{Q_1}-Z_{Q_2}=j$.

\begin{lemma}\label{lem:local-limit}
There are absolute constants $c,C>0$ such that
\begin{equation}\label{eq:cj}
 \frac cj\leq c_j\leq\frac Cj
 \qquad (j\geq1).
\end{equation}
\end{lemma}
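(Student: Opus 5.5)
The plan is to combine the asymptotic from Lemma~\ref{prop:lattice-fourier-estimate} for large $j$ with a direct check that $c_j>0$ for each of the finitely many remaining $j$.

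\emph{Large $j$.} By Lemma~\ref{prop:lattice-fourier-estimate}, $j\kappa_j(j)\to 2e^{-2}/\sqrt{2\pi}=:\gamma>0$. Hence there is an absolute $j_0$ such that
\[
\frac{\gamma}{2}\le j c_j\le 2\gamma\qquad (j\ge j_0).
\]
This gives \eqref{eq:cj} for $j\ge j_0$ with $c=\gamma/2$ and $C=2\gamma$. The integer $j_0$ is absolute, because the weights $L_r=\lfloor\sqrt r\rfloor$ are fixed and do not depend on $d$, $N$, or $\mathcal L$.

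\emph{Small $j$, upper bound.} Since $c_j=\kappa_j(j)\le 1$, we have $jc_j\le j\le j_0$ for $1\le j<j_0$. Enlarging $C$ to $\max\{2\gamma,j_0\}$ therefore covers these $j$.

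\emph{Small $j$, lower bound.} It suffices to show $\kappa_j(j)>0$ for every $j\ge1$, i.e. to exhibit one $(\alpha,\beta)\in\{0,1\}^{2j}$ with $D_j(\alpha,\beta)=j$. Then $\kappa_j(j)\ge 4^{-j}$, so $jc_j\ge 4^{-j_0}$ for $j<j_0$, and we may take $c=\min\{\gamma/2,4^{-j_0}\}$. This existence is the only genuinely new point. Setting $\beta=0$, we need a subset $A\subset\{1,\dots,j\}$ with $\sum_{r\in A}L_r=j$. The weights appear in blocks, with value $q$ repeated $2q+1$ times on $q^2\le r<(q+1)^2$, so $L_r\le\sqrt r$ and $L_1=L_2=L_3=1$.

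We argue greedily. Include indices in decreasing order of $r$ while the running sum stays at most $j$. The total of all weights is $\sum_{r\le j}L_r\ge j$, since $\sum_{r\le j}\lfloor\sqrt r\rfloor\ge j$ holds once $j\ge1$ (each term is at least $1$). Every weight is at most $\lfloor\sqrt j\rfloor$. The leftover deficit can then be filled using indices with small weights, since there are at least three $1$'s, five $2$'s, and so on. A cleaner argument is to find $A$ explicitly: since $L_r\ge1$ for all $r$, taking $A=\{1,\dots,j\}$ gives sum $\ge j$, and removing indices one at a time lowers the sum by at most $\lfloor\sqrt j\rfloor$. To land exactly on $j$, first remove large-weight indices until the excess is below $\lfloor\sqrt j\rfloor$, then adjust using indices of weight $1$ or $2$.

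If this adjustment becomes fiddly, there is a simpler route: since there is no constraint tying $\alpha$ to $\beta$, we may use $\beta$ as well. Fix any $A$ whose weight sum is $S\ge j$ with $S-j$ achievable as a sum of distinct weights of indices inside $A$, and put those indices in both $\alpha$ and $\beta$ so that they cancel. The expected obstacle is only this elementary combinatorial verification for small $j$. Alternatively, one can simply note that positivity for $j<j_0$ amounts to finitely many explicit checks.
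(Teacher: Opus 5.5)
Your overall structure is the same as the paper's. You use the local limit Lemma~\ref{prop:lattice-fourier-estimate} for $j\ge j_0$, the trivial bound $c_j\le 1$ for the upper estimate at small $j$, and $\kappa_j(j)\ge 4^{-j}$ from a single witness with $\beta=0$. The gap is in the one step you flag as new: the existence of $I\subset\{1,\dots,j\}$ with $\sum_{r\in I}L_r=j$. None of your sketches proves it.

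\begin{itemize}
\item The greedy paragraph asserts that ``the leftover deficit can then be filled'' by small weights. That is the claim itself, not a proof of it.
\item The ``cleaner'' version removes large weights until the excess $E$ is below $\lfloor\sqrt j\rfloor$, then adjusts with weights $1$ and $2$. But those weights only give subset sums up to $3\cdot1+5\cdot2=13$, while $E$ can be as large as $\lfloor\sqrt j\rfloor-1$. You cannot restrict to small $j$, because $j_0$ comes from a limit and is not explicit.
\item For the same reason, ``finitely many explicit checks'' is not an argument.
\item The route via $\beta$ is circular. Putting an index in both $\alpha$ and $\beta$ is the same as deleting it from $A$, so you are back to the same subset-sum problem.
\end{itemize}

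The missing ingredient is the inequality $L_n\le n\le W_{n-1}+1$, where $W_n=\sum_{r\le n}L_r$ and $W_0=0$. The paper uses it to show by induction that every integer in $[0,W_n]$ is a subset sum of $L_1,\dots,L_n$. At step $n$ the sums cover $[0,W_{n-1}]\cup[L_n,L_n+W_{n-1}]$, and there is no gap between the two intervals precisely because $L_n\le W_{n-1}+1$. Since $W_j\ge j$, this gives $I$. The same inequality is what would make your decreasing-order greedy rigorous.

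Alternatively, here is a direct witness. Take $m\le j$ minimal with $W_m\ge j$, and put $E=W_m-j$. Minimality gives $W_{m-1}\le j-1$, so $0\le E\le L_m-1$. If $E\ge1$, then $E^2\le(L_m-1)^2<L_m^2\le m$, and $L_{E^2}=E$. Hence $I=\{1,\dots,m\}\setminus\{E^2\}$ (or $I=\{1,\dots,m\}$ if $E=0$) has weight sum exactly $j$. With either fix, the rest of your proof goes through as written.
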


\begin{proof}
Lemma~\ref{prop:lattice-fourier-estimate} gives the estimate
for $j>J_0$, where $J_0$ is an absolute constant. To treat the
remaining indices, it suffices to show $\kappa_j(j)>0$ for every $j$.

Put $W_n=\sum_{r=1}^nL_r$ and $W_0=0$. Every integer from $0$ to
$W_n$ is a sum of a subset of $L_1,\ldots,L_n$. Indeed, if this
holds for $n-1$, the subset sums at step $n$ cover the integer intervals
$$
 [0,W_{n-1}]\quad\hbox{and}\quad[L_n,L_n+W_{n-1}].
$$
There is no missing integer between them, because
$L_n\leq n\leq W_{n-1}+1$. This proves the assertion by induction.
Since $W_j\geq j$, choose a subset $I\subset\{1,\ldots,j\}$ with
$\sum_{r\in I}L_r=j$. Taking $\alpha_r=\one_I(r)$ and $\beta_r=0$
gives $D_j(\alpha,\beta)=j$, so
$$
 \kappa_j(j)\geq4^{-j}>0.
$$
For $1\leq j\leq J_0$, we therefore have
$$
 \frac{4^{-J_0}}{j}\leq\kappa_j(j)\leq\frac{J_0}{j}.
$$
Combining this with the large-$j$ estimate proves \eqref{eq:cj}.
\end{proof}

\section{Counting the regions and controlling their overlap}
\label{sec:region-moments}

Recall the selected indices $\mathcal E_{j,Q}$ and $\mathcal E_j$
from \eqref{eq:selected-atom-indices-Q} and
\eqref{eq:selected-atom-indices}, and the regions $S_e$ from
\eqref{eq:atom-pair-region}. For $e\in\mathcal E_{j,Q}$ we always
write $e=((Q_1,r),(Q_2,t))$, and for a second pair
$\widetilde e=((\widetilde Q_1,\widetilde r),(\widetilde Q_2,\widetilde t))$.
A condition on $Q_i$ or $\widetilde Q_i$ under a sum over
$e,\widetilde e$ refers to these components. The dependence of
these objects on the labels and base points will be kept implicit.

\begin{definition}
For $1\leq j\leq N$ and $Q\in\mathcal Q_j$, define the counting function
\begin{align}
 \mathcal N_{j,Q}(x)
 &=\sum_{e\in\mathcal E_{j,Q}}\one_{S_e}(x)=\sum_{(Q_1,Q_2)\in\mathcal P_{j,Q}}
   \sum_{r=0}^{m_{Q_1}-1}\sum_{t=0}^{n_{Q_2}-1}
   \one_{S_{((Q_1,r),(Q_2,t))}}(x),
 \label{eq:cube-counting-functions}
 \\ 
 \mathcal N_j(x)
 &=\sum_{Q\in\mathcal Q_j}\mathcal N_{j,Q}(x)
 =\sum_{e\in\mathcal E_j}\one_{S_e}(x),
 \qquad
 \mathcal N(x)=\sum_{j=1}^{N}\mathcal N_j(x).
 \label{eq:counting-functions}
\end{align}
\end{definition}

To estimate the measure of the union of the regions $S_e$, we
prove a lower bound for the integral of the counting function
$\mathcal N$ and an upper bound for the integral of $\mathcal N^2$,
after averaging over the labels and the base points
(Proposition~\ref{prop:moments}). 
The lower bound follows from the counts in
Sections~\ref{sec:atom-pairs} and \ref{sec:region-counting}.
The upper bound controls the overlap of the regions.

\begin{proposition}\label{prop:moments}
There are constants $c,C>0$, independent of $N$, such that
\begin{align}
 &c{c_j\theta^2}\leq
 \Av_\omega\Av_{a,b}\int_{\mathbb R^d}\mathcal N_j(x)\,dx
 \leq C{c_j\theta^2}
 \qquad(1\leq j\leq N),
 \label{eq:first-moment}\\
& \Av_\omega\Av_{a,b}\int_{\mathbb R^d}\mathcal N^2(x)\,dx
 \leq C({\rho}+{\rho}^2).
 \label{eq:second-moment}
\end{align}
In particular,
$\Av_\omega\Av_{a,b}\int_{\mathbb R^d}\mathcal N(x)\,dx{\approx\theta^2\sum_{j=1}^{N}c_j\approx{\rho}}$.
\end{proposition}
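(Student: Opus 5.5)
The plan is to expand both moments as sums over selected atom pairs and to evaluate each average by first conditioning on the labels, then averaging over the base points. For \eqref{eq:first-moment} I would write
$\int\mathcal N_j=\sum_{Q\in\mathcal Q_j}\sum_{(Q_1,Q_2)\in\mathcal P_Q}\one_{\{Z_{Q_1}-Z_{Q_2}=j\}}\sum_{r,t}|S_e|$.
By \eqref{eq:selected-atom-pair-count} the inner double sum has exactly $r_j$ terms, and by \eqref{eq:region-size-and-lower-bound} each term is $\approx sR_j^{d-1}$, uniformly in the labels and base points. Lemma~\ref{lem:first-common-ancestor} puts every pair in $\mathcal P_Q$ at first common ancestor level $j$, so the identity after \eqref{eq:label-difference-density} gives $\Av_\omega\one_{\{Z_{Q_1}-Z_{Q_2}=j\}}=c_j$. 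Combining this with \eqref{eq:number-admissible-cube-pairs}, $\#\mathcal Q_j=\mathcal L^{d(N-j)}=R_j^{-d}$ and \eqref{eq:count-identity}, I expect
$$\Av_\omega\Av_{a,b}\int\mathcal N_j\approx R_j^{-d}\,c_j\,\mathcal L^{2dj}r_j\,sR_j^{d-1}=c_j\,\theta^2\frac{R_j}{s}\cdot\frac{s}{R_j}=c_j\theta^2 .$$
The final claim then follows from Lemma~\ref{lem:local-limit} and \eqref{eq:theta-choice}, since $\theta^2\sum_{j\le N}c_j\approx\theta^2\log(N+1)=\rho$.

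For \eqref{eq:second-moment} I would write $\int\mathcal N^2=\sum_{e,\widetilde e}|S_e\cap S_{\widetilde e}|$ over all $e\in\mathcal E_j$, $\widetilde e\in\mathcal E_k$, and split the sum into three classes.
\begin{itemize}
\item[(i)] $e=\widetilde e$. This gives $\int\mathcal N$, which is $\lesssim\rho$ by the first part.
\item[(ii)] $e\neq\widetilde e$ with the same cube pair $(Q_1,Q_2)=(\widetilde Q_1,\widetilde Q_2)$. A shared cube pair forces a shared level $j$, and these terms vanish by Lemma~\ref{lem:separation}.
\item[(iii)] Different cube pairs. This class is the heart of the matter.
\end{itemize}

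For (iii) I would take one of the four cubes $Q_1,Q_2,\widetilde Q_1,\widetilde Q_2$ that is not shared by the other pair, and average over its base point first. This base point is uniform on a cube of side $R_0/8$. Moving it translates the corresponding center $u$ or $v$, so the midpoint $z_e$ moves by up to $R_0/16$, and the shell $S_e$ sweeps across a layer of radial width $\approx R_0$. Since the shell has width $\approx s$, I expect the pointwise bound
$\Av\,\one_{S_e}(x)\lesssim (s/R_0)\one_{\widetilde S_e}(x)$,
where $\widetilde S_e$ is an $R_0$-thickening of $S_e$. The same smoothing applied to $\mathcal N_j$ itself should yield a pointwise bound $\Av_\omega\Av_{a,b}\mathcal N_j(x)\lesssim c_j\theta^2$ for every $x$. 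The reason is that the centers $z_e$ of different cube pairs in $\mathcal P_Q$ are spread over $Q^L\times Q^R$ at the spacing $R_0$ of the level-$0$ grid. Hence the smoothed shells cover each point with multiplicity equal to the average, namely the first-moment total for $Q$ divided by $|Q|\approx R_j^d$.

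For pairs $e,\widetilde e$ that share no cube, both the labels of the $2j$, respectively $2k$, cubes in \eqref{eq:label-difference} and the base points are largely independent. By \eqref{eq:label-factorization}, the joint average then factors up to the labels of common ancestors, and these are harmless because the selection event only involves the differences \eqref{eq:label-difference}. Integrating the product of the two pointwise densities over $x\in[-1,2]^d$ would give $\lesssim(\sum_j c_j\theta^2)^2\approx\rho^2$.

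The main obstacle is the pairs in (iii) that share exactly one cube, say $\widetilde Q_1=Q_1$, possibly with $k\ne j$. Here the labels are correlated: $Z_{Q_1}$ is common to both selection events. I would handle this by conditioning on $Z_{Q_1}$. Given $Z_{Q_1}$, the two events depend on the disjoint label sets of $Q_2$ and $\widetilde Q_2$ below their respective common ancestors. For fixed labels, averaging over the distinct base point $b_{\widetilde Q_2}$ gives $|S_e\cap S_{\widetilde e}|$ an average of size $\lesssim|S_e|\cdot s/R_0$ times the number of $\widetilde t$ whose smoothed shells reach $S_e$. The count of such $\widetilde Q_2$ and $\widetilde t$ is then controlled exactly as in the first-moment computation, with one fewer free cube. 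This should produce $\lesssim\theta^2\cdot(\text{first moment})\lesssim\rho$ rather than $\rho^2$, which accounts for the linear term in $C(\rho+\rho^2)$. If this conditioning fails to decouple the labels cleanly, the fallback is the crude bound $\Av_\omega(\cdot)\le\min$ of the two selection probabilities, $\min(c_j,c_k)$, which should still suffice because $\theta^2\sum_jc_j\approx\rho$.
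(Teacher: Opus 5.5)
Your first-moment computation is the paper's, and your pointwise bound $\Av_\omega\Av_{a,b}\mathcal N_j(x)\lesssim c_j\theta^2$ is \eqref{eq:averaged-cube-count}. The gap is in class (iii), in the claim that for pairs sharing no level-$0$ cube the label average factors ``up to harmless common ancestors''.

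Only the labels of cubes containing both $Q_1$ and $Q_2$ cancel in $Z_{Q_1}-Z_{Q_2}$. Two cube-disjoint pairs can share ancestors strictly below their first common ancestors:
\begin{itemize}
\item within one cube $Q$ at level $j$, this happens when $Q_i^{(h)}=\widetilde Q_i^{(h)}$ for some $1\le h\le j-1$;
\item at levels $j<k$, it happens when, for instance, $\widetilde Q_1\subset Q$ and $\widetilde Q_1^{(h)}=Q_1^{(h)}$ with $1\le h<j$.
\end{itemize}
Such a label enters both differences with weight $\pm L_{h+1}\neq0$. So the two selection events are genuinely dependent, and the factor $c_jc_k$ is not available.

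Within one level this is harmless only for a crude reason. Bounding the joint label average by $c_j$ gives $Cc_j\theta^4R_j^d\le Cc_j\theta^2R_j^d$; the paper additionally extracts a factor $\mathcal L^{-d}$ from $Q_i^{(j-1)}=\widetilde Q_i^{(j-1)}$. Across levels no such crude bound works. Replacing $c_jc_k$ by your fallback $\min(c_j,c_k)$ in the cross-level terms gives $\theta^4\sum_{j<k}\min(c_j,c_k)\approx\theta^4N=\rho^2N/\log^2(N+1)$, which is unbounded.

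The missing idea is geometric. Suppose $j<k$ and $P\in K(Q_1,Q_2)\cap K(\widetilde Q_1,\widetilde Q_2)$. Then $P\subsetneq Q$ and $P\supseteq\widetilde Q_\ell$ for some $\ell$, so an endpoint cube of the larger pair lies in $Q$. Lemma~\ref{lem:radial-separation} then gives $S_e\cap S_{\widetilde e}=\varnothing$. Indeed, points of $S_e$ lie within $2R_j$ of $u\in Q$, while points of $S_{\widetilde e}$ lie at distance at least $R_k/4\ge\mathcal L R_j/4$ from a center in $Q$.

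Hence every correlated cross-level term vanishes. All surviving cross-level terms have disjoint label sets and disjoint base points, and this is what makes your product-of-densities bound $\lesssim\rho^2$ legitimate. The same lemma also kills outright the cross-level pairs that share a level-$0$ cube, which you tried to estimate by conditioning on $Z_{Q_1}$.

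That conditioning does not make the remaining label sets disjoint either, since the partners $Q_2,\widetilde Q_2$ may share low-level ancestors. Within one level, you should instead bound the second selection indicator by $1$ and use $Z_{\widetilde Q_2}=Z_{Q_2}$ to fix the index range, as in Part (ii) of the paper.
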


We prove the first estimate in Section~\ref{sec:first-integral}.
Section~\ref{sec:discrete-coarea} gives the endpoint integral
estimates needed for the second estimate, which is proved in
Section~\ref{sec:second-integral}.

\subsection{Proof of \eqref{eq:first-moment}}
\label{sec:first-integral}

\begin{lemma}
For every $1\leq j\leq N$ and $Q\in\mathcal Q_j$,
\begin{equation}\label{eq:first-cube-integral}
 \Av_\omega\Av_{a,b}\int_{\mathbb R^d}\mathcal N_{j,Q}(x)\,dx
{\approx c_j\theta^2R_j^d}.
\end{equation}
\end{lemma}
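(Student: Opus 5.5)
The plan is to compute $\int\mathcal N_{j,Q}$ exactly, up to constants, for each fixed choice of labels and base points, and only then average. For fixed $\omega$ and $(a,b)$, integrating \eqref{eq:cube-counting-functions} term by term gives
$$
 \int_{\mathbb R^d}\mathcal N_{j,Q}(x)\,dx
 =\sum_{(Q_1,Q_2)\in\mathcal P_{j,Q}}\sum_{r=0}^{m_{Q_1}-1}\sum_{t=0}^{n_{Q_2}-1}|S_{((Q_1,r),(Q_2,t))}|.
$$
By \eqref{eq:region-size-and-lower-bound}, each $|S_e|\approx sR_j^{d-1}$. The implicit constants depend only on $d$ and $h_0$, and the estimate holds for every admissible $e$, uniformly in the labels and base points. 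The reason is that $u\in Q^L$ and $v\in Q^R$ always, which gives $|u-v|\leq 2^{-3}R_j$ and hence \eqref{ruv + -}. The inner double sum therefore has $m_{Q_1}n_{Q_2}$ terms. By \eqref{eq:selected-atom-pair-count} this equals $r_j$ for every selected pair. So
$$
 \int\mathcal N_{j,Q}\approx \#\mathcal P_{j,Q}\cdot r_j\, sR_j^{d-1},
$$
uniformly in $(a,b)$. This means the base-point average $\Av_{a,b}$ is harmless: it acts on a quantity that is already comparable to an expression independent of $(a,b)$.

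Next I would average over labels. Since $\#\mathcal P_{j,Q}=\sum_{(Q_1,Q_2)\in\mathcal P_Q}\one_{\{Z_{Q_1}-Z_{Q_2}=j\}}$, Lemma~\ref{lem:first-common-ancestor} says every pair in $\mathcal P_Q$ has first common ancestor $Q$ at level $j$. The identity after \eqref{eq:label-difference-density} then gives $\Av_\omega\one_{\{Z_{Q_1}-Z_{Q_2}=j\}}=\kappa_j(j)=c_j$. Hence
$$
 \Av_\omega\#\mathcal P_{j,Q}=c_j\#\mathcal P_Q,
$$
and by \eqref{eq:number-admissible-cube-pairs} this is $\approx c_j\mathcal L^{2dj}$. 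Although $r_j$ was introduced for selected pairs, it depends only on $j$, so it pulls out of the label average.

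Combining the two steps, I expect
$$
 \Av_\omega\Av_{a,b}\int\mathcal N_{j,Q}\approx c_j\,\mathcal L^{2dj}r_j\,sR_j^{d-1}.
$$
By \eqref{eq:count-identity}, $\mathcal L^{2dj}r_j=\theta^2R_j/s$, so the right side is $c_j\theta^2R_j^d$, which is \eqref{eq:first-cube-integral}. The only point needing care, and the main (mild) obstacle, is confirming that the bound $|S_e|\approx sR_j^{d-1}$ from \eqref{eq:region-size-and-lower-bound} is genuinely uniform. Its proof uses the fixed cap measure together with \eqref{ruv + -}, which depend only on $|u-v|\leq2^{-3}R_j$ and $s/R_j\leq\epsilon_0$, and both hold for every label and base point choice. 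I would also check that the atom centers $u_{Q_1,r}$ and $v_{Q_2,t}$, not just the cubes $Q_1,Q_2$, lie within the bound used for $|u-v|$. Since $Q_1\subset Q^L$ and the supports lie in $Q_1$, the centers lie in $Q^L$, so this holds.
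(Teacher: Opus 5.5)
Your proposal is correct and follows essentially the same route as the paper. Both arguments sum $|S_e|\approx sR_j^{d-1}$ over the $r_j$ atom pairs of each selected cube pair, average the selection indicator to get $c_j\#\mathcal P_Q\approx c_j\mathcal L^{2dj}$, and conclude with \eqref{eq:count-identity}. The only difference is cosmetic: you use the pointwise uniform bound on $|S_e|$ before averaging over the base points, whereas the paper states it as a bound on $\Av_{a,b}|S_e|$.
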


\begin{proof}
Fix $j,Q$. For every selected atom pair,
$\Av_{a,b}|S_e|\approx sR_j^{d-1}$ uniformly by
\eqref{eq:region-size-and-lower-bound}. Its cube pair contributes
$r_j$ atom pairs by \eqref{eq:selected-atom-pair-count}.
By Lemma~\ref{lem:first-common-ancestor} and
\eqref{eq:label-difference-density}, for each $(Q_1,Q_2)\in\mathcal P_Q$ (defined as in \eqref{P Q}), 
the label average is
\begin{equation}\label{eq:single-pair-label-count}
 \Av_\omega\one_{\{Z_{Q_1}-Z_{Q_2}=j\}}=c_j.
\end{equation}
Summing first over atom indices and then averaging the labels gives
\begin{align*}
 \Av_\omega\Av_{a,b}\int\mathcal N_{j,Q}(x)\,dx
 &\approx c_jr_jsR_j^{d-1}\,\#\mathcal P_Q
 \approx c_j\mathcal L^{2dj}r_jsR_j^{d-1}
 =c_j\theta^2R_j^d.
\end{align*}
The second comparison uses \eqref{eq:number-admissible-cube-pairs},
and the equality uses \eqref{eq:count-identity}.
\end{proof}

There are $R_j^{-d}$ cubes in $\mathcal Q_j$. Summing
\eqref{eq:first-cube-integral} over these cubes proves
\eqref{eq:first-moment}. Summing again over $1\leq j\leq N$ gives
$\Av_\omega\Av_{a,b}\int_{\mathbb R^d}\mathcal N(x)\,dx{\approx\theta^2\sum_{j=1}^{N}c_j\approx{\rho}}$.

\subsection{Containing regions and integration in the base points}
\label{sec:discrete-coarea}

Before estimating products of counting functions, we locate all
regions belonging to one cube and bound their averages over the
base points.

\begin{definition}
For $Q\in\mathcal Q_j$, put
\begin{equation}\label{eq:containing-region}
 \mathcal O_{j,Q}
 =\left\{x=(x',x_d)\in\mathbb R^d:
 |x'-c_Q'|\leq{5\cdot2^{-4}}R_j,
 R_j/3\leq x_d-c_{Q,d}\leq R_j
 \right\}.
\end{equation}
\end{definition}

\begin{lemma}\label{lem: O disjoint}
For every $Q,\widetilde Q\in\mathcal Q_j$ with $Q\ne\widetilde Q$, we have $
 \mathcal O_{j,Q}\cap\mathcal O_{j,\widetilde Q}=\varnothing.
$
\end{lemma}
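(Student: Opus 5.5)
Since both sets are products of a horizontal disc and a vertical interval, it suffices to show that for distinct cubes $Q,\widetilde Q\in\mathcal Q_j$ either the horizontal discs $\{|x'-c_Q'|\le 5\cdot2^{-4}R_j\}$ and $\{|x'-c_{\widetilde Q}'|\le 5\cdot2^{-4}R_j\}$ are disjoint, or the vertical intervals $[c_{Q,d}+R_j/3,\,c_{Q,d}+R_j]$ and $[c_{\widetilde Q,d}+R_j/3,\,c_{\widetilde Q,d}+R_j]$ are disjoint. The only input is that level-$j$ cubes are the cells of a grid of side $R_j$, so their centers differ by a nonzero vector in $R_j\mathbb Z^d$.

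\emph{Case 1: $c_{Q,d}=c_{\widetilde Q,d}$.} Then $c_Q'\neq c_{\widetilde Q}'$, and the difference lies in $R_j\mathbb Z^{d-1}\setminus\{0\}$, so $|c_Q'-c_{\widetilde Q}'|\ge R_j$. Any point in both discs would satisfy
$$
|c_Q'-c_{\widetilde Q}'|\le 2\cdot 5\cdot 2^{-4}R_j=\tfrac58R_j<R_j,
$$
which is impossible, so the horizontal discs are disjoint.

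\emph{Case 2: $c_{Q,d}\ne c_{\widetilde Q,d}$.} Without loss of generality $c_{\widetilde Q,d}\ge c_{Q,d}+R_j$. The interval for $\widetilde Q$ then begins at
$$
c_{\widetilde Q,d}+\tfrac{R_j}{3}\ge c_{Q,d}+\tfrac43R_j>c_{Q,d}+R_j,
$$
which is beyond the right endpoint of the interval for $Q$. So the vertical intervals are disjoint.

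In either case $\mathcal O_{j,Q}\cap\mathcal O_{j,\widetilde Q}=\varnothing$. There is no real obstacle here. The only point to check is that the numerical constants fit: the disc radius $5\cdot2^{-4}<1/2$ and the interval length $2R_j/3<R_j$ are both smaller than the grid spacing in the respective direction. The case $d=1$ is not needed, since $d\ge2$ throughout.
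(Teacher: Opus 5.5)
Your proof is correct and is essentially the paper's argument: the cube centers differ by a nonzero vector in $R_j\mathbb Z^d$, while $\mathcal O_{j,Q}$ has horizontal extent $5R_j/8<R_j$ and vertical extent $2R_j/3<R_j$. The only difference is cosmetic: the paper compares the coordinate projections in a coordinate where the centers differ, whereas you split into horizontal and vertical cases and compare the Euclidean diameter of the disc with $|c_Q'-c_{\widetilde Q}'|\geq R_j$.
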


\begin{proof}
Each horizontal coordinate interval of $\mathcal O_{j,Q}$ has
length $5R_j/8<R_j$, and its vertical interval has length
$2R_j/3<R_j$. These regions are translates of one fixed set by
the cube centers, whose coordinate differences are integer multiples
of $R_j$. Distinct centers therefore give disjoint regions.
\end{proof}

For $x\in\mathbb R^d$ and $1\leq j\leq N$, put
$
 \mathcal G_j(x)
 =\{(u,v)\in\mathbb R^d\times\mathbb R^d:
 |\Phi_{R_j}(x;u,v)|\leq h_0sR_j\}.
$

\begin{lemma}
For every $e\in\mathcal E_{j,Q}$, with
$u=u_{Q_1,r}$ and $v=v_{Q_2,t}$, and every $x\in\mathbb R^d$,
\begin{equation}\label{eq:region-strip-containment}
 \one_{S_e}(x)
 \leq \one_{\mathcal O_{j,Q}}(x)
       \one_{\mathcal G_j(x)}(u,v).
\end{equation}
Moreover, we have
\begin{equation}\label{eq:output-overlap}
 |\mathcal O_{j,Q}|\approx R_j^d\qquad{\rm and}\qquad
 \sum_{Q\in\mathcal Q_j}|\mathcal O_{j,Q}|\leq \frac{2}{3}.
\end{equation}
These bounds are independent of the labels and base points.
\end{lemma}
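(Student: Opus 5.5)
The first inequality in \eqref{eq:region-strip-containment} splits into two membership statements for $x\in S_e$: that $(u,v)\in\mathcal G_j(x)$, and that $x\in\mathcal O_{j,Q}$. The first is exactly \eqref{size Phi}, since $x\in S_e$ gives $|\Phi_{R_j}(x;u,v)|\leq h_0sR_j$. For the second, both the horizontal and the vertical constraints come from estimates already made in the proof of Lemma~\ref{lem:separation} and in \eqref{ruv + -}.

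The horizontal constraint was proved in Lemma~\ref{lem:separation}. There we showed $|z_e'-c_Q'|\leq2^{-4}R_j$ and $|x'-c_Q'|\leq3\cdot2^{-5}R_j<5\cdot2^{-4}R_j$ for $x=z_e+\lambda\zeta\in S_e$.

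For the vertical constraint, write $x_d-c_{Q,d}=(z_{e,d}-c_{Q,d})+\lambda\zeta_d$. By \eqref{ruv + -} we have $R_j/2\leq\lambda\leq3R_j/4$. From $|\zeta-e_d|\leq2^{-5}$ we get $1-2^{-11}\leq\zeta_d\leq1$. Together these give
\[
 \lambda\zeta_d\in\bigl[(1-2^{-11})R_j/2,\ 3R_j/4\bigr].
\]
Next, $|u-c_Q|,|v-c_Q|<2^{-4}R_j$ (shown in Lemma~\ref{lem:separation}), so $|z_{e,d}-c_{Q,d}|<2^{-4}R_j$. Hence
\[
 x_d-c_{Q,d}\geq(1-2^{-11})R_j/2-2^{-4}R_j>R_j/3,
 \qquad
 x_d-c_{Q,d}\leq 3R_j/4+2^{-4}R_j<R_j.
\]
This gives $x\in\mathcal O_{j,Q}$. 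If $x\notin S_e$ the left side of \eqref{eq:region-strip-containment} is zero and there is nothing to prove, so this establishes \eqref{eq:region-strip-containment}.

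For \eqref{eq:output-overlap}, observe that $\mathcal O_{j,Q}$ is a product of a $(d-1)$-dimensional ball of radius $5\cdot2^{-4}R_j$ and an interval of length $2R_j/3$. Its measure is therefore $c_dR_j^d$ with
\[
 c_d=\tfrac23\,\bigl|B^{d-1}(0,5\cdot2^{-4})\bigr|,
\]
which gives $|\mathcal O_{j,Q}|\approx R_j^d$. For the sum, $\#\mathcal Q_j=R_j^{-d}$, so $\sum_{Q\in\mathcal Q_j}|\mathcal O_{j,Q}|=c_d$. The ball $B^{d-1}(0,5/16)$ lies in the cube $[-5/16,5/16]^{d-1}$, whose measure is $(5/8)^{d-1}\leq1$. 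Hence $c_d\leq 2/3$; when $d=1$ the ball factor is trivially $1$. None of these quantities depends on the labels or on the base points, because $\mathcal O_{j,Q}$ is defined from $Q$ alone and the constants above used only the position of $a_P,b_P$ inside $\frac18P$ and the geometry of $Q^L,Q^R$.

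The only step needing any care is the lower vertical bound $R_j/3$. It requires the radius lower bound $\lambda\geq R_j/2$ from \eqref{ruv + -} and the small displacement of $z_e$ from $c_Q$, and both are already available. Everything else is bookkeeping.
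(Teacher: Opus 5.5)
Your proof is correct and follows essentially the same route as the paper. Both arguments verify $|\Phi_{R_j}(x;u,v)|\le h_0sR_j$ directly from the definition of $S_e$. Both then bound $|x'-c_Q'|$ and $x_d-c_{Q,d}$ using $R_j/2\le\lambda\le 3R_j/4$, $\zeta_d\ge 1-2^{-11}$ and $|z_e-c_Q|\le 2^{-4}R_j$. Finally, both estimate $|\mathcal O_{j,Q}|\le\frac23(5/8)^{d-1}R_j^d$ via the enclosing cube and sum over the $R_j^{-d}$ cubes.
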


\begin{proof}
Let $x\in S_e$ and write $x=z_e+\lambda\zeta$ as in
\eqref{eq:atom-pair-region}. The calculation in the proof of
Proposition~\ref{cor:positive-region} gives
$R_j/2\leq\lambda\leq3R_j/4$.
The positions of $Q^L,Q^R$ give $|z_e-c_Q|\leq{2^{-4}} R_j$.
Since $|\zeta-e_d|\leq{2^{-5}}$, we have
$|\zeta'|\leq{2^{-5}}$ and $\zeta_d\geq1-{2^{-11}}$.
Consequently,
$$
 |x'-c_Q'|
 \leq |z_e'-c_Q'|+\lambda|\zeta'|
 \leq{3\cdot2^{-5}}R_j
 \leq{5\cdot2^{-4}}R_j,
$$
and
$$
 {\bigl(2^{-1}-2^{-4}-2^{-12}\bigr)}R_j
 \leq x_d-c_{Q,d}
 \leq{\bigl(2^{-1}+2^{-2}+2^{-4}\bigr)}R_j.
$$
The left-hand side exceeds $R_j/3$, and the right-hand side
{is less than $R_j$, since $13\cdot2^{-4}<1$.}
Thus $x\in\mathcal O_{j,Q}$. Also,
\eqref{eq:pair-sphere-radius} and
\eqref{eq:atom-pair-region} give
$
 |\Phi_{R_j}(x;u,v)|
 =2|\lambda^2-r_{u,v}^2|
 \leq h_0sR_j.
$
This proves \eqref{eq:region-strip-containment}.

The definition gives $|\mathcal O_{j,Q}|\approx R_j^d$ and
$$
 |\mathcal O_{j,Q}|\leq\frac23\left(\frac58\right)^{d-1}R_j^d
 \leq\frac23R_j^d.
$$
Summing over the $R_j^{-d}$ cubes proves \eqref{eq:output-overlap}.
\end{proof}

\begin{lemma}
Fix $Q\in\mathcal Q_j$ and $x\in\mathcal O_{j,Q}$. For every
$u\in Q^L$,
\begin{equation}\label{eq:one-variable-coarea}
 \int_{Q^R}
 \one_{\mathcal G_j(x)}(u,v)\,dv
 \leq CsR_j^{d-1}.
\end{equation}
The same estimate holds with the endpoints interchanged.
In particular,
\begin{equation}\label{eq:one-scale}
 \iint_{Q^L\times Q^R}
 \one_{\mathcal G_j(x)}(u,v)\,du\,dv
 \leq CsR_j^{2d-1}.
\end{equation}
\end{lemma}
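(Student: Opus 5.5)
Fix $u\in Q^L$ and $x\in\mathcal O_{j,Q}$. By \eqref{eq:pair-sphere-radius}, the condition $(u,v)\in\mathcal G_j(x)$ is a condition on $v$ alone:
$$
 \bigl||x-u|^2+|x-v|^2-R_j^2\bigr|\leq h_0sR_j .
$$
Equivalently,
$$
 R_j^2-|x-u|^2-h_0sR_j\leq|x-v|^2\leq R_j^2-|x-u|^2+h_0sR_j .
$$
So the admissible $v$ lie in a spherical shell centered at $x$, with squared radii in an interval of length $2h_0sR_j$. The plan is to integrate in polar coordinates centered at $x$. Then
$$
 \int_{Q^R}\one_{\mathcal G_j(x)}(u,v)\,dv
 \leq \mathcal H^{d-1}(\mathbb S^{d-1})\int_{I}\lambda^{d-1}\,d\lambda,
$$
where $I=\{\lambda>0:\lambda^2\in[A-h_0sR_j,A+h_0sR_j]\}$ and $A=R_j^2-|x-u|^2$. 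Restricting to $v\in Q^R$ adds the information that $\lambda=|x-v|$ lies in the range of distances from $x$ to $Q^R$.

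The key step is a two-sided bound $\lambda\approx R_j$ for $v\in Q^R$ and $x\in\mathcal O_{j,Q}$. From \eqref{eq:containing-region}, we have $x_d-c_{Q,d}\geq R_j/3$. The point $v$ lies within $2^{-4}R_j$ of $c_Q$, by the estimate $|v-c_Q|<2^{-4}R_j$ recorded in the proof of Lemma~\ref{lem:separation}. Hence
$$
 |x-v|\geq R_j/3-2^{-4}R_j\geq R_j/4 .
$$
For the upper bound, the horizontal and vertical bounds in \eqref{eq:containing-region} give $|x-v|\leq C_dR_j$.

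On $I\cap[R_j/4,C_dR_j]$ we have $d\lambda=d(\lambda^2)/(2\lambda)\leq 2R_j^{-1}\,d(\lambda^2)$. The $\lambda^2$-interval has length $2h_0sR_j$, so the radial length is at most $4h_0s$. Multiplying by $\lambda^{d-1}\leq C_d^{d-1}R_j^{d-1}$ gives $CsR_j^{d-1}$, which is \eqref{eq:one-variable-coarea}. This holds whether or not $A$ is positive, since an empty interval only helps. The interchanged estimate is identical by the symmetry of $\Phi_{R_j}$ in $u$ and $v$; the lower bound $|x-u|\geq R_j/4$ follows the same way, because $u\in Q^L$ also lies within $2^{-4}R_j$ of $c_Q$. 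Finally, \eqref{eq:one-scale} follows by integrating \eqref{eq:one-variable-coarea} over $u\in Q^L$ and using $|Q^L|\leq R_j^d$.

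The only point requiring care is the lower bound $|x-v|\gtrsim R_j$. Without it, the change of variables from $\lambda^2$ to $\lambda$ would lose the factor $R_j^{-1}$, since near $\lambda=0$ a thin shell in $\lambda^2$ is thick in $\lambda$. This is exactly why $\mathcal O_{j,Q}$ is placed vertically away from $Q$; everything else is routine.
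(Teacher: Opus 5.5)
Your proof is correct, but it slices the set differently from the paper.

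\textbf{The paper's route.} It fixes $x$, $u$ and the horizontal part $v'$, and uses $\partial_{v_d}\Phi_{R_j}(x;u,v)=-2(x_d-v_d)$ together with $x_d-v_d\geq\frac{13}{3}2^{-4}R_j>R_j/4$ on $Q^R$. Hence $v_d\mapsto\Phi_{R_j}(x;u,v)$ is strictly monotone, with derivative at least $R_j/2$ in absolute value. The preimage of $[-h_0sR_j,h_0sR_j]$ is therefore an interval of length at most $4h_0s$. Integrating over the $(d-1)$-dimensional cross-section of $Q^R$, of side $2^{-4}\tau R_j$, gives $CsR_j^{d-1}$.

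\textbf{Your route.} You use the exact geometry instead: for fixed $x,u$ the admissible $v$ form a shell about $x$. Polar coordinates centered at $x$ reduce the estimate to the radial width of that shell. This width is at most $4h_0s$ because $|x-v|\geq R_j/4$, and the angular factor is controlled by $|x-v|\leq CR_j$ rather than by the size of $Q^R$. The estimate $|v-c_Q|<2^{-4}R_j$ you cite from Lemma~\ref{lem:separation} was stated there for atom centers. Its derivation (offset of the center of $Q^R$ plus its half-diagonal) applies to every $v\in Q^R$, so the citation is harmless.

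\textbf{What each buys.} Your argument needs only a distance separation $|x-v|\gtrsim R_j$, not the directional fact that $x$ lies above $Q^R$, but it also needs the upper bound $|x-v|\lesssim R_j$. The paper's argument needs the sign and size of one partial derivative, but not the spherical shape of the level sets. It therefore survives replacing the sphere by any defining function with one partial derivative bounded below, and it is exactly the form reused in Lemma~\ref{lem:multi-endpoint}. Both give the same bound with constants depending only on $d$.
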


\begin{proof}
By the definition of $\Phi_{R_j}$,
$\partial_{v_d}\Phi_{R_j}(x;u,v)=-2(x_d-v_d)$.

For $u\in Q^L$ and $v\in Q^R$, the positions of these cubes give
$|u_d-c_{Q,d}|,|v_d-c_{Q,d}|\leq{2^{-4}} R_j$.
Since $x\in\mathcal O_{j,Q}$, we also have
$x_d-c_{Q,d}\geq R_j/3$. Hence
$$
 x_d-v_d
 =(x_d-c_{Q,d})-(v_d-c_{Q,d})
 \geq{\frac{13}{3}2^{-4}}R_j>R_j/4.
$$

For fixed $x,u,v'$, the function $v_d\mapsto\Phi_{R_j}(x;u,v)$ is
strictly decreasing, with derivative of absolute value at least
$R_j/2$. Its inverse
image of the interval $[-h_0sR_j,h_0sR_j]$ has length at most
$$
 \frac{2h_0sR_j}{R_j/2}=4h_0s\leq4s.
$$

Since $Q^R$ has side length ${2^{-4}}\tau R_j$, integrating over the remaining
$d-1$ coordinates of $v$ proves
\eqref{eq:one-variable-coarea}. Integrating also over $u\in Q^L$
proves \eqref{eq:one-scale}.

If $v$ is fixed, use
$\partial_{u_d}\Phi_{R_j}=-2(x_d-u_d)$.
The same lower bound for its absolute value proves the estimate
with the endpoints interchanged.
\end{proof}

\begin{lemma}
For every $1\leq j\leq N$, $Q\in\mathcal Q_j$, and
$x\in\mathbb R^d$,
\begin{equation}\label{eq:averaged-cube-count}
 \Av_\omega\Av_{a,b}\mathcal N_{j,Q}(x)
 \leq 
 C{c_j\theta^2}\one_{\mathcal O_{j,Q}}(x).
\end{equation}
\end{lemma}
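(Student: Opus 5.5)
By \eqref{eq:region-strip-containment}, every term of $\mathcal N_{j,Q}$ vanishes off $\mathcal O_{j,Q}$. So we may fix $x\in\mathcal O_{j,Q}$, and it suffices to show $\Av_\omega\Av_{a,b}\mathcal N_{j,Q}(x)\leq Cc_j\theta^2$.

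Write
\[
 \mathcal N_{j,Q}(x)\leq\sum_{(Q_1,Q_2)\in\mathcal P_Q}\one_{\{Z_{Q_1}-Z_{Q_2}=j\}}\sum_{r<m_{Q_1}}\sum_{t<n_{Q_2}}\one_{\mathcal G_j(x)}(u_{Q_1,r},v_{Q_2,t}).
\]
With the labels fixed, we average over the base points first. For a given pair, $u_{Q_1,r}=a_{Q_1}+2^6n_{Q_1}rs\,e_d$ depends only on $a_{Q_1}$, and $v_{Q_2,t}$ depends only on $b_{Q_2}$. The variables $a_{Q_1}$ and $b_{Q_2}$ range independently and uniformly over $\frac18Q_1\times\frac18Q_2$, a set of measure $\approx R_0^{2d}$. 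After the translation, the points $u$ and $v$ range over translates of these cubes. By \eqref{eq:centers-diameter} and the support calculation of Section~\ref{sec:atoms-functions}, these translates still lie in $Q_1\subset Q^L$ and $Q_2\subset Q^R$. Therefore
\[
 \Av_{a,b}\one_{\mathcal G_j(x)}(u_{Q_1,r},v_{Q_2,t})\leq CR_0^{-2d}\iint_{Q_1\times Q_2}\one_{\mathcal G_j(x)}(u,v)\,du\,dv.
\]

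Next we sum over the atom indices. On the selection event, \eqref{eq:selected-atom-pair-count} gives exactly $r_j$ index pairs, and $r_j$ does not depend on the labels. This gives the bound
\[
 \Av_{a,b}\mathcal N_{j,Q}(x)\leq Cr_jR_0^{-2d}\sum_{(Q_1,Q_2)\in\mathcal P_Q}\one_{\{Z_{Q_1}-Z_{Q_2}=j\}}\iint_{Q_1\times Q_2}\one_{\mathcal G_j(x)}\,du\,dv.
\]
The only remaining dependence on $\omega$ is in the indicator, and its average is $c_j$ by \eqref{eq:single-pair-label-count} and Lemma~\ref{lem:first-common-ancestor}. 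The cubes $Q_1\times Q_2$ with $(Q_1,Q_2)\in\mathcal P_Q$ are disjoint and contained in $Q^L\times Q^R$, so the sum of the integrals is at most the integral over $Q^L\times Q^R$. That integral is bounded by $CsR_j^{2d-1}$ by \eqref{eq:one-scale}, which is where $x\in\mathcal O_{j,Q}$ is used.

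Combining these bounds,
\[
 \Av_\omega\Av_{a,b}\mathcal N_{j,Q}(x)\leq Cc_jr_jR_0^{-2d}sR_j^{2d-1}.
\]
By \eqref{eq:count-identity}, $r_js=\theta^2R_j\mathcal L^{-2dj}$, and $\mathcal L^{-2dj}=(R_0/R_j)^{2d}$. The right-hand side is therefore exactly $Cc_j\theta^2$.

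The main point requiring care is the base-point averaging. The shifts $2^6n_{Q_1}rs$ depend on the labels through $n_{Q_1}$. We therefore have to fix the labels before averaging over $a,b$, and we have to check that the shifted cubes stay inside $Q^L\times Q^R$ uniformly in the labels. Only then do the bound $r_j$ and the label average $c_j$ factor cleanly.
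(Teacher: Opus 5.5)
Your proposal is correct and follows the paper's proof essentially step for step. You bound $\one_{S_e}$ via \eqref{eq:region-strip-containment}, average the two relevant base points with the labels fixed and enlarge to $Q_1\times Q_2$, use $m_{Q_1}n_{Q_2}=r_j$ and the label average $c_j$, and then combine the disjoint domains inside $Q^L\times Q^R$ before applying \eqref{eq:one-scale} and \eqref{eq:count-identity}.
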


\begin{proof}
Fix $\omega\in\{0,1\}^{\mathcal D}$ and an atom pair
$e=((Q_1,r),(Q_2,t))\in\mathcal E_{j,Q}$.  The indicator $\one_{S_e}(x)$ depends
only on $a_{Q_1}$ and $b_{Q_2}$. The normalized integrals over
all other base-point coordinates equal one. Hence, noting that $|{\frac18Q_1}|=|{\frac18Q_2}|=8^{-d}R_0^d$,
we have
\begin{align}
 \Av_{a,b} \one_{S_e}(x)
 &=\fint_{{\frac18Q_1}}\fint_{{\frac18Q_2}}
 \one_{S_e}(x)\,db_{Q_2}\,da_{Q_1}\notag\\
 &=8^{2d}R_0^{-2d}
 \int_{{\frac18Q_1}+{2^6}n_{Q_1}rs\,e_d}
 \int_{{\frac18Q_2}+{2^4}ts\,e_d}
 \one_{S_e}(x)\,dv\,du
 \notag\\
  &\leq
  8^{2d}R_0^{-2d}\one_{\mathcal O_{j,Q}}(x)
 \iint_{Q_1\times Q_2}
 \one_{\mathcal G_j(x)}(u,v)\,du\,dv.
 \label{eq:translated-pair-bound}
\end{align}
For the second equality, we use the changes of variables
$
 u=a_{Q_1}+{2^6}n_{Q_1}rs\,e_d,
 \ 
 v=b_{Q_2}+{2^4}ts\,e_d.
$
This change of variables has Jacobian one.
By \eqref{eq:centers-diameter}, the translated domains ${\frac18Q_1}+{2^6}n_{Q_1}rs\,e_d$ and ${\frac18Q_2}+{2^4}ts\,e_d$ lie in
$Q_1$ and $Q_2$, respectively.
Applying \eqref{eq:region-strip-containment} and enlarging
the integration domains to $Q_1\times Q_2$ gives the inequality above.

The enlargement to $Q_1\times Q_2$ removes the dependence of the
upper bound on the atom indices and labels. Thus, for fixed
$Q_1,Q_2$, the right-hand side of \eqref{eq:translated-pair-bound}
is independent of $\omega,r,t$. This is the order of averaging we
will also use for products of counting functions.
On the selection set, $m_{Q_1}n_{Q_2}=r_j$. Thus
$$
 \Av_\omega\Bigl[\one_{\{Z_{Q_1}-Z_{Q_2}=j\}}
 \sum_{r=0}^{m_{Q_1}-1}\sum_{t=0}^{n_{Q_2}-1}1\Bigr]
 =r_j\,\Av_\omega\one_{\{Z_{Q_1}-Z_{Q_2}=j\}}
 =c_jr_j,
$$
where the last equality is \eqref{eq:single-pair-label-count}.
By \eqref{eq:cube-counting-functions} and
\eqref{eq:translated-pair-bound},
\begin{align*}
 \Av_\omega\Av_{a,b}\mathcal N_{j,Q}(x)
 &\leq
 Cc_jr_jR_0^{-2d}\one_{\mathcal O_{j,Q}}(x)
 \sum_{(Q_1,Q_2)\in\mathcal P_Q}
 \iint_{Q_1\times Q_2}
 \one_{\mathcal G_j(x)}(u,v)\,du\,dv\\
 &\leq
 Cc_jR_0^{-2d}r_jsR_j^{2d-1}
 \one_{\mathcal O_{j,Q}}(x)\\
 &=Cc_j{\mathcal L^{2dj}}r_j\frac{s}{R_j}
 \one_{\mathcal O_{j,Q}}(x)
 =Cc_j\theta^2\one_{\mathcal O_{j,Q}}(x).
\end{align*}
The second inequality follows by combining the disjoint domains
$Q_1\times Q_2\subset Q^L\times Q^R$ and applying
\eqref{eq:one-scale}. The last equality uses
\eqref{eq:count-identity}.
This proves \eqref{eq:averaged-cube-count}.
\end{proof}

\subsection{Proof of \eqref{eq:second-moment}}
\label{sec:second-integral}

We distinguish pairs from different levels, pairs from different
cubes at the same level, and pairs from the same cube. We first
record which labels occur in each selection condition.

\begin{definition}
If $Q_1,Q_2\in\mathcal Q_0$ have first common ancestor at level $j$,
put
$$
 K(Q_1,Q_2)
 =\{Q_1^{(h)}:0\leq h\leq j-1\}
  \cup\{Q_2^{(h)}:0\leq h\leq j-1\}.
$$
\end{definition}

By \eqref{eq:label-difference}, $Z_{Q_1}-Z_{Q_2}$ depends only on the
labels $\omega(P)$ with $P\in K(Q_1,Q_2)$.

If $Q_1^{(j)}=Q_2^{(j)}=Q$, every cube in
$K(Q_1,Q_2)$ is strictly contained in $Q$.
Let $Q_1,Q_2,\widetilde Q_1,\widetilde Q_2\in\mathcal Q_0$,
and suppose the two pairs have first common ancestors at levels
$j$ and $k$, respectively. If
$$
 K(Q_1,Q_2)\cap K(\widetilde Q_1,\widetilde Q_2)=\varnothing,
$$
the two selection conditions depend on disjoint sets of labels.
By \eqref{eq:label-factorization} and \eqref{eq:cj-def},
\begin{align}
 \Av_\omega\bigl[\one_{\{Z_{Q_1}-Z_{Q_2}=j\}}
 \one_{\{Z_{\widetilde Q_1}-Z_{\widetilde Q_2}=k\}}\bigr]
=
 \left(\Av_\omega \one_{\{Z_{Q_1}-Z_{Q_2}=j\}}\right)
 \left(\Av_\omega \one_{\{Z_{\widetilde Q_1}
                         -Z_{\widetilde Q_2}=k\}}\right)
 =c_jc_k.
 \label{eq:cross-level-label-count}
\end{align}
In particular, this average equals $c_j^2$ when $k=j$.

We also need the following geometric separation between levels.

\begin{lemma}\label{lem:radial-separation}
Let $j<k$, $Q\in\mathcal Q_j$ and
$\widetilde Q\in\mathcal Q_k$. Let
$
 e=((Q_1,r),(Q_2,t))\in\mathcal E_{j,Q},
$ and
$ \widetilde e=
 ((\widetilde Q_1,\widetilde r),
  (\widetilde Q_2,\widetilde t))
 \in\mathcal E_{k,\widetilde Q}.
$
Then
$$
 \widetilde Q_1\subset Q
 \ \ \text{or}\ \
 \widetilde Q_2\subset Q
 \ \Longrightarrow\ 
 S_e\cap S_{\widetilde e}=\varnothing.
$$
In particular, for $j<k$,
$
 \{Q_1,Q_2\}\cap
 \{\widetilde Q_1,\widetilde Q_2\}\ne\varnothing
 \ \ \Longrightarrow\ \ 
 S_e\cap S_{\widetilde e}=\varnothing.
$
\end{lemma}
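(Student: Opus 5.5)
The idea is to separate the two regions in the vertical direction. $S_e$ lies at height at most $R_j$ above the center of $Q$. Every point of $S_{\widetilde e}$ lies at height comparable to $R_k/2$ above a point $z_{\widetilde e}$ that is close to $Q$. Since $R_k\geq\mathcal L R_j\geq2^7\sqrt d\,R_j$, the second height is much larger.

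\emph{Step 1: confine $S_e$.} By \eqref{eq:region-strip-containment}, $S_e\subset\mathcal O_{j,Q}$. Hence every $x\in S_e$ satisfies
$$
x_d-c_{Q,d}\leq R_j .
$$

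\emph{Step 2: locate $z_{\widetilde e}$.} Write $\widetilde u=u_{\widetilde Q_1,\widetilde r}$ and $\widetilde v=v_{\widetilde Q_2,\widetilde t}$. Suppose, say, $\widetilde Q_1\subset Q$; the other case is symmetric.
\begin{itemize}
\item Section~\ref{sec:atoms-functions} shows that the centers lie in their level-$0$ cubes, so $\widetilde u\in\widetilde Q_1\subset Q$. Therefore $|\widetilde u_d-c_{Q,d}|\leq R_j/2$.
\item Since $\widetilde u\in\widetilde Q^L$ and $\widetilde v\in\widetilde Q^R$, and these cubes have the same $d$-th center coordinate and side $2^{-4}\tau R_k$, we get $|\widetilde u_d-\widetilde v_d|\leq2^{-4}\tau R_k$.
\end{itemize}
Combining the two bounds gives
$$
z_{\widetilde e,d}-c_{Q,d}\geq -\frac{R_j}{2}-2^{-5}\tau R_k .
$$

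\emph{Step 3: bound $S_{\widetilde e}$ from below.} Let $x=z_{\widetilde e}+\lambda\zeta\in S_{\widetilde e}$. The computation in the proof of Proposition~\ref{cor:positive-region} gives $\lambda\geq R_k/2$. The cap condition gives $\zeta_d\geq1-2^{-11}$. Hence
$$
x_d-c_{Q,d}\geq\Bigl(\tfrac12-2^{-12}-2^{-5}\tau\Bigr)R_k-\frac{R_j}{2}\geq\frac{2}{5}R_k-\frac{R_j}{2}.
$$
Using $R_k\geq\mathcal L R_j\geq2^7R_j$, the right-hand side exceeds $R_j$. So $x\notin\mathcal O_{j,Q}$, and by Step 1, $x\notin S_e$. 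This proves $S_e\cap S_{\widetilde e}=\varnothing$.

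\emph{The ``in particular'' clause.} By \eqref{eq:admissible-pair}, $Q_1\subset Q^L\subset Q$ and $Q_2\subset Q^R\subset Q$. So if some $\widetilde Q_i$ equals $Q_1$ or $Q_2$, then $\widetilde Q_i\subset Q$, and the first part applies.

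\emph{Main point to check.} The only delicate step is confirming that the atom centers $\widetilde u$ (respectively $\widetilde v$) really lie in $\widetilde Q_1$ (respectively $\widetilde Q_2$). This follows from the coordinatewise bound $3R_0/16<R_0/2$ in Section~\ref{sec:atoms-functions}. Everything else is a direct comparison of heights, with constants absorbed by the factor $\mathcal L\geq2^7$.
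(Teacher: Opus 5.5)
Your proof is correct, but it separates the two regions differently from the paper.

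The paper argues with distances from atom centers. It takes $u=u_{Q_1,r}$ and a center $w$ of $\widetilde e$ lying in $Q$, so $|u-w|\le\sqrt d\,R_j$. For $x\in S_e\cap S_{\widetilde e}$, the bounds \eqref{n1 n2 h epsilon} give $|x-u|\le2R_j$ and $|x-w|\ge R_k/4$. The triangle inequality then yields $R_k/4\le(2+\sqrt d)R_j$, which contradicts $R_k/R_j\ge\mathcal L>4(2+\sqrt d)$.

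You compare heights instead. You place $S_e$ below $c_{Q,d}+R_j$ via $S_e\subset\mathcal O_{j,Q}$. You place $S_{\widetilde e}$ above $c_{Q,d}+\tfrac25R_k-\tfrac12R_j$, using the location of $z_{\widetilde e}$, the radial bound $\lambda\ge R_k/2$, and the cap bound $\zeta_d\ge1-2^{-11}$. Your constants all check out.

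The paper's route is direction-free. It uses neither the cap, nor the containing regions, nor the relative position of $\widetilde Q^L$ and $\widetilde Q^R$. So it transfers verbatim to $d=1$ in Section~\ref{sec:dimension-one} and to the different-level step of Section~\ref{sec:multilinear}.

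Your route uses more of the specific geometry. In particular, $|\widetilde u_d-\widetilde v_d|\le2^{-4}\tau R_k$ relies on $e_1\ne e_d$, so it would not survive the passage to $d=1$. Replacing it by $|\widetilde u-\widetilde v|\le2^{-3}R_k$, which holds in every dimension, gives $x_d-c_{Q,d}\ge(\tfrac12-2^{-12}-2^{-4})R_k-\tfrac12R_j$. This still exceeds $R_j$, so that fix costs nothing. A mild benefit of your version is that it shows the separation through the same containing regions $\mathcal O_{j,Q}$ that drive the moment estimates.
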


\begin{proof}
Put $u=u_{Q_1,r}$ and choose
$
 w\in
 \{u_{\widetilde Q_1,\widetilde r},
   v_{\widetilde Q_2,\widetilde t}\}\cap Q.
$
Then $u,w\in Q$, so $|u-w|\leq\sqrt d\,R_j$.
If $x\in S_e\cap S_{\widetilde e}$, then
\eqref{n1 n2 h epsilon} gives
$$
\frac{1}{4} \leq\frac{|x-u|}{R_j}\leq2,
 \qquad
\frac{1}{4} \leq\frac{|x-w|}{R_k}\leq2.
$$
Since $R_k/R_j\geq\mathcal L>4(2+\sqrt d)$, we obtain
$$
 \frac{R_k}{4}
 \leq |x-w|
 \leq |x-u|+|u-w|
 \leq (2+\sqrt d)R_j
 <\frac{R_k}{4},
$$
a contradiction.

If the two pairs share a level-$0$ cube, then
$\widetilde Q_i=Q_\ell\subset Q$ for some
$i,\ell\in\{1,2\}$. The first assertion therefore applies.
This proves the lemma.
\end{proof}

\begin{proof}[Proof of \eqref{eq:second-moment}]
Write
\begin{align*}
 \mathrm I_{j,k}
 &=\Av_\omega\Av_{a,b}\int_{\mathbb R^d}\mathcal N_j(x)\mathcal N_k(x)\,dx
 &&(1\leq j<k\leq N),\\
 \mathrm {II}_{j,Q,\widetilde Q}
 &=\Av_\omega\Av_{a,b}\int_{\mathbb R^d}
       \mathcal N_{j,Q}(x)\mathcal N_{j,\widetilde Q}(x)\,dx
 &&(Q,\widetilde Q\in\mathcal Q_j,\ Q\ne\widetilde Q),\\
 \mathrm {III}_{j,Q}
 &=\Av_\omega\Av_{a,b}\int_{\mathbb R^d}\mathcal N_{j,Q}^2(x)\,dx
 &&(Q\in\mathcal Q_j).
\end{align*}
Expanding the square gives
\begin{equation}\label{eq:second-integral-decomposition}
 \Av_\omega\Av_{a,b}\int_{\mathbb R^d}\mathcal N^2(x)\,dx
 =2\sum_{1\leq j<k\leq N}\mathrm I_{j,k}
 +\sum_{j=1}^{N}
    \sum_{Q,\widetilde Q\in\mathcal Q_j:\,Q\ne\widetilde Q}
       \mathrm {II}_{j,Q,\widetilde Q}
 +\sum_{j=1}^{N}\sum_{Q\in\mathcal Q_j}\mathrm {III}_{j,Q}.
\end{equation}
The sum over $Q\ne\widetilde Q$ is over ordered pairs.
We now prove
\begin{equation}\label{3cases}
\begin{aligned}
 &\mathrm I_{j,k}\leq C{c_jc_k\theta^4}
 &&(1\leq j<k\leq N),\\
 &\sum_{Q,\widetilde Q\in\mathcal Q_j:\,Q\ne\widetilde Q}
       \mathrm {II}_{j,Q,\widetilde Q}=0
 &&(1\leq j\leq N),\\
 &\mathrm {III}_{j,Q}\leq C({c_j\theta^2}+{c_j^2\theta^4})R_j^d
 &&(Q\in\mathcal Q_j,\ 1\leq j\leq N).
\end{aligned}
\end{equation}

\smallskip
\noindent\emph{Case 1: different levels, $j<k$. \ We prove $\mathrm I_{j,k}\leq C{c_jc_k\theta^4}$.}

Fix $Q\in\mathcal Q_j$ and $\widetilde Q\in\mathcal Q_k$.
Expand the two local counting functions over $e\in\mathcal E_{j,Q}$
and $\widetilde e\in\mathcal E_{k,\widetilde Q}$.
If either $\widetilde Q_1$ or $\widetilde Q_2$ lies in $Q$,
Lemma~\ref{lem:radial-separation} gives
$S_e\cap S_{\widetilde e}=\varnothing$, so these terms contribute zero.

Fix four remaining level-$0$ cubes. They are distinct.
With the labels and atom indices fixed, the two indicators depend
on disjoint base-point coordinates. Thus $\Av_{a,b} [\one_{S_e}(x)\one_{S_{\widetilde e}}(x)]
 =(\Av_{a,b} \one_{S_e}(x))
   (\Av_{a,b} \one_{S_{\widetilde e}}(x))
$. Hence,
\begin{align}
 \Av_{a,b} [\one_{S_e}(x)\one_{S_{\widetilde e}}(x)]
 &= \fint_{{\frac18Q_2}} \fint_{{\frac18Q_1}}
 \one_{S_e}(x)\,da_{Q_1}\,db_{Q_2}
 \cdot  \fint_{{\frac18\widetilde Q_2}}\fint_{ {\frac18\widetilde Q_1}}
 \one_{S_{\widetilde e}}(x)\,
 da_{\widetilde Q_1}\,db_{\widetilde Q_2}
 \notag\\
 &\leq CR_0^{-4d}
 \one_{\mathcal O_{j,Q}}(x)
 \one_{\mathcal O_{k,\widetilde Q}}(x)
 \left(\iint_{Q_1\times Q_2}
 \one_{\mathcal G_j(x)}(u,v)\,du\,dv\right)
 \notag\\
 &\qquad\times
 \left(\iint_{\widetilde Q_1\times\widetilde Q_2}
 \one_{\mathcal G_k(x)}(\widetilde u,\widetilde v)\,
 d\widetilde u\,d\widetilde v\right).
 \label{eq:four-base-point-bound}
\end{align}
The last inequality applies \eqref{eq:translated-pair-bound}
to each factor. 

We now sum over the atom indices and average over the labels.
We claim that
$$
 K(Q_1,Q_2)\cap K(\widetilde Q_1,\widetilde Q_2)=\varnothing.
$$
Suppose that $P$ belongs to both sets.
Since $P\in K(Q_1,Q_2)$, we have $P\subsetneq Q$.
Since $P\in K(\widetilde Q_1,\widetilde Q_2)$,
it contains $\widetilde Q_1$ or $\widetilde Q_2$.
Hence one of these two level-$0$ cubes lies in $Q$,
a case already discarded. Thus the two sets are disjoint.

For each remaining four-tuple of cubes, the bound
\eqref{eq:four-base-point-bound} no longer depends on the labels
or atom indices. On the two selection sets the number of index
pairs is $r_jr_k$, by \eqref{eq:selected-atom-pair-count}.
The disjoint label-coordinate sets give the average $c_jc_k$
by \eqref{eq:cross-level-label-count}. Hence summing indices and
then averaging labels contributes exactly $c_jc_kr_jr_k$.

The product domains for distinct four-tuples are disjoint.
We may drop the restrictions $\widetilde Q_i\not\subset Q$ after
this averaging and enlarge their union to
$Q^L\times Q^R\times\widetilde Q^L\times\widetilde Q^R$.
We obtain
\begin{align*}
 &\Av_\omega\Av_{a,b}
    [\mathcal N_{j,Q}(x)\mathcal N_{k,\widetilde Q}(x)]\\
 &\leq Cc_jc_kr_jr_kR_0^{-4d}
   \one_{\mathcal O_{j,Q}}(x)\one_{\mathcal O_{k,\widetilde Q}}(x)\\
 &\quad\times\left(\iint_{Q^L\times Q^R}
    \one_{\mathcal G_j(x)}(u,v)\,du\,dv\right)
   \left(\iint_{\widetilde Q^L\times\widetilde Q^R}
    \one_{\mathcal G_k(x)}(\widetilde u,\widetilde v)
                        \,d\widetilde u\,d\widetilde v\right)\\
 &\leq Cc_jc_kr_jr_kR_0^{-4d}s^2R_j^{2d-1}R_k^{2d-1}
   \one_{\mathcal O_{j,Q}}(x)\one_{\mathcal O_{k,\widetilde Q}}(x)\\
 &=Cc_jc_k\theta^4
   \one_{\mathcal O_{j,Q}}(x)\one_{\mathcal O_{k,\widetilde Q}}(x).
\end{align*}
The second inequality uses \eqref{eq:one-scale} at the two radii.
For the last equality, \eqref{eq:count-identity} gives
$$
 r_jR_0^{-2d}sR_j^{2d-1}=\theta^2,
 \qquad r_kR_0^{-2d}sR_k^{2d-1}=\theta^2.
$$

At each level the containing regions are disjoint
(Lemma~\ref{lem: O disjoint}). Summing over $Q,\widetilde Q$ and
using \eqref{eq:output-overlap}, we conclude that
\begin{align*}
 \mathrm I_{j,k}
 &\leq Cc_jc_k\theta^4\int
   \bigg(\sum_{Q\in\mathcal Q_j}\one_{\mathcal O_{j,Q}}\bigg)
   \bigg(\sum_{\widetilde Q\in\mathcal Q_k}
                          \one_{\mathcal O_{k,\widetilde Q}}\bigg)\,dx
 \leq Cc_jc_k\theta^4\sum_{Q\in\mathcal Q_j}|\mathcal O_{j,Q}|
 \leq Cc_jc_k\theta^4.
\end{align*}
This proves Case 1.

\smallskip

\noindent\emph{Case 2: the same level and different cubes.}

By \eqref{eq:region-strip-containment},
$\mathcal N_{j,Q}$ is supported in $\mathcal O_{j,Q}$.
Lemma \ref{lem: O disjoint} gives
$
 \mathcal N_{j,Q}(x)\mathcal N_{j,\widetilde Q}(x)=0
 \  (Q\ne\widetilde Q)
$
for every choice of labels and base points.
Hence,
$$
 \mathrm{II}_{j,Q,\widetilde Q}
 =\Av_\omega\Av_{a,b}\int_{\mathbb R^d}
       \mathcal N_{j,Q}(x)\mathcal N_{j,\widetilde Q}(x)\,dx
 =0,
$$
and hence
$
 \sum_{Q,\widetilde Q\in\mathcal Q_j:\,Q\ne\widetilde Q}
 \mathrm{II}_{j,Q,\widetilde Q}=0$.
This proves the estimate in Case 2.

\smallskip
\noindent\emph{Case 3: the same level and the same cube.}

Fix $Q\in\mathcal Q_j$. By \eqref{eq:cube-counting-functions},
\begin{align*}
 \mathrm{III}_{j,Q}
 &=\Av_\omega\Av_{a,b}\int_{\mathbb R^d}
       \bigg(\sum_{e\in\mathcal E_{j,Q}}\one_{S_e}(x)\bigg)^2\,dx
 =\Av_\omega\Av_{a,b}\sum_{e,\widetilde e\in\mathcal E_{j,Q}}
       \int_{\mathbb R^d}
       \one_{S_e}(x)\one_{S_{\widetilde e}}(x)\,dx\\
 &=\Av_\omega\Av_{a,b}\sum_{e,\widetilde e\in\mathcal E_{j,Q}}
       |S_e\cap S_{\widetilde e}|.
\end{align*}
The sums are finite, and the double sum is over ordered pairs,
including $e=\widetilde e$.
Since $Q_1,\widetilde Q_1\subset Q^L$,
$Q_2,\widetilde Q_2\subset Q^R$, and $Q^L\cap Q^R=\varnothing$,
we therefore divide the sum into three parts:
$$
\begin{aligned}
 \text{Part (i)}\quad&
 Q_1=\widetilde Q_1,\quad Q_2=\widetilde Q_2;\\
 \text{Part (ii)}\quad&
 Q_1=\widetilde Q_1,\ Q_2\ne\widetilde Q_2
 \quad\text{or}\quad
 Q_1\ne\widetilde Q_1,\ Q_2=\widetilde Q_2;\\
 \text{Part (iii)}\quad&
 Q_1\ne\widetilde Q_1,\quad Q_2\ne\widetilde Q_2.
\end{aligned}
$$

\smallskip
\noindent\emph{Part {\rm(i):} $Q_1=\widetilde Q_1,\ Q_2=\widetilde Q_2$}

By Lemma~\ref{lem:separation}, $S_e\cap S_{\widetilde e}$ is empty
unless $(r,t)=(\widetilde r,\widetilde t)$.
Thus, by \eqref{eq:first-cube-integral},
this part is
$$
 \Av_\omega\Av_{a,b}\sum_{e\in\mathcal E_{j,Q}}|S_e|
 =\Av_\omega\Av_{a,b}\int_{\mathbb R^d}\mathcal N_{j,Q}(x)\,dx
 \leq C{c_j\theta^2}R_j^d.
$$

\smallskip
\noindent\emph{Part {\rm(ii):} $ Q_1=\widetilde Q_1,\ Q_2\ne\widetilde Q_2
 \ \text{or}\ 
 Q_1\ne\widetilde Q_1,\ Q_2=\widetilde Q_2$}

First suppose $Q_1=\widetilde Q_1$ and
$Q_2\ne\widetilde Q_2$. Fix the labels and
$e=((Q_1,r),(Q_2,t))\in\mathcal E_{j,Q}$.
Keep $a_{Q_1}$ and $b_{Q_2}$ fixed, so that $S_e$ is fixed.
For every selected partner $\widetilde Q_2$, the selection
conditions give $Z_{Q_1}-Z_{\widetilde Q_2}=j$. Since $Z_{Q_1}-Z_{Q_2}=j$, we then have
$
 Z_{\widetilde Q_2}=Z_{Q_2}$, which further gives
$ n_{\widetilde Q_2}=n_{Q_2}.
$
Hence all these partners have the same atom-index ranges
$$
 0\leq\widetilde r<m_{Q_1},\qquad
 0\leq\widetilde t<n_{Q_2}.
$$

Fix one such $(\widetilde r,\widetilde t)$ and write
$\widetilde e=((Q_1,\widetilde r),(\widetilde Q_2,\widetilde t))$.
The left endpoint $u_{Q_1,\widetilde r}$ does not depend on
$\widetilde Q_2$.
For each selected $\widetilde Q_2$, change variables
$v=b_{\widetilde Q_2}+{2^4}\widetilde t s\,e_d$.
By \eqref{eq:centers-diameter}, the translated domain lies in
$\widetilde Q_2$.
For $x\in S_e\subset\mathcal O_{j,Q}$,
\eqref{eq:region-strip-containment} gives
\begin{align}\label{part ii sum widetilde Q2}
 \sum_{\substack{\widetilde Q_2\in\mathcal Q_0:\ \widetilde Q_2\ne Q_2,\\ (Q_1,\widetilde Q_2)\in\mathcal P_{j,Q}}}
 \fint_{{\frac18\widetilde Q_2}}
 \one_{S_{\widetilde e}}(x)
 \,db_{\widetilde Q_2}
 &\leq CR_0^{-d}
 \sum_{\substack{\widetilde Q_2\in\mathcal Q_0:\ \widetilde Q_2\ne Q_2,
 \\ (Q_1,\widetilde Q_2)\in\mathcal P_{j,Q}}}
 \int_{\widetilde Q_2}
 \one_{\mathcal G_j(x)}(u_{Q_1,\widetilde r},v)\,dv\\
 &\leq CR_0^{-d}\int_{Q^R}
 \one_{\mathcal G_j(x)}(u_{Q_1,\widetilde r},v)\,dv
 \leq CR_0^{-d}sR_j^{d-1}\notag\\
& =C{\mathcal L^{dj}}\frac{s}{R_j}.\notag
\end{align}
The second inequality combines integrals of the same function
over disjoint cubes contained in $Q^R$.
The last inequality uses \eqref{eq:one-variable-coarea}, since
$u_{Q_1,\widetilde r}\in Q_1\subset Q^L$.

The preceding bound is uniform in
$0\leq\widetilde r<m_{Q_1}$ and
$0\leq\widetilde t<n_{Q_2}$. Hence, by \eqref{part ii sum widetilde Q2},
\begin{align}\label{part ii partial sum}
 &\sum_{\widetilde r=0}^{m_{Q_1}-1}
  \sum_{\widetilde t=0}^{n_{Q_2}-1}
  \sum_{\substack{\widetilde Q_2\in\mathcal Q_0:\ \widetilde Q_2\ne Q_2,\\ (Q_1,\widetilde Q_2)\in\mathcal P_{j,Q}}}
 \fint_{{\frac18\widetilde Q_2}}
 |S_e\cap S_{\widetilde e}|
 \,db_{\widetilde Q_2}\\
 &=\sum_{\widetilde r=0}^{m_{Q_1}-1}
   \sum_{\widetilde t=0}^{n_{Q_2}-1}
   \int_{S_e}\Biggl[
   \sum_{\substack{\widetilde Q_2\in\mathcal Q_0:\ \widetilde Q_2\ne Q_2,\\ (Q_1,\widetilde Q_2)\in\mathcal P_{j,Q}}}
   \fint_{{\frac18\widetilde Q_2}}
   \one_{S_{\widetilde e}}(x)
   \,db_{\widetilde Q_2}
   \Biggr]\,dx\notag\\
 &\leq\sum_{\widetilde r=0}^{m_{Q_1}-1}
       \sum_{\widetilde t=0}^{n_{Q_2}-1}
       \int_{S_e}C{\mathcal L^{dj}}\frac{s}{R_j}\,dx
 =C{\mathcal L^{dj}}\frac{s}{R_j}m_{Q_1}n_{Q_2}|S_e|\notag\\
 &=C{\mathcal L^{dj}}r_j\frac{s}{R_j}|S_e|
 =C\theta^2{\mathcal L^{-dj}}|S_e|,\notag
\end{align}
where the last two equalities use $m_{Q_1}n_{Q_2}=r_j$
and \eqref{eq:count-identity}, respectively.
Averaging \eqref{part ii partial sum} over the two base points
of $e$ gives
$$
 \Av_{a,b}\sum_{\substack{\widetilde e\in\mathcal E_{j,Q}:\\
       \widetilde Q_1=Q_1,\ \widetilde Q_2\ne Q_2}}
 |S_e\cap S_{\widetilde e}|
 \leq C\theta^2\mathcal L^{-dj}\Av_{a,b}|S_e|.
$$
Summing over $e$ and then averaging the labels, we obtain
\begin{align*}
 &\Av_\omega\Av_{a,b}
 \sum_{\substack{
 e,\widetilde e\in\mathcal E_{j,Q}:\\
 \widetilde Q_1=Q_1,\ \widetilde Q_2\ne Q_2}}
 |S_e\cap S_{\widetilde e}|
 =\Av_\omega \bigg(\sum_{e\in\mathcal E_{j,Q}}\Av_{a,b} 
 \sum_{\substack{
 \widetilde e\in\mathcal E_{j,Q}:\\
 \widetilde Q_1=Q_1,\ \widetilde Q_2\ne Q_2}}
 |S_e\cap S_{\widetilde e}|\bigg)
\\
 &\qquad\leq C\theta^2{\mathcal L^{-dj}}
       \Av_\omega \sum_{e\in\mathcal E_{j,Q}}\Av_{a,b} |S_e|
=C\theta^2{\mathcal L^{-dj}}
       \Av_\omega\Av_{a,b}\sum_{e\in\mathcal E_{j,Q}}|S_e|\\
&\qquad =C\theta^2{\mathcal L^{-dj}}
       \Av_\omega\Av_{a,b}\int_{\mathbb R^d}\mathcal N_{j,Q}(x)\,dx
 \leq C\theta^2{\mathcal L^{-dj}}{c_j\theta^2}R_j^d
 \leq C{c_j\theta^2}R_j^d.
\end{align*}
The last line uses \eqref{eq:first-cube-integral}
and $\theta^2{\mathcal L^{-dj}}\leq1$.

If $Q_2=\widetilde Q_2$ and $Q_1\ne\widetilde Q_1$,
the selection conditions give
$Z_{\widetilde Q_1}=Z_{Q_1}$ and hence
$m_{\widetilde Q_1}=m_{Q_1}$ and $n_{\widetilde Q_1}=n_{Q_1}$.
The same argument, with $u$ and $v$ interchanged,
gives the bound $C\theta^2{\mathcal L^{-dj}}{c_j\theta^2}R_j^d$.

This proves the required bound when exactly one endpoint cube
is shared.

\smallskip
\noindent\emph{Part {\rm(iii):} $Q_1\ne\widetilde Q_1,\ \ Q_2\ne\widetilde Q_2$}

We first consider the terms for which
$
 K(Q_1,Q_2)\cap K(\widetilde Q_1,\widetilde Q_2)=\varnothing.
$
Apply \eqref{eq:four-base-point-bound} with $k=j$ and
$\widetilde Q=Q$. As in Case 1, enlarge the translated base-point
domains before summing the atom indices and averaging the labels.
The number of index pairs on the selection sets is $r_j^2$, and
the label average is $c_j^2$. Combining the disjoint product domains
and applying \eqref{eq:one-scale} gives
\begin{align}\label{part iii 1}
 \Av_\omega\Av_{a,b}
 \sum_{\substack{e,\widetilde e\in\mathcal E_{j,Q}:\\
    Q_1\ne\widetilde Q_1,\ Q_2\ne\widetilde Q_2\\
    K(Q_1,Q_2)\cap K(\widetilde Q_1,\widetilde Q_2)=\varnothing}}
 |S_e\cap S_{\widetilde e}|
 &\leq Cc_j^2r_j^2R_0^{-4d}\int_{\mathcal O_{j,Q}}
   \left(\iint_{Q^L\times Q^R}
      \one_{\mathcal G_j(x)}(u,v)\,du\,dv\right)^2dx
 \notag\\
 &\leq Cc_j^2r_j^2R_0^{-4d}s^2R_j^{4d-2}R_j^d
 =Cc_j^2\theta^4R_j^d.
\end{align}
Here $|\mathcal O_{j,Q}|\lesssim R_j^d$ bounds the measure,
and \eqref{eq:count-identity} gives the last equality.

It remains to consider the terms for which
$K(Q_1,Q_2)\cap K(\widetilde Q_1,\widetilde Q_2)\ne\varnothing$.
Choose a cube $P$ in this intersection. By definition,
$P=Q_i^{(h)}=\widetilde Q_\ell^{(h)}$ for some $0\leq h\leq j-1$ and
$i,\ell\in\{1,2\}$.
Taking the level-$(j-1)$ ancestor gives
$Q_i^{(j-1)}=\widetilde Q_\ell^{(j-1)}$.
By Lemma~\ref{lem:first-common-ancestor}, no level-$(j-1)$
subcube of $Q$ meets both $Q^L$ and $Q^R$.
Hence $i=\ell$, so we have
$$
 Q_1^{(j-1)}=\widetilde Q_1^{(j-1)}
 \quad\text{or}\quad
 Q_2^{(j-1)}=\widetilde Q_2^{(j-1)}.
$$

We first estimate the terms satisfying $Q_1^{(j-1)}=\widetilde Q_1^{(j-1)}$.
Note that
$$
\begin{aligned}
 \Av_\omega\bigl[m_{Q_1}n_{Q_2}m_{\widetilde Q_1}n_{\widetilde Q_2}
 \one_{\{Z_{Q_1}-Z_{Q_2}=j\}}
 \one_{\{Z_{\widetilde Q_1}-Z_{\widetilde Q_2}=j\}}\bigr]
 &=r_j^2\,\Av_\omega\bigl[\one_{\{Z_{Q_1}-Z_{Q_2}=j\}}
 \one_{\{Z_{\widetilde Q_1}-Z_{\widetilde Q_2}=j\}}\bigr]\\
 &\leq r_j^2\,\Av_\omega\one_{\{Z_{Q_1}-Z_{Q_2}=j\}}
 =c_jr_j^2.
\end{aligned}
$$
Applying \eqref{eq:four-base-point-bound} and the label bound just obtained, as in \eqref{part iii 1}, gives
\begin{align*}
 &\Av_\omega\Av_{a,b}
 \sum_{\substack{
 e,\widetilde e\in\mathcal E_{j,Q}:\\
 Q_1\ne\widetilde Q_1,\ Q_2\ne\widetilde Q_2\\
 Q_1^{(j-1)}=\widetilde Q_1^{(j-1)}}}
 |S_e\cap S_{\widetilde e}|\\
 &\leq CR_0^{-4d}
 \int_{\mathcal O_{j,Q}}
 \sum_{\substack{
 (Q_1,Q_2),(\widetilde Q_1,\widetilde Q_2)\in\mathcal P_Q:\\
 Q_1\ne\widetilde Q_1,\ Q_2\ne\widetilde Q_2,\
 Q_1^{(j-1)}=\widetilde Q_1^{(j-1)}}}
 c_jr_j^2 \\
 &\qquad\qquad\times
 \left(\iint_{Q_1\times Q_2}
 \one_{\mathcal G_j(x)}(u,v)
 \,du\,dv\right)
 \left(\iint_{\widetilde Q_1\times\widetilde Q_2}
 \one_{\mathcal G_j(x)}(\widetilde u,\widetilde v)
 \,d\widetilde u\,d\widetilde v\right)\,dx\\
 &\leq Cc_jr_j^2R_0^{-4d}
 \int_{\mathcal O_{j,Q}}
 \sum_{Q'\in\mathcal Q_{j-1}:\,Q'\subset Q}
 \iint_{(Q'\cap Q^L)\times(Q'\cap Q^L)}
 \left(\int_{Q^R}
 \one_{\mathcal G_j(x)}(u,v)\,dv\right)\\
 &\qquad\qquad\times
 \left(\int_{Q^R}
 \one_{\mathcal G_j(x)}(\widetilde u,\widetilde v)\,d\widetilde v\right)
 \,du\,d\widetilde u\,dx\\
 &\leq Cc_jr_j^2R_0^{-4d}
 |\mathcal O_{j,Q}|s^2R_j^{2d-2}
 \sum_{Q'\in\mathcal Q_{j-1}:\,Q'\subset Q}|Q'|^2\\
 &\leq \frac{Cc_j}{{\mathcal L^d}}
 r_j^2R_0^{-4d}s^2R_j^{4d-2}R_j^d
 =\frac{Cc_j}{{\mathcal L^d}}\theta^4R_j^d.
\end{align*}
The second inequality groups the disjoint product domains
by the common ancestor
$Q'=Q_1^{(j-1)}=\widetilde Q_1^{(j-1)}$ and enlarges their union.
The third inequality uses \eqref{eq:one-variable-coarea}
for each right endpoint.
The last two steps use \eqref{eq:count-identity},
$|\mathcal O_{j,Q}|\leq CR_j^d$ and
$$
 \sum_{Q'\in\mathcal Q_{j-1}:\,Q'\subset Q}|Q'|^2
 ={\mathcal L^d}\left(\frac{R_j}{\mathcal L}\right)^{2d}
 ={\frac{R_j^{2d}}{\mathcal L^d}}.
$$

The same argument, with $u$ and $v$ interchanged, gives
the same bound when
$Q_2^{(j-1)}=\widetilde Q_2^{(j-1)}$.
Moreover, Lemma~\ref{lem:local-limit} and ${\mathcal L^d}\geq N\geq j$
give
$$
 \frac{c_j}{{\mathcal L^d}}
 =c_j^2{\frac1{\mathcal L^dc_j}}
 \leq Cc_j^2{\frac{j}{\mathcal L^d}}
 \leq Cc_j^2.
$$
Consequently,
\begin{align*}
 \Av_\omega\Av_{a,b}
 \sum_{\substack{
 e,\widetilde e\in\mathcal E_{j,Q}:\\
 Q_1\ne\widetilde Q_1,\ Q_2\ne\widetilde Q_2\\
 K(Q_1,Q_2)\cap K(\widetilde Q_1,\widetilde Q_2)\ne\varnothing}}
 |S_e\cap S_{\widetilde e}|
 &\leq
 \sum_{i=1}^2\Av_\omega\Av_{a,b}
 \sum_{\substack{
 e,\widetilde e\in\mathcal E_{j,Q}:\\
 Q_1\ne\widetilde Q_1,\ Q_2\ne\widetilde Q_2\\
 Q_i^{(j-1)}=\widetilde Q_i^{(j-1)}}}
 |S_e\cap S_{\widetilde e}|
 \\
 &\leq \frac{Cc_j}{{\mathcal L^d}}\theta^4R_j^d
 \leq C{c_j^2\theta^4}R_j^d.
\end{align*}

Combining all parts of Case 3 proves
$ \mathrm {III}_{j,Q}
 \leq C({c_j\theta^2}+{c_j^2\theta^4})R_j^d.$
This completes the three estimates in \eqref{3cases}.

Combining \eqref{eq:second-integral-decomposition} and \eqref{3cases} and recalling
that $\#\mathcal Q_j=R_j^{-d}$, 
we have
\begin{align*}
 \Av_\omega\Av_{a,b}\int_{\mathbb R^d}\mathcal N^2(x)\,dx
 &\leq
 2C\sum_{1\leq j<k\leq N}{c_jc_k\theta^4}
 +C\sum_{j=1}^{N}
       \sum_{Q\in\mathcal Q_j}({c_j\theta^2}+{c_j^2\theta^4})R_j^d\\
 &{=C\theta^2\sum_{j=1}^{N}c_j
   +C\theta^4\left(
       2\sum_{1\leq j<k\leq N}c_jc_k
       +\sum_{j=1}^{N}c_j^2
      \right)}\\
 &{=C\left[
     \theta^2\sum_{j=1}^{N}c_j
     +\theta^4\left(\sum_{j=1}^{N}c_j\right)^2
   \right]}\\
 &{\leq C({\rho}+{\rho}^2)}.
\end{align*}
This proves \eqref{eq:second-moment} and completes the proof
of Proposition~\ref{prop:moments}.
\end{proof}

\section{Completion of the bilinear proof}
\label{sec:proof-completion}

We first prove a sign estimate for fixed labels and base points.
We then choose labels and base points so that the union of the
selected regions has measure bounded below, and compare the norms.

\subsection{A common choice of signs}

Fix the labels and base points.
Let
$$
\mathcal I_f=\{(Q_1,r):Q_1\in \mathcal Q_0, 0\le r< m_{Q_1}\}
\qquad{\rm and}\qquad
\mathcal I_g=\{(Q_2,t):Q_2\in \mathcal Q_0, 0\le t< n_{Q_2}\} 
$$
be the finite index sets of the atoms in $f$ and $g$, respectively.
For $i=(Q_1,r)\in\mathcal I_f$ and $k=(Q_2,t)\in\mathcal I_g$,
write $F_i=F_{Q_1,r}$ and $G_k=G_{Q_2,t}$.
Then \eqref{eq:signed-functions} becomes
$$ 
f=\sum_{i\in\mathcal I_f}\varepsilon_iF_i
 \qquad{\rm and}\qquad
 g=\sum_{k\in\mathcal I_g}\delta_kG_k.
$$
We will choose these signs after fixing one atom pair at each
point of the union
\begin{equation}\label{eq:union-definition}
 U:=\{x:\mathcal N(x)>0\}
   =\bigcup_{j=1}^{N}\bigcup_{e\in\mathcal E_j}S_e.
\end{equation}

The family of regions is finite. Fix an ordering and, for each
$x\in U$, let $e(x)$ be the first index for which $x\in S_{e(x)}$.
Write
$$
 e(x)=(i(x),k(x))\in\mathcal E_{j(x)},
$$
where $j(x)$ is the level of $e(x)$.
Each $S_e$ is Borel, so $e(x)$ and $j(x)$ are measurable.
By Proposition~\ref{cor:positive-region}, with the constant
$c_0>0$ there, which is independent of $N$, the labels, and the base
points,
\begin{equation}\label{eq:chosen-pair-lower-bound}
 \A_{R_{j(x)}}(F_{i(x)},G_{k(x)})(x)\geq c_0,
 \qquad x\in U.
\end{equation}

For $H:\{-1,1\}^{\mathcal I_f}\to\mathbb R$ and
$H':\{-1,1\}^{\mathcal I_g}\to\mathbb R$, define the normalized
finite sums
$$
 \Av_{\varepsilon}H
 =2^{-\#\mathcal I_f}
   \sum_{\varepsilon\in \{-1,1\}^{\mathcal I_f}}H(\varepsilon),\qquad
 \Av_{\delta}H'
 =2^{-\#\mathcal I_g}
   \sum_{\delta\in
                   \{-1,1\}^{\mathcal I_g}}H'(\delta).
$$
Bilinearity gives
\begin{equation}\label{eq:rademacher-sum}
 \A_{R_{j(x)}}(f,g)(x)
 =\sum_{i\in\mathcal I_f}\sum_{k\in\mathcal I_g}
 \varepsilon_i\delta_k
 \A_{R_{j(x)}}(F_i,G_k)(x).
\end{equation}
This sum contains every pair of atoms at the chosen radius.

\begin{lemma}\label{lem:sign-estimates}
For every $x\in U$,
\begin{align}
 \Av_{\varepsilon}\Av_{\delta}
 |\A_{R_{j(x)}}(f,g)(x)|^2
 &\geq c_0^2,
 \label{eq:rademacher-second-lower}\\
 \Av_{\varepsilon}\Av_{\delta}
 |\A_{R_{j(x)}}(f,g)(x)|^4
 &\leq9\left(
 \Av_{\varepsilon}\Av_{\delta}
 |\A_{R_{j(x)}}(f,g)(x)|^2\right)^2.
 \label{eq:rademacher-fourth}
\end{align}
\end{lemma}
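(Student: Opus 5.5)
Fix $x\in U$ and write $a_{ik}=\A_{R_{j(x)}}(F_i,G_k)(x)$, a real matrix indexed by $\mathcal I_f\times\mathcal I_g$; it is real because $\phi$ and the surface measure are real. By \eqref{eq:rademacher-sum}, $X:=\A_{R_{j(x)}}(f,g)(x)=\sum_{i,k}\varepsilon_i\delta_k a_{ik}$ is a Rademacher chaos of bilinear (decoupled) type in two independent families of signs. The plan is to compute the second moment exactly and bound the fourth moment by successive one-family Khintchine estimates with the sharp constant $3$; no geometric input beyond \eqref{eq:chosen-pair-lower-bound} is needed.

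\emph{Second moment.} Expanding $X^2$ and using $\Av_\varepsilon(\varepsilon_i\varepsilon_{i'})=\one_{\{i=i'\}}$ and the same for $\delta$ (the families are independent under the product average), we get $\Av_\varepsilon\Av_\delta X^2=\sum_{i,k}a_{ik}^2$. This sum contains the diagonal term with $(i,k)=e(x)$, which by \eqref{eq:chosen-pair-lower-bound} is at least $c_0^2$. All other terms are nonnegative, which proves \eqref{eq:rademacher-second-lower}. No cancellation among other pairs can hurt here; this is exactly why the signs are randomized.

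\emph{Fourth moment.} Fix $\delta$ and put $b_i(\delta)=\sum_k\delta_k a_{ik}$, so $X=\sum_i\varepsilon_i b_i$. The exact fourth-moment identity for Rademacher sums gives
\begin{equation*}
\Av_\varepsilon X^4=3\Bigl(\sum_i b_i^2\Bigr)^2-2\sum_i b_i^4\leq3\Bigl(\sum_i b_i^2\Bigr)^2.
\end{equation*}
Next we average in $\delta$. By Minkowski's inequality in $L^2(d\delta)$,
\begin{equation*}
\Bigl(\Av_\delta\bigl(\textstyle\sum_i b_i^2\bigr)^2\Bigr)^{1/2}\leq\sum_i\bigl(\Av_\delta b_i^4\bigr)^{1/2}.
\end{equation*}
The same one-family bound gives $\Av_\delta b_i^4\leq3(\sum_k a_{ik}^2)^2$. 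Hence
\begin{equation*}
\Av_\delta\Bigl(\sum_i b_i^2\Bigr)^2\leq3\Bigl(\sum_{i,k}a_{ik}^2\Bigr)^2,
\end{equation*}
and altogether $\Av_\varepsilon\Av_\delta X^4\leq9(\sum_{i,k}a_{ik}^2)^2=9(\Av_\varepsilon\Av_\delta X^2)^2$, which is \eqref{eq:rademacher-fourth}.

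The only step needing care is this iterated hypercontractive bound: the constant must come out as exactly $9=3\cdot3$, and the Minkowski step must be done in the right order, with the norm taken in $\delta$ of a sum over $i$. Measurability in $x$ and finiteness are automatic, since all index sets are finite.
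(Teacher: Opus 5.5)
Your proof is correct and follows essentially the same route as the paper: the second moment is computed exactly by orthogonality, and the one-family identity $\Av_\varepsilon(\sum_i\varepsilon_i b_i)^4=3(\sum_i b_i^2)^2-2\sum_i b_i^4$ is applied once in $\varepsilon$ and once in $\delta$. Your Minkowski step in $L^2(d\delta)$ is equivalent to the paper's expansion of $\Av_\delta(\sum_i a_i^2)^2$ followed by Cauchy--Schwarz on each pair $(i,i')$, and both yield the constant $3\cdot 3=9$.
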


\begin{proof}
Fix $x\in U$. For $i,i'\in\mathcal I_f$ and
$k,k'\in\mathcal I_g$, direct summation gives
$$
 \Av_{\varepsilon}\Av_{\delta}
 (\varepsilon_i\varepsilon_{i'}\delta_k\delta_{k'})
 =\one_{\{i=i'\}}\one_{\{k=k'\}}.
$$
Expanding the square in \eqref{eq:rademacher-sum}, we obtain
\begin{align*}
 \Av_{\varepsilon}\Av_{\delta}
 |\A_{R_{j(x)}}(f,g)(x)|^2
 &=\sum_{i\in\mathcal I_f}\sum_{k\in\mathcal I_g}
   |\A_{R_{j(x)}}(F_i,G_k)(x)|^2
 \geq|\A_{R_{j(x)}}(F_{i(x)},G_{k(x)})(x)|^2
 \geq c_0^2.
\end{align*}
The last step uses \eqref{eq:chosen-pair-lower-bound}.
This proves \eqref{eq:rademacher-second-lower}.

To prove \eqref{eq:rademacher-fourth}, first fix $\delta$ and put
$$
 a_i=\sum_{k\in\mathcal I_g}\delta_k
       \A_{R_{j(x)}}(F_i,G_k)(x),
 \qquad i\in\mathcal I_f.
$$
These numbers are real. In the fourth-power expansion, a term
has zero average whenever some sign occurs an odd number of times.
Hence
\begin{align*}
 \Av_\varepsilon\left(\sum_i\varepsilon_i a_i\right)^4
 &=3\left(\sum_i a_i^2\right)^2-2\sum_i a_i^4
 \leq3\left(\sum_i a_i^2\right)^2.
\end{align*}
Applying the same identity to the $\delta$ sum defining each $a_i$
gives
$$
 \Av_\delta a_i^4
 \leq3\left(\sum_{k\in\mathcal I_g}
       |\A_{R_{j(x)}}(F_i,G_k)(x)|^2\right)^2.
$$
We now expand the square and apply Cauchy--Schwarz to each pair
of indices:
\begin{align*}
 \Av_\delta\left(\sum_i a_i^2\right)^2
 &=\sum_{i,i'}\Av_\delta(a_i^2a_{i'}^2)
 \leq\sum_{i,i'}
       (\Av_\delta a_i^4)^{1/2}
       (\Av_\delta a_{i'}^4)^{1/2}\\
 &\leq3\left(\sum_{i\in\mathcal I_f}
                 \sum_{k\in\mathcal I_g}
       |\A_{R_{j(x)}}(F_i,G_k)(x)|^2\right)^2.
\end{align*}
Combining these estimates with the quadratic identity above, we obtain
\begin{align*}
 \Av_{\varepsilon}\Av_{\delta}
 |\A_{R_{j(x)}}(f,g)(x)|^4
 &\leq3\Av_\delta\left(\sum_i a_i^2\right)^2
 \leq9\left(\sum_{i\in\mathcal I_f}
                 \sum_{k\in\mathcal I_g}
       |\A_{R_{j(x)}}(F_i,G_k)(x)|^2\right)^2\\
 &=9\left(\Av_{\varepsilon}\Av_{\delta}
       |\A_{R_{j(x)}}(f,g)(x)|^2\right)^2.
\end{align*}
This proves \eqref{eq:rademacher-fourth}. See also \cite{Bonami}.
\end{proof}

The above estimates yield a common choice of signs. 

\begin{lemma}\label{lem:sign-selection}
For every $x\in U$,
\begin{equation}\label{eq:pointwise-sign-probability}
 \Av_{\varepsilon}\Av_{\delta}
 \one_{\{|\A_{R_{j(x)}}(f,g)(x)|>{c_0\over\sqrt2}\}}\geq\frac1{36}.
\end{equation}
Consequently, one choice of all signs satisfies
\begin{equation}\label{eq:global-sign-level}
 \left|\left\{x\in U:
 |\A_{R_{j(x)}}(f,g)(x)|>{c_0\over\sqrt2}\right\}\right|\geq\frac{|U|}{36}.
\end{equation}
\end{lemma}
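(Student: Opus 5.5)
The plan is to derive \eqref{eq:pointwise-sign-probability} from Lemma~\ref{lem:sign-estimates} by the Paley--Zygmund inequality, and then obtain \eqref{eq:global-sign-level} by integrating in $x$ and choosing a sign configuration that does at least as well as the average.

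Fix $x\in U$ and put $X=\A_{R_{j(x)}}(f,g)(x)$, viewed as a function of the signs $(\varepsilon,\delta)$. Set $m_2=\Av_\varepsilon\Av_\delta X^2$. By \eqref{eq:rademacher-second-lower} we have $m_2\geq c_0^2$, so $c_0^2/2\leq m_2/2$. Splitting according to whether $X^2>m_2/2$ or not, and applying Cauchy--Schwarz on the event $\{X^2>m_2/2\}$, gives
$$
 m_2\leq\frac{m_2}{2}+\bigl(\Av_\varepsilon\Av_\delta X^4\bigr)^{1/2}
 \bigl(\Av_\varepsilon\Av_\delta\one_{\{X^2>m_2/2\}}\bigr)^{1/2}.
$$
Inserting \eqref{eq:rademacher-fourth}, namely $\Av X^4\leq 9m_2^2$, we get
$$
 \Av_\varepsilon\Av_\delta\one_{\{X^2>m_2/2\}}\geq\frac{(m_2/2)^2}{9m_2^2}=\frac1{36}.
$$
Since $m_2/2\geq c_0^2/2$, the event $\{X^2>m_2/2\}$ is contained in $\{|X|>c_0/\sqrt2\}$, and this proves \eqref{eq:pointwise-sign-probability}.

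For the global statement, note that $x\mapsto e(x)$ is measurable and takes finitely many values. Hence, for each fixed choice of signs, the set $\{x\in U:|\A_{R_{j(x)}}(f,g)(x)|>c_0/\sqrt2\}$ is measurable, since $\A_R(f,g)$ is continuous for $f,g\in C_c^\infty$. The sign space is finite, so Fubini (here just a finite sum exchanged with an integral over $U$) gives
$$
 \Av_\varepsilon\Av_\delta\bigl|\{x\in U:|X(x)|>c_0/\sqrt2\}\bigr|
 =\int_U\Av_\varepsilon\Av_\delta\one_{\{|X(x)|>c_0/\sqrt2\}}\,dx\geq\frac{|U|}{36}.
$$
Some sign choice is at least as large as the average, and that choice yields \eqref{eq:global-sign-level}.

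The only delicate point is that $j(x)$ and $e(x)$ are fixed before the signs are chosen. This matters because the pointwise estimate must hold for a radius that does not depend on the signs, and the construction already arranges this. No step is a real obstacle; the Paley--Zygmund constant $1/36$ is exactly what falls out of the factor $9$ in \eqref{eq:rademacher-fourth}.
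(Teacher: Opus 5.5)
Your proposal is correct and is essentially the paper's argument: a Paley--Zygmund step using the two moment bounds of Lemma~\ref{lem:sign-estimates}, then exchanging the finite sign average with the integral over $U$ and taking a sign choice that does at least as well as the average. The only cosmetic difference is that the paper thresholds directly at $c_0/\sqrt2$ and bounds $c_0^2/2\leq\frac12\Av_\varepsilon\Av_\delta|\A_{R_{j(x)}}(f,g)(x)|^2$, whereas you threshold at $\sqrt{m_2/2}$ and then enlarge the event; this is the same computation.
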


\begin{proof}
Fix $x\in U$ and put
$$
 \Gamma_x=\left\{(\varepsilon,\delta)\in \{-1,1\}^{\mathcal I_f}\times
                   \{-1,1\}^{\mathcal I_g}:
 |\A_{R_{j(x)}}(f,g)(x)|>{c_0\over\sqrt2}\right\}.
$$
Split the finite sum over $\Gamma_x$ and its complement, then
apply Cauchy--Schwarz and Lemma~\ref{lem:sign-estimates}:
\begin{align*}
 \Av_{\varepsilon}\Av_{\delta}
 |\A_{R_{j(x)}}(f,g)(x)|^2
 &\leq\frac{c_0^2}{2}
 +\Av_{\varepsilon}\Av_{\delta}
 \left[\one_{\Gamma_x}|\A_{R_{j(x)}}(f,g)(x)|^2\right]\\
 &\leq\frac{c_0^2}{2}
 +\left(\Av_{\varepsilon}\Av_{\delta}\one_{\Gamma_x}\right)^{1/2}
  \left(\Av_{\varepsilon}\Av_{\delta}
  |\A_{R_{j(x)}}(f,g)(x)|^4\right)^{1/2}\\
 &\leq\left(\frac12
       +3\left(\Av_{\varepsilon}\Av_{\delta}
                         \one_{\Gamma_x}\right)^{1/2}\right)
       \Av_{\varepsilon}\Av_{\delta}
       |\A_{R_{j(x)}}(f,g)(x)|^2.
\end{align*}
By \eqref{eq:rademacher-second-lower}, the last finite average
is positive. Dividing by it gives
$$
 \Av_{\varepsilon}\Av_{\delta}\one_{\Gamma_x}\geq\frac1{36}.
$$
This proves \eqref{eq:pointwise-sign-probability}.

The choices $e(x)$ and $j(x)$ were made before the signs.
Interchanging the finite sign sum and the integral in $x$, we get
\begin{align*}
 \Av_{\varepsilon}\Av_{\delta}
 \Big|\Big\{x\in U:|\A_{R_{j(x)}}(f,g)(x)|>{c_0\over\sqrt2}\Big\}\Big|
 &=\int_U\Av_{\varepsilon}\Av_{\delta}
 \one_{\{|\A_{R_{j(x)}}(f,g)(x)|>{c_0\over\sqrt2}\}}\,dx
 \geq\frac{|U|}{36}.
\end{align*}
At least one of the finitely many sign choices reaches this
average. This proves \eqref{eq:global-sign-level} and the lemma.
\end{proof}

\subsection{A positive-measure union and the norm comparison}

We first choose the labels and base points using the two estimates
in Proposition~\ref{prop:moments}.  We then apply
Lemma~\ref{lem:sign-selection} to that choice and compare the norms.

\begin{corollary}\label{cor:union}
Let $U$ be defined by \eqref{eq:union-definition}, with $\theta$ as
in \eqref{eq:theta-choice}. There are a constant $c_U>0$,
independent of $N$, and a choice of the cube labels and base points
such that
\begin{equation}\label{eq:union-lower}
 |U|\geq c_U.
\end{equation}
\end{corollary}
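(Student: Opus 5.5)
The corollary follows from Proposition~\ref{prop:moments} by a second-moment (Paley--Zygmund type) argument on the joint probability space of labels, base points and $x$, followed by choosing one good realization.

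First, for fixed labels and base points, $\mathcal N$ is a nonnegative integer-valued function supported on $U=\{\mathcal N>0\}$, and $\int\mathcal N^2<\infty$ because $\mathcal N$ is a finite sum of indicators of bounded sets. Cauchy--Schwarz in $x$ gives
$$
 \Bigl(\int_{\mathbb R^d}\mathcal N\,dx\Bigr)^2\leq |U|\int_{\mathbb R^d}\mathcal N^2\,dx .
$$
To pass this to the averaged quantities, I would not divide pointwise. Instead, I would apply Cauchy--Schwarz on the product measure $\Av_\omega\Av_{a,b}\,dx$ directly to $\mathcal N=\mathcal N\one_U$:
$$
 \Av_\omega\Av_{a,b}\int\mathcal N
 \leq\Bigl(\Av_\omega\Av_{a,b}|U|\Bigr)^{1/2}
 \Bigl(\Av_\omega\Av_{a,b}\int\mathcal N^2\Bigr)^{1/2}.
$$
Here $|U|$ is measurable in $(a,b)$ because $\one_U(x)$ is jointly Borel in $(x,a,b)$. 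For fixed $\omega$, each $S_e$ is defined by continuous inequalities in $x$ and the centers $u,v$, and the labels range over a finite set.

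Next, I insert the bounds of Proposition~\ref{prop:moments}. By \eqref{eq:first-moment} and Lemma~\ref{lem:local-limit},
$$
 \Av_\omega\Av_{a,b}\int\mathcal N\geq c\,\theta^2\sum_{j=1}^Nc_j\geq c\,\theta^2\sum_{j=1}^N\frac1j\geq c'\theta^2\log(N+1)=c'\rho,
$$
using \eqref{eq:theta-choice}. By \eqref{eq:second-moment}, the second moment is at most $C(\rho+\rho^2)$. Hence
$$
 \Av_\omega\Av_{a,b}|U|\geq\frac{c'^2\rho^2}{C(\rho+\rho^2)}=:c_U .
$$
Since $\rho$ was fixed in Subsection~\ref{sec:labels-coefficients} independently of $N$, the constant $c_U>0$ is independent of $N$.

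Finally, an average over finitely many labels and a probability measure on base points is at least $c_U$. Therefore, for some label choice $\omega$, the base-point average of $|U|$ is at least $c_U$. For that $\omega$, the set of base points with $|U|\geq c_U$ has positive measure, so one fixed choice of labels and base points satisfies \eqref{eq:union-lower}.

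The only delicate point is measurability of $(a,b)\mapsto|U|$, which justifies Fubini and the choice of a good point. Everything substantive is already contained in Proposition~\ref{prop:moments}, so I expect no real obstacle.
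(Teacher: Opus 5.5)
Your proposal is correct and follows the paper's proof: Cauchy--Schwarz on the joint space of labels, base points and $x$ applied to $\mathcal N=\one_{\{\mathcal N>0\}}\mathcal N$, then the two moment bounds of Proposition~\ref{prop:moments} with $\theta^2\sum_j c_j\approx\rho$, then selection of one realization. The only cosmetic difference is in the last step. The paper calls the averaged lower bound $2c_U$ and argues by contradiction, whereas you keep the bound $c_U$ and pass through a positive-measure set of base points; both are valid.
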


\begin{proof}
Lemma~\ref{lem:local-limit} gives
$$
 \sum_{j=1}^{N}c_j\approx\log(N+1).
$$
Thus \eqref{eq:theta-choice} gives $\theta^2\sum_{j=1}^{N}c_j\approx{\rho}$.
All the integrals below are finite, since the sums are finite and
the regions lie in a fixed bounded neighborhood of $[0,1]^d$.
Apply Cauchy--Schwarz to the normalized product sums and integrals
defining $\Av_\omega\Av_{a,b}$, together with Lebesgue integration in $x$:
\begin{align*}
 \bigl(\Av_\omega\Av_{a,b}\int\mathcal N\bigr)^2
 &=\bigl(\Av_\omega\Av_{a,b}\int\one_{\{\mathcal N>0\}}\mathcal N\bigr)^2
 \leq \Av_\omega\Av_{a,b}|\{x:\mathcal N(x)>0\}|\;
        \Av_\omega\Av_{a,b}\int\mathcal N^2.
\end{align*}
Proposition~\ref{prop:moments} therefore yields
$$
 \Av_\omega\Av_{a,b}|\{\mathcal N>0\}|
 \geq c\frac{\rho^2}{\rho+{\rho}^2}
 =:2c_U.
$$
The constant $c_U>0$ does not depend on $N$, since
${\rho}$ was fixed earlier.
If every choice of labels and base points had
$|\{\mathcal N>0\}|<c_U$, its normalized average would be at most
$c_U<2c_U$, a contradiction.  Hence a choice satisfying
\eqref{eq:union-lower} exists.  Fix it for the rest of the proof.
\end{proof}

Apply Lemma~\ref{lem:sign-selection} to the labels and base points
just fixed, and choose the signs given by \eqref{eq:global-sign-level}.
Recall that $E_N=\{R_1,\ldots,R_N\}$. Since $j(x)\in\{1,\ldots,N\}$,
\eqref{eq:global-sign-level} and Corollary~\ref{cor:union} give
\begin{equation}\label{eq:maximal-level}
 \Big|\Big\{x\in \mathbb R^d:\B_{E_N}(f,g)(x)>{c_0\over\sqrt2}\Big\}\Big|
 \geq \frac{c_U}{36}.
\end{equation}

The Hardy bounds \eqref{eq:H1-bound} hold for this choice, and
\eqref{eq:maximal-level} ensures that neither function is zero.
Combining \eqref{eq:weak-quasinorm}, \eqref{eq:maximal-level},
\eqref{eq:H1-bound}, and \eqref{eq:theta-choice}, we obtain
\begin{align*}
 \frac{\|\B_{E_N}(f,g)\|_{L^{1/2,\infty}(\mathbb R^d)}}
 {\|f\|_{H^1(\mathbb R^d)}\|g\|_{H^1(\mathbb R^d)}}
 &\geq \frac{c_0(c_U/36)^2}{C^2\sqrt2\theta^2}
 =\frac{c_0c_U^2}{36^2\sqrt2C^2\rho}\log(N+1).
\end{align*}
This proves Theorem~\ref{thm:main} for $d\geq2$, with
$c_d=c_0c_U^2/(36^2\sqrt2C^2\rho)$.

\section{The case \texorpdfstring{$d=1$}{d=1}}
\label{sec:dimension-one}

We give the changes needed to prove Theorem~\ref{thm:main} for $d=1$.
The averaging set is still the circle $\mathbb S^1\subset\mathbb R^2$.
Only the regions in the $x$ variable change.
Use the cube family in Subsection~\ref{sec:cube-tree} and the atom
construction in Section~\ref{sec:construction} with $d=1$.
Every cube $Q$ is now an interval, and $e_d=+1$.
There are no horizontal variables $x'$ or $z_e'$.
The label sums, atom numbers, coefficients, and centers retain
their definitions. In particular, \eqref{eq:coefficient-product}
becomes $\mu_{Q_1}\nu_{Q_2}=sR_j$ for a selected pair.

\smallskip
\noindent\emph{Regions and local lower bounds.}
For $e=((Q_1,r),(Q_2,t))\in\mathcal E_j$, use
$u=u_{Q_1,r}$, $v=v_{Q_2,t}$, and $z_e=(u+v)/2$ as before.
The cap in \eqref{eq:atom-pair-region} is
$\{+1\}\subset\mathbb S^0$, with $\mathcal H^0(\{+1\})=1$.
Thus
$$
 S_e=\left[
 z_e+\sqrt{r_{u,v}^2-(h_0/2)sR_j},\
 z_e+\sqrt{r_{u,v}^2+(h_0/2)sR_j}
 \right],
$$
where $r_{u,v}^2=R_j^2/2-(u-v)^2/4$.
Since $r_{u,v}\approx R_j$, rationalizing the difference of the
two endpoints gives $|S_e|\approx s$.
The coarea calculation \eqref{eq:exact-coarea} remains valid for
the two variables $y_1,y_2$, and Lemma~\ref{lem:J-positive} with $k=2$
holds for every $d\geq1$. As in Proposition~\ref{cor:positive-region},
since $\mu_{Q_1}\nu_{Q_2}=sR_j$, we obtain
$$
 \A_{R_j}(F_{Q_1,r},G_{Q_2,t})(x)
 \geq c_0,
 \qquad x\in S_e.
$$

\smallskip
\noindent\emph{Disjointness.}
The upper graph is the point
$\Gamma_{u,v}=z_e+r_{u,v}$, and $H=r_{u,v}\geq R_j/2$.
Here $e_1=e_d=1$, so the centers of the auxiliary intervals no
longer have the same last coordinate, as they did for $d\geq2$.
Instead, we use $|u-v|\leq{2^{-3}} R_j$ in the derivative calculation:
\begin{align*}
 \partial_u\Gamma_{u,v}&=\frac12+\frac{v-u}{4r_{u,v}},
 \qquad
 \partial_v\Gamma_{u,v}=\frac12+\frac{u-v}{4r_{u,v}},\qquad
 \frac{|u-v|}{4r_{u,v}}\leq2^{-4}.
\end{align*}
Both derivatives therefore lie between $1/4$ and $3/4$.
The thickness estimate and height comparisons in
Lemma~\ref{lem:separation} now separate these intervals
with the same spacings $2^6n_{Q_1}s$ and $2^4s$.

\smallskip
\noindent\emph{Endpoint integrals and conclusion.}
The containing region \eqref{eq:containing-region} is the interval
$\mathcal O_{j,Q}=[c_Q+R_j/3,c_Q+R_j]$.
For $x\in\mathcal O_{j,Q}$ and $v\in Q^R$,
$x-v\geq{\frac{13}{3}2^{-4}}R_j>R_j/4$.
Hence $|\partial_v\Phi_{R_j}(x;u,v)|=2|x-v|\geq R_j/2$,
which gives
$$
 \int_{Q^R}\one_{\mathcal G_j(x)}(u,v)\,dv
 \leq Cs.
$$
Integrating also over $u\in Q^L$ gives $CsR_j$.
These are precisely \eqref{eq:one-variable-coarea} and
\eqref{eq:one-scale} with $d=1$.
The label counting and the three cases in the proof of
Proposition~\ref{prop:moments} therefore give
$$
 \Av_\omega\Av_{a,b}\int_{\mathbb R}\mathcal N\approx{\rho},
 \qquad
 \Av_\omega\Av_{a,b}\int_{\mathbb R}\mathcal N^2\leq C({\rho}+{\rho}^2).
$$
Corollary~\ref{cor:union} and Lemma~\ref{lem:sign-selection}
then give one choice of labels, base points, and signs with
$|\{x\in \mathbb R:\B_{E_N}(f,g)(x)>\lambda_0\}|\geq c_U/36$,
where $E_N=\{R_1,\ldots,R_N\}$.
Together with \eqref{eq:H1-bound} and \eqref{eq:theta-choice},
this proves Theorem~\ref{thm:main} for $d=1$.

\section{The multilinear extension}
\label{sec:multilinear}

In this section we prove Theorem~\ref{rem:multilinear-endpoint}.
Fix $k\geq3$ and $d\geq1$. All constants in this section may depend
on $d$, $k$, and the fixed function $\phi$, but not on $N$.
We use the cube system and notation of Subsection~\ref{sec:cube-tree},
with the parameters redefined below. The weights
$L_r=\lfloor\sqrt r\rfloor$ and the label selection rule remain
unchanged. Thus Lemma~\ref{lem:local-limit} still gives
$c_j\approx j^{-1}$.
We construct $k$ functions. The first two are sums of atoms as in
the bilinear case. Each of the other $k-2$ functions has one atom in
every level-$0$ cube. 
We give the proof for $d\geq2$. The changes for $d=1$ are given in
Subsection~\ref{sec:multi-dimension-one}.

\subsection{Atoms and selected tuples}
\label{sec:multi-construction}

Let $h_0$, $\epsilon_0$ and $c_\ast$ be the constants of
Lemma~\ref{lem:J-positive} for this $k$.
Choose a dyadic integer
$\mathcal L\geq\max\{2^{12}k^2\sqrt d,N^{1/d}\}$.
For each $P\in\mathcal Q_0$, put
$$
\begin{gathered}
 \theta^k=\frac{\rho}{\log(N+1)},\quad
 m_P=\mathcal L^{(kd-1)(W-Z_P)},\quad
 n_P=\mathcal L^{(kd-1)(W+Z_P)},\quad
 s=\theta^kR_0\mathcal L^{-2(kd-1)W}.
\end{gathered}
$$
With this $\mathcal L$, put $R_j=\mathcal L^{j-N}$ and
$E_N=\{R_1,\ldots,R_N\}$, as in Subsection~\ref{sec:cube-tree}.
Here $\rho>0$ is fixed so that
$$
 \frac{\rho}{\log2}\leq\min\Bigl\{\frac1{2^{12}k},\ \epsilon_0\Bigr\}.
$$
Since $s\leq\theta^kR_0\leq\theta^kR_j$ and
$\theta^k\leq\rho/\log2$, this gives
\begin{equation}\label{eq:multi-small-scale}
 \frac{s}{R_j}\leq\theta^k\leq\min\Bigl\{\frac1{2^{12}k},\ \epsilon_0\Bigr\}
 \qquad(1\leq j\leq N).
\end{equation}
For $P\in\mathcal Q_0$, use the coefficients
$\mu_P=\theta R_0^d/m_P$, $\nu_P=\theta R_0^d/n_P$ and define
$F_{P,r},G_{P,t}$ by \eqref{eq:atom-blocks}, with centers
$$
 u_{P,r}=a_P+2^8k n_Prs\,e_d,\qquad
 v_{P,t}=b_P+2^4k ts\,e_d.
$$
For $3\leq i\leq k$, choose base points
$w_P^{(i)}\in\frac18P$ independently of each other and of $a,b$,
and put
$$
 H_P^{(i)}(x)=\theta R_0^d s^{-d}
                 \phi\bigl((x-w_P^{(i)})/s\bigr).
$$
Since $m_Pn_Ps=\theta^kR_0$, the largest displacement of a center
is at most $2^8k\theta^kR_0\leq R_0/16$, by \eqref{eq:multi-small-scale}.
The same coordinate estimate as in Subsection~\ref{sec:atoms-functions}
therefore shows that all centers and support cubes lie in $P$.
Each family has total coefficient
$\theta$. Let $f,g$ be the signed sums of the first two families,
and let
$$
 h_i=\sum_{P\in\mathcal Q_0}\varepsilon_P^{(i)}H_P^{(i)},
 \qquad \varepsilon_P^{(i)}\in\{-1,1\},\quad 3\leq i\leq k.
$$
The atomic norm bound gives
\begin{equation}\label{eq:multi-Hardy}
 \|f\|_{H^1(\mathbb R^d)},\ \|g\|_{H^1(\mathbb R^d)},\
 \|h_3\|_{H^1(\mathbb R^d)},\ldots,\|h_k\|_{H^1(\mathbb R^d)}\leq C\theta
\end{equation}
for every choice of labels, base points, and signs.
We write $\Av_{a,b,w}$ for normalized integration in all base points.

Put $\eta=(2^8k)^{-1}$. For $Q\in\mathcal Q_j$, choose cubes
$Q^1,\ldots,Q^k$ of side $\eta R_j/(4k\sqrt d)$, centered at
$$
 c_Q+\left(i-\frac{k+1}{2}\right)\frac{\eta R_j}{k}e_1,
 \qquad 1\leq i\leq k.
$$
The side length is at least $4R_0$, so each auxiliary cube contains
$\approx\mathcal L^{dj}$ level-$0$ cubes. The gaps in the first
coordinate exceed $R_j/\mathcal L$. Hence no level-$(j-1)$ cube
meets two auxiliary cubes. These assertions follow from the chosen
lower bound on $\mathcal L$, exactly as in
Lemma~\ref{lem:first-common-ancestor} and the count following it.
Select $(Q_1,\ldots,Q_k)$ if
$$
 Q_i\subset Q^i\quad(1\leq i\leq k),\qquad Z_{Q_1}-Z_{Q_2}=j.
$$
There is no condition on the labels at the remaining positions.
The first two cubes have first common ancestor $Q$, so their
selection condition has label average $c_j$.
Put $r_j=\mathcal L^{(kd-1)(2W-j)}$. On that condition,
\begin{equation}\label{eq:multi-count-identities}
 \begin{gathered}
 m_{Q_1}n_{Q_2}=r_j,\qquad
 \mathcal L^{kdj}r_j\frac{s}{R_j}=\theta^k,\qquad
 \mu_{Q_1}\nu_{Q_2}(\theta R_0^d)^{k-2}=sR_j^{kd-1}.
 \end{gathered}
\end{equation}
Let $\mathcal P_{j,Q}$ be the set of selected tuples
$(Q_1,\ldots,Q_k)$, and let
$$
 \mathcal E_{j,Q}
 =\bigl\{((Q_1,r),(Q_2,t),Q_3,\ldots,Q_k):
 (Q_1,\ldots,Q_k)\in\mathcal P_{j,Q},\
 0\leq r<m_{Q_1},\ 0\leq t<n_{Q_2}\bigr\}.
$$
As in Section~\ref{sec:atom-pairs}, we write $e$ for the elements of
$\mathcal E_{j,Q}$.

\subsection{Regions and local lower bounds}

For $e\in\mathcal E_{j,Q}$, write
$u_1=u_{Q_1,r}$, $u_2=v_{Q_2,t}$ and $u_i=w_{Q_i}^{(i)}$ for
$i\geq3$, and put
$$
 z_e=\frac1k\sum_{i=1}^k u_i,\qquad
 r_e^2=\frac{R_j^2}{k}-\frac1k\sum_{i=1}^k|u_i-z_e|^2.
$$
Define
$$
 S_e=\left\{z_e+\lambda\zeta:
  \lambda>0,\quad\zeta\in\mathbb S^{d-1},\quad|\zeta-e_d|\leq\eta,\quad
  |\lambda^2-r_e^2|\leq\frac{h_0}{k}sR_j\right\}.
$$
The auxiliary cubes give
$|u_i-c_Q|\leq\eta R_j$ and $|u_i-z_e|\leq2\eta R_j$.
By \eqref{eq:multi-small-scale}, $s/R_j\leq1/(64k)$, and the defining
radial interval satisfies
$$
 \frac{3R_j}{4\sqrt k}\leq\lambda\leq\frac{5R_j}{4\sqrt k}.
$$
Indeed, $R_j^2/k-4\eta^2R_j^2\leq r_e^2\leq R_j^2/k$,
and the perturbation of its square is at most $sR_j/k$.
Polar integration as in \eqref{eq:region-size-and-lower-bound}
therefore gives $|S_e|\approx sR_j^{d-1}$. For $x\in S_e$,
$y_i=(x-u_i)/R_j$ satisfies
\begin{equation}\label{eq:multi-normals}
 \frac1{2\sqrt k}\leq|y_i|\leq\frac2{\sqrt k}.
\end{equation}

Set $\Phi_R^{(k)}(x;u_1,\ldots,u_k)=\sum_i|x-u_i|^2-R^2$.
At $R=R_j$ this is $k(|x-z_e|^2-r_e^2)$, so
\begin{equation}\label{eq:multi-strip}
 |\Phi_{R_j}^{(k)}(x;u_1,\ldots,u_k)|\leq h_0sR_j
 \qquad(x\in S_e).
\end{equation}

\begin{proposition}
There is a constant $c_0>0$, independent of $N$, $j$, $e$, the labels,
and the base points, such that for every $e\in\mathcal E_{j,Q}$,
\begin{equation}\label{eq:multi-local-lower}
 \A_{R_j}^{(k)}(F_{Q_1,r},G_{Q_2,t},H_{Q_3}^{(3)},\ldots,H_{Q_k}^{(k)})(x)
 \geq c_0\qquad(x\in S_e).
\end{equation}
\end{proposition}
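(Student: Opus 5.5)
The proof follows that of Proposition~\ref{cor:positive-region}, with $k$ factors in place of two. Fix $e\in\mathcal E_{j,Q}$ and $x\in S_e$, and write $R=R_j$. Put
$$
 y_i=\frac{x-u_i}{R},\qquad h=\frac{\Phi_R^{(k)}(x;u_1,\ldots,u_k)}{2sR},\qquad \epsilon=\frac sR .
$$
We first place these parameters in the range of Lemma~\ref{lem:J-positive} for this $k$. By \eqref{eq:multi-normals}, $\frac1{2\sqrt k}\le|y_i|\le\frac2{\sqrt k}\le2$, and since $\frac1{2\sqrt k}\ge\frac1{4\sqrt k}$ we get $y\in K_k$. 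By \eqref{eq:multi-strip}, $|h|\le h_0/2$. By \eqref{eq:multi-small-scale}, $0<\epsilon\le\epsilon_0$.

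Next we compute the average exactly. Change variables $p_i=x-Ry_i$ on $\mathbb S^{kd-1}$, so that the average becomes an integral over the level set $\{\sum_i|x-p_i|^2=R^2\}$ with the factor $(\mathcal H^{kd-1}(\mathbb S^{kd-1})R^{kd-1})^{-1}$. On this level set $|\nabla_p\Phi^{(k)}_R|=2R$, so the coarea formula turns the average into
$$
 \frac{2}{\mathcal H^{kd-1}(\mathbb S^{kd-1})R^{kd-2}}\int\prod_i f_i(p_i)\,\delta\bigl(\Phi_R^{(k)}(x;p)\bigr)\,dp .
$$
Substitute $p_i=u_i+s\alpha_i$. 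The factors $s^{-kd}$ carried by the $k$ blocks cancel against $s^{kd}$ from $dp$. Expanding the square gives
$$
 \Phi_R^{(k)}(x;u+s\alpha)=2sR\Bigl(h-\sum_iy_i\cdot\alpha_i+\frac\epsilon2\sum_i|\alpha_i|^2\Bigr),
$$
and pulling $(2sR)^{-1}$ out of the delta yields
$$
 \A^{(k)}_R(\ldots)(x)=\frac{\mu_{Q_1}\nu_{Q_2}(\theta R_0^d)^{k-2}}{\mathcal H^{kd-1}(\mathbb S^{kd-1})\,sR^{kd-1}}\,J^{(k)}_\epsilon(y,h).
$$

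By the third identity in \eqref{eq:multi-count-identities}, the coefficient product equals $sR^{kd-1}$, so the prefactor reduces to $1/\mathcal H^{kd-1}(\mathbb S^{kd-1})$. Lemma~\ref{lem:J-positive} then gives the bound with
$$
 c_0=\frac{c_\ast}{\mathcal H^{kd-1}(\mathbb S^{kd-1})}.
$$

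The only step needing care is verifying \eqref{eq:multi-normals} from the radial bounds $\frac{3R}{4\sqrt k}\le\lambda\le\frac{5R}{4\sqrt k}$ together with $|u_i-z_e|\le2\eta R$. The triangle inequality applied to $x-u_i=\lambda\zeta+(z_e-u_i)$ gives
$$
 |y_i|\in\Bigl[\frac{3}{4\sqrt k}-2\eta,\ \frac{5}{4\sqrt k}+2\eta\Bigr].
$$
With $\eta=(2^8k)^{-1}$ we have $2\eta\le\frac1{4\sqrt k}$, which lands this interval inside the stated range. The rest is bookkeeping parallel to the bilinear case.
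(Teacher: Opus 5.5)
Your proposal is correct and follows the paper's proof. You apply the coarea reduction of \eqref{eq:exact-coarea} in $kd$ variables to $J^{(k)}_\epsilon(y,h)$, place $y\in K_k$, $|h|\le h_0/2$, $0<\epsilon\le\epsilon_0$ via \eqref{eq:multi-normals}, \eqref{eq:multi-strip} and \eqref{eq:multi-small-scale}, and cancel the prefactor with the last identity in \eqref{eq:multi-count-identities}, which gives $c_0=c_\ast/\mathcal H^{kd-1}(\mathbb S^{kd-1})$; your triangle-inequality check of \eqref{eq:multi-normals} using $2\eta\le 1/(4\sqrt k)$ spells out what the paper asserts just before the proposition.
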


\begin{proof}
The change of variables in \eqref{eq:exact-coarea}, now in $kd$
variables, gives
$$
 \A_{R_j}^{(k)}(F_{Q_1,r},G_{Q_2,t},H_{Q_3}^{(3)},\ldots,H_{Q_k}^{(k)})(x)
 =\frac{\mu_{Q_1}\nu_{Q_2}(\theta R_0^d)^{k-2}}
 {\mathcal H^{kd-1}(\mathbb S^{kd-1})sR_j^{kd-1}}
 J_\epsilon^{(k)}(y,h),
$$
where $y=(y_1,\ldots,y_k)$, $\epsilon=s/R_j$,
$h=\Phi_{R_j}^{(k)}(x;u_1,\ldots,u_k)/(2sR_j)$, and $J^{(k)}_\epsilon$
is defined in \eqref{eq:coarea-integral-definition}.
By \eqref{eq:multi-normals}, $y\in K_k$. By \eqref{eq:multi-strip}
and \eqref{eq:multi-small-scale}, $|h|\leq h_0/2$ and
$0<\epsilon\leq\epsilon_0$. Lemma~\ref{lem:J-positive} therefore gives
$J^{(k)}_\epsilon(y,h)\geq c_\ast$. The last identity in
\eqref{eq:multi-count-identities} proves
\eqref{eq:multi-local-lower} with
$c_0=c_\ast/\mathcal H^{kd-1}(\mathbb S^{kd-1})$.
\end{proof}

\subsection{Containing regions, separation, and endpoint integrals}

In this section, we replace the containing regions \eqref{eq:containing-region} by
$$
 \mathcal O_{j,Q}
 =\left\{x:|x'-c_Q'|\leq4\eta R_j,\quad
 \frac{R_j}{2\sqrt k}\leq x_d-c_{Q,d}\leq\frac{2R_j}{\sqrt k}\right\}.
$$
The bounds on $z_e,\lambda$ and $\zeta$ above imply $S_e\subset\mathcal O_{j,Q}$.
Their horizontal widths in each coordinate are at most $8\eta R_j<R_j$,
and their vertical length is $3R_j/(2\sqrt k)<R_j$ for $k\geq3$.
Since cube centers lie on the $R_j$ lattice, the regions are disjoint
at each level. Also $|\mathcal O_{j,Q}|\approx R_j^d$ and
$\sum_Q|\mathcal O_{j,Q}|\lesssim1$.

For $x\in\mathbb R^d$, put
$
 \mathcal G^{(k)}_j(x)
 =\{(u_1,\ldots,u_k)\in(\mathbb R^d)^k:
 |\Phi_{R_j}^{(k)}(x;u_1,\ldots,u_k)|\leq h_0sR_j\},
$
the analogue of $\mathcal G_j(x)$.

\begin{lemma}\label{lem:multi-endpoint}
Let $Q\in\mathcal Q_j$, $x\in\mathcal O_{j,Q}$, and let
$I\subset\{1,\ldots,k\}$ have $p\geq1$ elements. Fix $u_i\in Q^i$ for
$i\notin I$. Then
$$
 \int_{\prod_{i\in I}Q^i}
 \one_{\mathcal G^{(k)}_j(x)}(u_1,\ldots,u_k)
 \prod_{i\in I}du_i
 \leq CsR_j^{pd-1}.
$$
\end{lemma}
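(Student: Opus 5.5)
The plan is to copy the proof of \eqref{eq:one-variable-coarea}: integrate out one vertical coordinate using monotonicity of $\Phi^{(k)}_{R_j}$, then bound the remaining variables by the volume of their cubes. Choose one index $i_0\in I$ and fix every $u_i$ with $i\in I\setminus\{i_0\}$, together with the coordinates $u_{i_0}'$. The only variable left is $(u_{i_0})_d$, and
$$
 \partial_{(u_{i_0})_d}\Phi^{(k)}_{R_j}(x;u_1,\ldots,u_k)=-2\bigl(x_d-(u_{i_0})_d\bigr).
$$
I need a lower bound for $x_d-(u_{i_0})_d$. Since $x\in\mathcal O_{j,Q}$, we have $x_d-c_{Q,d}\geq R_j/(2\sqrt k)$. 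Since $u_{i_0}\in Q^{i_0}$, we have $|(u_{i_0})_d-c_{Q,d}|\leq\eta R_j$; this holds because the centers of the auxiliary cubes differ from $c_Q$ only in $e_1$, and their sides are tiny. With $\eta=(2^8k)^{-1}\leq R_j^{-1}\cdot R_j/(4\sqrt k)$, these give
$$
 x_d-(u_{i_0})_d\geq R_j/(4\sqrt k).
$$
So the derivative has absolute value at least $R_j/(2\sqrt k)$, and the map is strictly monotone in this variable.

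The set of $(u_{i_0})_d$ with $|\Phi^{(k)}_{R_j}|\leq h_0sR_j$ is therefore an interval of length at most
$$
 \frac{2h_0sR_j}{R_j/(2\sqrt k)}=4\sqrt k\,h_0s\leq Cs.
$$
Next I integrate over $u_{i_0}'$, which ranges over a cube of side about $\eta R_j/(4k\sqrt d)$ in $\mathbb R^{d-1}$. This gives the factor $CR_j^{d-1}$. Finally I integrate over the other $p-1$ variables $u_i\in Q^i$, $i\in I\setminus\{i_0\}$, and bound each by $|Q^i|\leq CR_j^d$. Fubini then yields the bound
$$
 Cs\,R_j^{d-1}\,R_j^{(p-1)d}=CsR_j^{pd-1},
$$
which is the claim. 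The constants depend only on $d$, $k$ and $h_0$.

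There is no real obstacle. The only step needing care is the uniform lower bound on $x_d-(u_{i_0})_d$, that is, checking that the vertical extent of $Q^{i_0}$ is $O(\eta R_j)$ while $\mathcal O_{j,Q}$ sits at height at least $R_j/(2\sqrt k)$ above $c_Q$. For $d=1$ there are no primed variables. The same computation works with $x-u_{i_0}\geq R_j/(4\sqrt k)$, which follows because the $Q^i$ lie within $\eta R_j$ of $c_Q$.
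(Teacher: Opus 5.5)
Your proposal is correct and follows the paper's proof essentially step for step. The bound $x_d-u_{i,d}\geq R_j/(2\sqrt k)-\eta R_j\geq R_j/(4\sqrt k)$ makes $\Phi^{(k)}_{R_j}$ strictly monotone in one vertical coordinate, which gives a slice of length $O(s)$, and the remaining coordinates and endpoints are then bounded by volume. Your separate check for $d=1$, where the auxiliary centers are displaced in the vertical direction itself but still lie within $\eta R_j$ of $c_Q$, is also correct.
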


\begin{proof}
For any endpoint $u_i\in Q^i$ and $x\in\mathcal O_{j,Q}$,
$$
 x_d-u_{i,d}\geq\frac{R_j}{2\sqrt k}-\eta R_j
 \geq\frac{R_j}{4\sqrt k}.
$$
Thus $\partial_{u_{i,d}}\Phi_{R_j}^{(k)}=-2(x_d-u_{i,d})$ has
constant sign and absolute value at least $R_j/(2\sqrt k)$.
Fix one $i\in I$. Integrating first in $u_{i,d}$, and then in its
other coordinates, gives, uniformly in all other endpoints,
\begin{equation}\label{eq:multi-one-variable}
 \int_{Q^i}\one_{\mathcal G^{(k)}_j(x)}(u)\,du_i
 \leq C_{d,k}sR_j^{d-1}.
\end{equation}
Integrating in the remaining $p-1$ endpoints of $I$ contributes at
most $CR_j^{(p-1)d}$. This proves the lemma.
\end{proof}

\begin{lemma}\label{lem:multi-separation}
Let $e,e'\in\mathcal E_{j,Q}$ have the same cube tuple and index pairs
$(r,t)\neq(r',t')$. Then $S_e\cap S_{e'}=\varnothing$.
\end{lemma}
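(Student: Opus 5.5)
I follow the proof of Lemma~\ref{lem:separation}, comparing upper graphs over a common horizontal domain. Fix the cube tuple $(Q_1,\ldots,Q_k)$, so the endpoints $u_i=w^{(i)}_{Q_i}$ for $i\geq3$ are the same for $e$ and $e'$. Only $u_1=u_{Q_1,r}$ and $u_2=v_{Q_2,t}$ move. I write $\Gamma_e(x')=z_{e,d}+H_e(x')$ with $H_e=(r_e^2-|x'-z_e'|^2)^{1/2}$, the upper half of the sphere $\Phi^{(k)}_{R_j}(x;u_1,\ldots,u_k)=0$. On $D_Q=\{|x'-c_Q'|\leq4\eta R_j\}$, which contains the horizontal projections of all $S_e$ by the containment $S_e\subset\mathcal O_{j,Q}$, the bounds $|z_e'-c_Q'|\leq\eta R_j$ and $r_e^2\geq R_j^2/k-4\eta^2R_j^2$ give $H_e\geq R_j/(2\sqrt k)$. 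These bounds persist along segments in the convex cubes $Q^1,Q^2$.

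\emph{Step 1: thickness.} For $x\in S_e$ we have $x_d-z_{e,d}=\lambda\zeta_d>0$, because $|\zeta-e_d|\leq\eta$. Factor
\[
\Phi^{(k)}_{R_j}=k\bigl(x_d-\Gamma_e(x')\bigr)\bigl(x_d-z_{e,d}+H_e\bigr).
\]
The second factor is at least $R_j/(2\sqrt k)$, so \eqref{eq:multi-strip} gives $|x_d-\Gamma_e(x')|\leq 2h_0 s/\sqrt k\leq s$. Hence each vertical section of $S_e$ lies within $s$ of its graph.

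\emph{Step 2: derivatives in $u_{1,d},u_{2,d}$.} Since $z_e=\frac1k\sum u_i$ and $kr_e^2=R_j^2-\sum|u_i-z_e|^2$, we have
\[
\partial_{u_{1,d}}\Gamma_e=\frac1k+\frac{z_{e,d}-u_{1,d}}{kH_e},
\]
where the terms from $z_e'$ in $|x'-z_e'|^2$ also contribute at most $|x'-z_e'|/(kH_e)$. The auxiliary cubes give $|u_{i,d}-c_{Q,d}|\leq\eta R_j/(8k\sqrt d)$, because all the centers share the $d$-th coordinate. With $|x'-z_e'|\leq5\eta R_j$ and $H_e\geq R_j/(2\sqrt k)$, the correction is at most $C\eta\sqrt k/k\leq1/(2k)$ for $\eta=(2^8k)^{-1}$. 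So both partial derivatives lie in $[\frac1{2k},\frac3{2k}]$. This is the step I expect to be the main obstacle: I have to check the horizontal-center terms carefully, because $x'-z_e'$ is no longer small relative to $u_d$ differences. If the bound above is too crude, I will shrink the cap $\eta$ or use the explicit $2^8k$, $2^4k$ spacings to absorb the constants.

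\emph{Step 3: height comparison.} If $r=r'$ and $t\neq t'$, the mean value theorem along the segment in $Q^2$ gives a graph gap of at least $\frac1{2k}\,2^4k|t-t'|s\geq8s>2s$. If $r>r'$, I use $n_{Q_2}\leq n_{Q_1}$, which follows from $Z_{Q_1}-Z_{Q_2}=j$, and the triangle inequality:
\[
\text{gap}\geq\tfrac1{2k}\,2^8kn_{Q_1}(r-r')s-\tfrac3{2k}\,2^4k|t-t'|s\geq(2^7-24)n_{Q_1}s>2s.
\]
Combining this with Step 1, the vertical sections of $S_e$ and $S_{e'}$ are disjoint at each $x'\in D_Q$, so $S_e\cap S_{e'}=\varnothing$. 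For $d=1$ the argument is the same with $e_1=e_d$, using $|u_i-u_\ell|\leq2\eta R_j$ in the derivative bound, as in Section~\ref{sec:dimension-one}.
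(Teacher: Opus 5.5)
Your proposal is correct and follows the paper's proof essentially line by line: thickness at most $s$ via the factorization $\Phi^{(k)}_{R_j}=k(x_d-\Gamma_e)(x_d-z_{e,d}+H_e)$ with $H_e\geq R_j/(2\sqrt k)$ on the common horizontal domain, derivative bounds of order $1/k$ in $u_{1,d},u_{2,d}$ along segments in the convex cubes, and the two height comparisons using $n_{Q_2}\leq n_{Q_1}$. The step you flagged as the main obstacle is harmless, because $z_e'$ and the horizontal part of $\sum_i|u_i-z_e|^2$ do not depend on $u_{1,d}$ or $u_{2,d}$; there is therefore no horizontal correction term, and the derivative is exactly $\frac1k+\frac{z_{e,d}-u_{i,d}}{kH_e}$, as in the paper.
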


\begin{proof}
For a fixed cube tuple, the upper sphere is
$
 \Gamma(x')=z_{e,d}+H(x')$ with $H(x'):=(r_e^2-|x'-z_e'|^2)^{1/2}.
$
On $|x'-c_Q'|\leq4\eta R_j$, we have
$H^2\geq R_j^2/k-29\eta^2R_j^2\geq R_j^2/(4k)$.
Differentiating while holding the other endpoints fixed gives
$$
 \partial_{u_{i,d}}\Gamma
 =\frac1k+\frac{z_{e,d}-u_{i,d}}{kH},\qquad
 \frac1{2k}\leq\partial_{u_{i,d}}\Gamma\leq\frac2k.
$$
These bounds hold throughout the line segments joining the atom
centers. Moreover,
$$
 |x_d-\Gamma(x')|
 =\frac{|\Phi_{R_j}^{(k)}(x;u)|}{k(x_d-z_{e,d}+H)}
 \leq\frac{2h_0}{\sqrt k}s\leq s\qquad(x\in S_e).
$$
If $r=r'$ and $t\ne t'$, the height difference is at least $8s$.
If $r>r'$, then $n_{Q_2}\leq n_{Q_1}$ by selection, and the height
difference is at least
$$
 128n_{Q_1}(r-r')s-32|t-t'|s\geq96n_{Q_1}s>2s.
$$
At each $x'$, the vertical sections of $S_e$ and $S_{e'}$ therefore
lie in disjoint intervals.
\end{proof}

\subsection{The counting functions}

Define $\mathcal N_{j,Q},\mathcal N_j,\mathcal N$ by summing the
indicators of $S_e$, $e\in\mathcal E_{j,Q}$, as in
\eqref{eq:cube-counting-functions} and \eqref{eq:counting-functions}.

\begin{proposition}\label{prop:multi-moments}
There are constants $c,C>0$, independent of $N$, such that
$$
 \Av_\omega\Av_{a,b,w}\int\mathcal N\geq c\rho,\qquad
 \Av_\omega\Av_{a,b,w}\int\mathcal N^2\leq C(\rho+\rho^2).
$$
\end{proposition}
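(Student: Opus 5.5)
We follow the proof of Proposition~\ref{prop:moments} step by step, using Lemma~\ref{lem:multi-endpoint} in place of \eqref{eq:one-variable-coarea} and \eqref{eq:one-scale}, and \eqref{eq:multi-count-identities} in place of \eqref{eq:count-identity}.

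\emph{First moment.} Fix $Q\in\mathcal Q_j$. The admissible tuples with $Q_i\subset Q^i$ number about $\mathcal L^{kdj}$. Each selected tuple carries $r_j$ index pairs, and each region satisfies $\Av_{a,b,w}|S_e|\approx sR_j^{d-1}$. The selection condition involves only the labels of $K(Q_1,Q_2)$, so its average is $c_j$. Hence
$$
\Av_\omega\Av_{a,b,w}\int\mathcal N_{j,Q}\approx c_j\mathcal L^{kdj}r_jsR_j^{d-1}=c_j\theta^kR_j^d.
$$
Summing over the $R_j^{-d}$ cubes and then over $j$ gives $\theta^k\sum_jc_j\approx\rho$.

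\emph{Averaged pointwise bound.} As in \eqref{eq:translated-pair-bound}, $\one_{S_e}(x)\le\one_{\mathcal O_{j,Q}}(x)\one_{\mathcal G^{(k)}_j(x)}(u_1,\dots,u_k)$. Averaging over the $k$ base points costs $CR_0^{-kd}$, and after translation the integration domains enlarge to $Q_1\times\dots\times Q_k$. Summing the disjoint domains inside $\prod_iQ^i$ and applying Lemma~\ref{lem:multi-endpoint} with $p=k$ gives
$$
\Av_\omega\Av\mathcal N_{j,Q}(x)\le Cc_jr_jR_0^{-kd}sR_j^{kd-1}\one_{\mathcal O_{j,Q}}(x)=Cc_j\theta^k\one_{\mathcal O_{j,Q}}(x).
$$

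\emph{Second moment, different levels $j<k'$.} If some cube of the level-$k'$ tuple lies in the level-$j$ cube $Q$, the regions are disjoint. The argument of Lemma~\ref{lem:radial-separation} applies, since $|y_i|\approx R/\sqrt k$ by \eqref{eq:multi-normals} and $\mathcal L$ is large. Otherwise all $2k$ cubes are distinct. The base-point averages then factor, and the label sets $K$ are disjoint, so the label average factors as $c_jc_{k'}$. This gives $Cc_jc_{k'}\theta^{2k}$ after using the disjointness of the $\mathcal O$ regions at each level.

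\emph{Second moment, same level.} Different cubes contribute zero because their $\mathcal O$ regions are disjoint. For the same cube, split according to which components coincide.
\begin{itemize}
\item If the whole tuple coincides, Lemma~\ref{lem:multi-separation} leaves only the diagonal, which is bounded by the first moment.
\item If some components coincide but not all, let $p\ge1$ be the number of differing positions. Fix $e$ and integrate the partner's free base points using Lemma~\ref{lem:multi-endpoint}. Also use the fact that a shared $Q_1$ or $Q_2$ forces equal $Z$, hence equal index ranges.
\item If no component coincides, argue as in Part (iii), with an extra factor $\mathcal L^{-d}$ when the $K$ sets meet.
\end{itemize}

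The main obstacle is the partially coinciding tuples, above all when $Q_1$ and $Q_2$ both coincide but some $Q_i$ with $i\ge3$ differs. In that case the pair of atoms is the same, so there is no separation, and the extra freedom lies only in the single-atom functions $h_i$. Here I would sum over the free $\widetilde Q_i$, $i\in I$, and over the full index range $\widetilde r,\widetilde t$. Lemma~\ref{lem:multi-endpoint} with $|I|=p$ then gives the bound
$$
CR_0^{-pd}sR_j^{pd-1}\cdot r_j=C\theta^k\mathcal L^{(p-k)dj},
$$
relative to $|S_e|$. This is at most $C\theta^k$, so these terms contribute $C\theta^kc_j\theta^kR_j^d$, which is acceptable.

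When only $Q_1$ is shared, the partner ranges over $\widetilde t$ and the remaining free cubes, and Lemma~\ref{lem:multi-endpoint} again applies with $p\ge1$. The index factor $m_{Q_1}n_{Q_2}=r_j$ combines with $\mathcal L^{pdj}$ to give at most $\theta^k$.

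Summing all contributions gives $C(\theta^k\sum c_j+\theta^{2k}(\sum c_j)^2)\le C(\rho+\rho^2)$.
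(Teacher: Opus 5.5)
Your proposal is correct and follows the paper's proof essentially step by step. It uses the same first-moment count $c_j\mathcal L^{kdj}r_jsR_j^{d-1}=c_j\theta^kR_j^d$ and the same classification of same-cube pairs by shared positions, with the identity $r_jR_0^{-pd}sR_j^{pd-1}=\theta^k\mathcal L^{-(k-p)dj}$, together with the Part (iii) argument carrying the extra factor $\mathcal L^{-d}$ when the $K$ sets meet and radial separation plus label factorization across levels. The one detail you leave implicit is the case where only positions $i\ge3$ are shared: there the partner's index range depends on $\widetilde Q_1,\widetilde Q_2$, so you must enlarge the domains and sum over $(\widetilde r,\widetilde t)$ first, which gives $r_j$ per selected tuple, before summing over the free cubes, exactly as in the paper's case (b).
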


\begin{proof}
The number of cube tuples is $\approx\mathcal L^{kdj}$. On selection
there are $r_j$ index pairs and the label average is $c_j$.
Consequently
\begin{equation}\label{eq:multi-first-moment}
 \Av_\omega\Av_{a,b,w}\int\mathcal N_{j,Q}
 \approx c_j\mathcal L^{kdj}r_jsR_j^{d-1}
 =c_j\theta^kR_j^d.
\end{equation}
Summing over $Q\in\mathcal Q_j$ and $1\leq j\leq N$ gives
$\Av_\omega\Av_{a,b,w}\int\mathcal N\approx\theta^k\sum_jc_j\approx\rho$.

\smallskip
\emph{The same level and the same cube.}
Fix $j$ and $Q\in\mathcal Q_j$. We classify ordered pairs
$(e,\widetilde e)\in\mathcal E_{j,Q}^2$, with cube tuples
$(Q_1,\ldots,Q_k)$ and $(\widetilde Q_1,\ldots,\widetilde Q_k)$, by
the set
$$
 A=\{i:Q_i=\widetilde Q_i\}
$$
of shared positions. Different positions cannot share a cube, because
the auxiliary cubes are disjoint. For $e\in\mathcal E_{j,Q}$ and
$A\subset\{1,\ldots,k\}$, let $\mathcal E_{j,Q}(e,A)$ be the set of
$\widetilde e\in\mathcal E_{j,Q}$ whose set of shared positions with
$e$ is $A$. Let $q=\#A$ and $p=k-q$.

If $q=k$, Lemma~\ref{lem:multi-separation} leaves only $e=\widetilde e$.
This contribution equals
$\Av_\omega\Av_{a,b,w}\int\mathcal N_{j,Q}$.
By \eqref{eq:multi-first-moment}, it is bounded by
$Cc_j\theta^kR_j^d$.

Suppose $1\leq q<k$, and fix $A$. Fix the labels, the first pair $e$,
and all base points of $e$. Then $S_e$ is fixed. The free positions
$i\notin A$ carry the base points of the cubes $\widetilde Q_i\ne Q_i$,
which are distinct from those of $e$. Write $\Av_{\mathrm{free}}$ for
the normalized integration in these $p$ base points. It contributes
the factor $8^{pd}R_0^{-pd}$. Translate each such base
point to its atom center. The Jacobian is one and, by the displacement
bound in Subsection~\ref{sec:multi-construction}, the translated domain
lies in $\widetilde Q_i$. Since $S_{\widetilde e}$ lies in the strip
\eqref{eq:multi-strip}, for $x\in S_e\subset\mathcal O_{j,Q}$ we obtain
\begin{equation}\label{eq:multi-free-bound}
 \Av_{\mathrm{free}}\one_{S_{\widetilde e}}(x)
 \leq CR_0^{-pd}\int_{\prod_{i\notin A}\widetilde Q_i}
 \one_{\mathcal G^{(k)}_j(x)}(\widetilde u)
 \prod_{i\notin A}d\widetilde u_i,
\end{equation}
where $\widetilde u_i$, $i\in A$, are the centers of $\widetilde e$ at the
shared positions. These shared centers depend on
$(\widetilde r,\widetilde t)$ only, not on the free cubes.

We next sum over $\widetilde e\in\mathcal E_{j,Q}(e,A)$. Two cases occur.

\emph{(a) The first or the second position is shared.}
If $1\in A$, the two selection conditions
$Z_{Q_1}-Z_{Q_2}=j$ and $Z_{Q_1}-Z_{\widetilde Q_2}=j$ give
$Z_{\widetilde Q_2}=Z_{Q_2}$. Hence
$m_{\widetilde Q_1}=m_{Q_1}$ and $n_{\widetilde Q_2}=n_{Q_2}$ for every
permitted partner, and the index range of $(\widetilde r,\widetilde t)$
is the same fixed set of $r_j$ pairs. If $2\in A$, the same argument
gives $Z_{\widetilde Q_1}=Z_{Q_1}$, with the same conclusion. Fix
$(\widetilde r,\widetilde t)$ in this range. The right-hand side of
\eqref{eq:multi-free-bound} is then the integral of one fixed function
over $\prod_{i\notin A}\widetilde Q_i$. Summing over the free cubes,
which are disjoint, and dropping the remaining selection restrictions
bounds the sum by the same integral over $\prod_{i\notin A}Q^i$.
Lemma~\ref{lem:multi-endpoint} with $I=\{1,\ldots,k\}\setminus A$
bounds this integral by $CsR_j^{pd-1}$. Summing finally over the $r_j$
pairs $(\widetilde r,\widetilde t)$ gives
$$
 \sum_{\widetilde e\in\mathcal E_{j,Q}(e,A)}
 \Av_{\mathrm{free}}\one_{S_{\widetilde e}}(x)
 \leq Cr_jR_0^{-pd}sR_j^{pd-1}.
$$

\emph{(b) Neither the first nor the second position is shared.}
Then $A\subset\{3,\ldots,k\}$, so the shared centers are the single
atoms at the positions in $A$ and do not depend on
$(\widetilde r,\widetilde t)$. Both $\widetilde Q_1$ and $\widetilde Q_2$
are free. After enlargement, the right-hand side of
\eqref{eq:multi-free-bound} does not depend on
$(\widetilde r,\widetilde t)$, and on every selected tuple there are
exactly $m_{\widetilde Q_1}n_{\widetilde Q_2}=r_j$ such pairs. We
therefore sum first over $(\widetilde r,\widetilde t)$ and then over the
free cubes. As in (a), this gives the same bound
$Cr_jR_0^{-pd}sR_j^{pd-1}$.

In both cases, \eqref{eq:multi-count-identities} gives
$r_jR_0^{-pd}sR_j^{pd-1}=\theta^k\mathcal L^{-qdj}$. Integrating over
$x\in S_e$, we obtain
$$
 \sum_{\widetilde e\in\mathcal E_{j,Q}(e,A)}
 \Av_{\mathrm{free}}|S_e\cap S_{\widetilde e}|
 \leq C\theta^k\mathcal L^{-qdj}|S_e|.
$$
Average the base points of $e$, sum over $e$, and average labels.
By \eqref{eq:multi-first-moment}, the contribution is bounded by
$C\theta^k\mathcal L^{-qdj}c_j\theta^kR_j^d
\leq Cc_j\theta^kR_j^d$. There are at most $2^k$ choices of $A$,
a factor absorbed into $C$.

It remains to consider $q=0$, that is, tuples with no shared cubes.
For fixed labels and indices, the two tuples have disjoint base-point
coordinates, so the base-point average factors. Translating and
enlarging the domains for both tuples, as in
\eqref{eq:four-base-point-bound}, gives for $x\in\mathcal O_{j,Q}$ the
bound
$$
 CR_0^{-2kd}
 \Bigl(\int_{\prod_iQ_i}\one_{\mathcal G^{(k)}_j(x)}(u)\,du\Bigr)
 \Bigl(\int_{\prod_i\widetilde Q_i}\one_{\mathcal G^{(k)}_j(x)}(\widetilde u)\,d\widetilde u\Bigr),
$$
which is independent of the labels and indices. The index sum
contributes $r_j^2$.

If $K(Q_1,Q_2)\cap K(\widetilde Q_1,\widetilde Q_2)=\varnothing$, the
label average is $c_j^2$ by \eqref{eq:cross-level-label-count}.
Combining the disjoint domains and applying
Lemma~\ref{lem:multi-endpoint} with $I=\{1,\ldots,k\}$ to each factor
gives
$$
 Cc_j^2r_j^2R_0^{-2kd}s^2R_j^{2kd-2}|\mathcal O_{j,Q}|
 \leq Cc_j^2\theta^{2k}R_j^d.
$$

Otherwise, choose $P$ in the intersection. As in the argument
following \eqref{part iii 1}, $P=Q_i^{(h)}=\widetilde Q_\ell^{(h)}$ for
some $0\leq h\leq j-1$ and $i,\ell\in\{1,2\}$, since no level-$(j-1)$
cube meets two auxiliary cubes, $i=\ell$. Hence
$$
 Q_1^{(j-1)}=\widetilde Q_1^{(j-1)}
 \quad\text{or}\quad
 Q_2^{(j-1)}=\widetilde Q_2^{(j-1)}.
$$
In either case, the joint label average is at most $c_j$, as in the
bilinear case. Suppose $Q_1^{(j-1)}=\widetilde Q_1^{(j-1)}=Q'$. The
other case is the same with the first and second positions
interchanged. In each of the two integrals above, integrate first in
the third endpoint by \eqref{eq:multi-one-variable}. The third
positions are free because no cube is shared. This gives the factor
$(CsR_j^{d-1})^2$. 
For each tuple, integrate the endpoints at all positions
$i\in\{1,\ldots,k\}\setminus\{1,3\}$ by volume. Together, these
integrals contribute at most $CR_j^{(2k-4)d}$. The first endpoints
$u_1,\widetilde u_1$ remain.
Grouping the disjoint domains $Q_1\times\widetilde Q_1$ by their common
ancestor $Q'$ and enlarging their union, we obtain the factor
$$
 \sum_{Q'\in\mathcal Q_{j-1}:\,Q'\subset Q}|Q'|^2
 =\mathcal L^{-d}R_j^{2d}.
$$
Thus the two integrals and the cube sum together contribute at most
$C\mathcal L^{-d}s^2R_j^{2kd-2}$. Including normalization, index count, label average, and the measure
of $\mathcal O_{j,Q}$ bounds the exceptional contribution by
$$
 Cc_jr_j^2R_0^{-2kd}\mathcal L^{-d}s^2R_j^{2kd-2}R_j^d
 =\frac{Cc_j}{\mathcal L^d}\theta^{2k}R_j^d
 \leq Cc_j^2\theta^{2k}R_j^d.
$$
Here $\mathcal L^d\geq N\geq j$ and $c_j\gtrsim j^{-1}$ give the
last inequality. Combining all cases yields
\begin{equation}\label{eq:multi-same-level}
 \Av_\omega\Av_{a,b,w}\int\mathcal N_{j,Q}^2
 \leq C(c_j\theta^k+c_j^2\theta^{2k})R_j^d.
\end{equation}

\smallskip
\emph{The same level and different cubes.}
Different cubes at the same level have disjoint containing regions,
so they contribute no cross terms.

\smallskip
\emph{Different levels.}
Let $j<j'$, with the smaller region belonging to $Q\in\mathcal Q_j$
and the larger one to $\widetilde Q\in\mathcal Q_{j'}$.
If an endpoint cube of the larger tuple lies in $Q$, its center $w$
and the smaller tuple's center $u_1$ both lie in $Q$. An intersection
point would satisfy
$$
 \frac{R_{j'}}{2\sqrt k}\leq|x-w|
 \leq|x-u_1|+|u_1-w|
 \leq\left(\frac2{\sqrt k}+\sqrt d\right)R_j,
$$
contradicting $R_{j'}/R_j\geq\mathcal L$.
In all remaining terms the tuples have disjoint endpoint cubes.
Their selection conditions also have disjoint label-coordinate sets:
a common coordinate cube would be strictly inside $Q$ and would
contain an endpoint cube of the larger tuple, a case just excluded.
For fixed labels the base-point integrals factor. Enlarge their domains
before summing indices and labels, giving $r_jr_{j'}$ and $c_jc_{j'}$,
respectively. Lemma~\ref{lem:multi-endpoint} with $I=\{1,\ldots,k\}$ at
each radius gives
$$
 \begin{aligned}
 \Av_\omega\Av_{a,b,w}
       [\mathcal N_{j,Q}(x)\mathcal N_{j',\widetilde Q}(x)]
 &\leq Cc_jc_{j'}r_jr_{j'}R_0^{-2kd}s^2R_j^{kd-1}R_{j'}^{kd-1}
       \one_{\mathcal O_{j,Q}}(x)\one_{\mathcal O_{j',\widetilde Q}}(x)\\
 &=Cc_jc_{j'}\theta^{2k}
       \one_{\mathcal O_{j,Q}}(x)\one_{\mathcal O_{j',\widetilde Q}}(x).
 \end{aligned}
$$
Summing cubes and integrating, using disjointness at each level,
yields $$\Av_\omega\Av_{a,b,w}\int\mathcal N_j\mathcal N_{j'}
\leq Cc_jc_{j'}\theta^{2k}.$$

\smallskip
\emph{Conclusion.}
Together with \eqref{eq:multi-first-moment} and
\eqref{eq:multi-same-level}, these bounds give
$$
 \Av_\omega\Av_{a,b,w}\int\mathcal N^2
 \leq C\Bigl[\theta^k\sum_{j=1}^Nc_j
 +\theta^{2k}\Bigl(\sum_{j=1}^Nc_j\Bigr)^2\Bigr]
 \leq C(\rho+\rho^2),
$$
since $\theta^k\sum_{j=1}^Nc_j\approx\rho$.
\end{proof}

As in Corollary~\ref{cor:union}, Proposition~\ref{prop:multi-moments}
gives a fixed choice of all labels and base points for which
$|U|\geq c_U$, where $U=\{\mathcal N>0\}$ and $c_U>0$ is independent
of $N$.

\subsection{Signs and the norm comparison}

At each $x\in U$, choose a selected region before choosing the signs.
The full average at its radius is a real sum of the form
$$
 X=\sum_{\alpha_1,\ldots,\alpha_k}
       a_{\alpha_1,\ldots,\alpha_k}
       \varepsilon^{(1)}_{\alpha_1}\cdots\varepsilon^{(k)}_{\alpha_k},
$$
where $\varepsilon^{(1)},\ldots,\varepsilon^{(k)}$ are the independent
sign families of the $k$ functions.

\begin{lemma}
For every such sum, $\Av X^2=\sum_\alpha a_\alpha^2$ and
$$
 \Av X^4\leq3^k\Bigl(\sum_\alpha a_\alpha^2\Bigr)^2.
$$
\end{lemma}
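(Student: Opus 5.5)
The second-moment identity follows from orthogonality of products of independent signs. For multi-indices $\alpha,\beta$,
$$
 \Av\Bigl(\prod_{i=1}^k\varepsilon^{(i)}_{\alpha_i}\varepsilon^{(i)}_{\beta_i}\Bigr)
 =\prod_{i=1}^k\one_{\{\alpha_i=\beta_i\}},
$$
because the families are independent and each family consists of independent symmetric signs. Expanding $X^2$ and averaging therefore gives $\Av X^2=\sum_\alpha a_\alpha^2$.

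For the fourth moment, I would argue by induction on $k$, iterating the one-family argument already used in Lemma~\ref{lem:sign-estimates}. The base case is the identity
$$
 \Av_\varepsilon\Bigl(\sum_i\varepsilon_ib_i\Bigr)^4=3\Bigl(\sum_ib_i^2\Bigr)^2-2\sum_ib_i^4\leq3\Bigl(\sum_ib_i^2\Bigr)^2,
$$
valid for real $b_i$. It holds because a term in the expansion has nonzero average only when every sign appears an even number of times.

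For the inductive step, I fix the families $\varepsilon^{(2)},\dots,\varepsilon^{(k)}$ and write $X=\sum_{\alpha_1}\varepsilon^{(1)}_{\alpha_1}b_{\alpha_1}$, where
$$
 b_{\alpha_1}=\sum_{\alpha_2,\dots,\alpha_k}a_{\alpha}\,\varepsilon^{(2)}_{\alpha_2}\cdots\varepsilon^{(k)}_{\alpha_k}
$$
is a real $(k-1)$-fold chaos. Averaging over $\varepsilon^{(1)}$ first gives $\Av X^4\leq3\,\Av\bigl(\sum_{\alpha_1}b_{\alpha_1}^2\bigr)^2$. Expanding the square and applying Cauchy--Schwarz termwise,
$$
 \Av\Bigl(\sum_{\alpha_1}b_{\alpha_1}^2\Bigr)^2=\sum_{\alpha_1,\alpha_1'}\Av\bigl(b_{\alpha_1}^2b_{\alpha_1'}^2\bigr)\leq\Bigl(\sum_{\alpha_1}(\Av b_{\alpha_1}^4)^{1/2}\Bigr)^2.
$$
The induction hypothesis applied to each $b_{\alpha_1}$ gives
$$
 (\Av b_{\alpha_1}^4)^{1/2}\leq3^{(k-1)/2}\sum_{\alpha_2,\dots,\alpha_k}a_\alpha^2.
$$
Summing over $\alpha_1$ and squaring yields $\Av X^4\leq3\cdot3^{k-1}(\sum_\alpha a_\alpha^2)^2=3^k(\sum_\alpha a_\alpha^2)^2$.

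There is no real obstacle. The only points needing care are that the coefficients $b_{\alpha_1}$ are real, which the base identity requires, and that the averages may be taken in any order because the families are independent finite sums. These are exactly the points already handled in Lemma~\ref{lem:sign-estimates}, which is the case $k=2$ and gives the constant $9$. Alternatively, one could cite Bonami's hypercontractivity \cite{Bonami}, but the elementary induction is self-contained.
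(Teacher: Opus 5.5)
Your proof is correct and follows the paper's own argument: orthogonality for the second moment, then averaging the first sign family with the one-family identity $\Av(\sum_i\varepsilon_ib_i)^4\leq3(\sum_ib_i^2)^2$, Cauchy--Schwarz on the cross terms, and induction on the remaining $k-1$ families. Your version simply writes out the constant bookkeeping $3\cdot3^{k-1}=3^k$, which the paper's sketch leaves implicit.
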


\begin{proof}
The identity for $\Av X^2$ follows from orthogonality. Write $X=\sum_i\varepsilon_i^{(1)}X_i$ and average the first
sign family to obtain $\Av X^4\leq3\sum_{i,i'}\Av(X_i^2X_{i'}^2)$.
Cauchy--Schwarz and induction on the other $k-1$ sign families
bound this by $3^k(\sum_\alpha a_\alpha^2)^2$. The base case is
the one-family identity used in Lemma~\ref{lem:sign-estimates}.
\end{proof}

By \eqref{eq:multi-local-lower}, $\Av X^2\geq c_0^2$. The proof of
Lemma~\ref{lem:sign-selection}, with $3$ replaced by $3^{k/2}$, gives
$$
 \Av\one_{\{|X|>\frac{c_0}{\sqrt2}\}}\geq\frac1{4\cdot3^k}.
$$
Integrating over $U$ selects one common choice of signs such that
$$
 \Big|\Big\{x:\B_{E_N}^{(k)}(f,g,h_3,\ldots,h_k)(x)>\frac{c_0}{\sqrt2}\Big\}\Big|
 \geq\frac{c_U}{4\cdot3^k}.
$$
In particular, all functions are nonzero.
Using \eqref{eq:multi-Hardy} and \eqref{eq:weak-quasinorm} with $p=1/k$
gives
$$
 \frac{\|\B_{E_N}^{(k)}(f,g,h_3,\ldots,h_k)\|_{L^{1/k,\infty}(\mathbb R^d)}}
 {\|f\|_{H^1(\mathbb R^d)}\|g\|_{H^1(\mathbb R^d)}\prod_{i=3}^k\|h_i\|_{H^1(\mathbb R^d)}}
 \geq\frac{c_0}{\sqrt2 C^k\theta^k}\Bigl(\frac{c_U} {4\cdot 3^k}\Bigr)^k
 =c_{d,k}\log(N+1),
$$
with $c_{d,k}=\frac{c_0}{\sqrt2}(c_U/(4\cdot  3^k))^k/(C^k\rho)$.
This proves
Theorem~\ref{rem:multilinear-endpoint} for $d\geq2$.

\subsection{The case \texorpdfstring{$d=1$}{d=1}}
\label{sec:multi-dimension-one}

The same proof applies when $d=1$, with the simplifications of
Section~\ref{sec:dimension-one}. The auxiliary cubes are intervals,
the cap $|\zeta-e_d|\leq\eta$ becomes $\{+1\}\subset\mathbb S^0$, and
$S_e$ is an interval of length comparable to $s$. There is no
horizontal variable, and the upper sphere is the point
$\Gamma=z_e+r_e$, with $H=r_e$. The bounds for
$\partial_{u_{i}}\Gamma$ in Lemma~\ref{lem:multi-separation} used only
$|z_e-u_i|\leq2\eta R_j$ and $H\geq R_j/(2\sqrt k)$, so they hold
without change. The containing region is the interval
$[c_Q+R_j/(2\sqrt k),\,c_Q+2R_j/\sqrt k]$, the integral in one endpoint
is bounded by $Cs$, and $\sum_{Q'}|Q'|^2=\mathcal L^{-1}R_j^2$. Since
$\mathcal L\geq N$, the exceptional terms are still bounded by
$Cc_j^2\theta^{2k}R_j$. Lemma~\ref{lem:J-positive} holds for every
$d\geq1$. This proves Theorem~\ref{rem:multilinear-endpoint}.

\section{Applications}
\label{sec:applications}

Our construction also gives the following consequences.  

\begin{remark}
In this
remark, we write $\A_R^{(2)}=\A_R$ and $\B_E^{(2)}=\B_E$.
First, let $d\geq1$, $k\geq2$, $a\in\mathbb Z$, and $h\in\mathbb N$.
For
$
 E_{a,h,N}=\{2^{a+hn}:0\leq n<N\},
$
there is a constant $c_{d,k}>0$, independent of $a,h,N$, such that
\[
 \sup_{f_1,\ldots,f_k\ne0}
 \frac{\|\B_{E_{a,h,N}}^{(k)}(f_1,\ldots,f_k)
       \|_{L^{1/k,\infty}(\mathbb R^d)}}
      {\prod_{i=1}^k\|f_i\|_{H^1(\mathbb R^d)}}
 \geq c_{d,k}\log(N+1).
\]
To see this, take $M=\lfloor\sqrt N\rfloor$ and
$
 m=\left\lceil\log_2\max\{C_{d,k},M^{1/d}\}\right\rceil,
$
where $C_{d,k}$ is large enough for the geometric restrictions in the
construction. Then $\mathcal L=2^{hm}$ satisfies its requirements with
$M$ in place of $N$. For sufficiently large $N$,
$m(M-1)\leq N-1$. After a dyadic dilation, the $M$ radii used in the
construction belong to $E_{a,h,N}$. The result follows since
$\log(M+1)\gtrsim\log(N+1)$. For the remaining values of $N$, a fixed
nonzero smooth mean-zero test function gives a positive lower bound at
one radius. Dyadic dilation makes this bound independent of $a$; after
decreasing $c_{d,k}$, it also gives the displayed estimate.

Second, there are fixed functions
$f_1,\ldots,f_k\in C^\infty(\mathbb R^d)\cap H^1(\mathbb R^d)$ such
that, for every $\lambda>0$,
\[
 \bigl|\{x:\B_{\mathrm{lac}}^{(k)}(f_1,\ldots,f_k)(x)
       >\lambda\}\bigr|=\infty.
\]
Indeed, the proof gives functions $f_{i,N}$ supported in a fixed
bounded set, with $\|f_{i,N}\|_{H^1}\lesssim\theta_N$, and a set of
measure bounded below independently of $N$ on which
$\B_{E_N}^{(k)}(f_{1,N},\ldots,f_{k,N})$ has a fixed positive lower
bound. These sets also lie in a fixed bounded set, and
$E_N\subset(0,1]$. Choose $N_n$ so that
$2^{-kn}\theta_{N_n}^{-k}\to\infty$, and set
\[
 f_i(x)=\sum_{n=1}^{\infty}
  2^{-n}\theta_{N_n}^{-1}f_{i,N_n}(x-z_n),
\]
where the points $z_n$ are chosen so that the translated lower-bound
sets are disjoint and each has distance greater than $1$ from the
supports of the other summands. The sums converge in $H^1$ and are
locally finite, so each $f_i$ is smooth. On the $n$th translated set,
the other summands do not contribute at the radii in $E_{N_n}$. The
maximal function is therefore bounded below there by a positive
constant times $2^{-kn}\theta_{N_n}^{-k}$. Since each set has measure
bounded below independently of $n$, the asserted conclusion follows.

Third, the $L^1$ weak endpoint also fails for nonnegative smooth
compactly supported functions. In fact, the positivity of the
spherical measure gives
\[
 |\A_R^{(k)}(f_1,\ldots,f_k)|
 \leq \A_R^{(k)}(|f_1|,\ldots,|f_k|).
\]
Choose a nonnegative $\psi\in C_c^\infty(\mathbb R^d)$ with
$\psi\geq|\phi|$. Replace $\phi$ by $\psi$ in each atom block and
sum with the absolute values of the coefficients. The resulting
nonnegative smooth compactly supported functions dominate the
absolute values of the original functions and have $L^1$ norms
$O(\theta)$. Thus the same logarithmic lower bound holds for them.

The same construction gives failure of the weak Hardy endpoint for
the dyadic $q$-variation, for every $q>0$. We define
\[
 V^q(\{a_\ell\}_{\ell\in\mathbb Z})
 =\sup_{\ell_0<\cdots<\ell_m}
 \bigg(\sum_{j=1}^m|a_{\ell_j}-a_{\ell_{j-1}}|^q\bigg)^{1/q},
\]
where the supremum is over all finite increasing sequences of integers.
For compactly supported functions,
$\A_{2^\ell}^{(k)}(f_1,\ldots,f_k)(x)=0$ for all sufficiently large
$\ell$, depending on $x$. Hence
$
 V^q\bigl(\{\A_{2^\ell}^{(k)}(f_1,\ldots,f_k)\}_{\ell\in\mathbb Z}\bigr)
 \geq \B_{\mathrm{lac}}^{(k)}(f_1,\ldots,f_k).
$
For the finite-scale lower bound, add to $E_N$ a dyadic radius large
enough that its average vanishes on the fixed bounded lower-bound set.

Finally, the counterexamples can be placed in any nonempty open set
and can use arbitrarily small dyadic radii. This follows by a dyadic
dilation and translation. Also, for any fixed integer $M\geq1$, we
may replace the function $\phi$ by
$
 \phi_M=(-\Delta)^M(\chi*\widetilde\chi).
$
Then
\[
 \widehat{\phi_M}(\xi)
 =(4\pi^2|\xi|^2)^M|\widehat\chi(\xi)|^2\geq0,
 \qquad
 \int x^\alpha\phi_M(x)\,dx=0
 \quad\text{for }|\alpha|<2M.
\]
In the proof of Lemma~\ref{lem:J-positive}, the Fourier integral at
$\epsilon=h=0$ is positive because its integrand is nonnegative and
is bounded below by a positive constant times $|t|^{2Mk}$ near the
origin. The perturbation estimates apply to $\phi_M$ as well. Choose
$h_0$, $\epsilon_0$, and $\rho$ accordingly, allowing them to depend
on $M$. The rest of the construction is unchanged. Thus the atoms may
be chosen to have any prescribed finite number of vanishing moments.
\end{remark}

\begin{remark}
The construction also applies to some non-spherical hypersurfaces
and to spherical averages on the Heisenberg group.

First, there are smooth, compact, strictly convex hypersurfaces
$\Sigma\subset\mathbb R^{2d}$, which are not spheres, for which the
associated lacunary bilinear maximal operator fails to map
$H^1(\mathbb R^d)\times H^1(\mathbb R^d)$ into
$L^{1/2,\infty}(\mathbb R^d)$. For example, $\Sigma$ may agree
with $\mathbb S^{2d-1}$ near the spherical cap used in the
construction and be a small smooth perturbation of the sphere
elsewhere. The local lower bounds remain valid on that cap.
For related bounds for lacunary maximal operators over hypersurfaces,
see \cite{ChoLeeShuin}.

Second, let $\mathbb H^n=\mathbb C^n\times\mathbb R$, and let
$\mathfrak S_R$ be the bilinear Nevo--Thangavelu spherical average
studied by Ghosh and Singh \cite{GhoshSingh}. The sphere in this
average lies in the horizontal variables. The corresponding
lacunary maximal operator does not map
\[
 H^1(\mathbb H^n)\times H^1(\mathbb H^n)
 \longrightarrow L^{1/2,\infty}(\mathbb H^n),
 \qquad n\geq1.
\]
To obtain this example, we lift each Euclidean atom to
$\mathbb H^n$ using a cutoff in the central variable adapted to
the center of the atom. The lifted function is a sum of
$H^1(\mathbb H^n)$ atoms with uniformly bounded coefficient sum.
On a suitable set, its spherical averages agree with the Euclidean
averages used in the construction.

\end{remark}

\bigskip
\bigskip

\noindent \textbf{Acknowledgements.} S. Lee is supported by the National Research Foundation of Korea (NRF) through Grant No. RS-2024-00342160.  J. Li is supported by ARC DP260100485. Liangchuan Wu is supported by NNSF of China \#12201002.

\bigskip
\noindent (S. Lee) Department of Mathematical Sciences and RIM,
Seoul National University, Seoul 08826, Republic of Korea\par
\noindent\textit{E-mail}: \texttt{shklee@snu.ac.kr}

\medskip
\noindent (J. Li) School of Mathematical and Physical Sciences,
Macquarie University, NSW, 2109, Australia\par
\noindent\textit{E-mail}: \texttt{ji.li@mq.edu.au}

\medskip
\noindent (C. Wen) Department of Mathematics, Sun Yat-sen University,
Guangzhou, 510275, China\par
\noindent\textit{E-mail}: \texttt{wenchj@mail2.sysu.edu.cn}

\medskip
\noindent (L. Wu) School of Mathematical Sciences, Anhui University,
Hefei, 230601, P.R.~China\par
\noindent\textit{E-mail}: \texttt{wuliangchuan@ahu.edu.cn}

\end{document}